\newtheorem{theorem}{Theorem}
\newtheorem{lemma}{Lemma}
\newtheorem{proposition}{Proposition}
\newtheorem{definition}{Definition}
\newtheorem{corollary}{Corollary}
\newtheorem{remark}{Remark}
\newcommand{\be}{\begin{equation}}
\newcommand{\ee}{\end{equation}}
\newcommand{\beq}{\begin{eqnarray}}
\newcommand{\eeq}{\end{eqnarray}}
\newcommand{\ced}{\end{proof}}
\begin{document}

\begin{frontmatter}

\title{Neumann Boundary Problem for Parabolic Partial Differential Equations with Divergence Terms}
\date{}
\runtitle{}
\author{\fnms{Xue}
 \snm{YANG}\corref{}\ead[label=e1]{xyang2013@tju.edu.cn}}
\address{Tianjin University
\\\printead{e1}}
\author{\fnms{Jing}
 \snm{ZHANG}\corref{}\ead[label=e2]{zhang\_jing@fudan.edu.cn}}
\address{Fudan University
\\\printead{e2}}

\runauthor{X. Yang and J. Zhang}

\begin{abstract}
We prove an existence and uniqueness result for Neumann boundary problem of a parabolic partial differential equation (PDE for short)  with a singular nonlinear divergence term which can only be understood in a weak sense. A probabilistic approach is applied by studying the backward stochastic differential equations (BSDEs for short) corresponding to the PDEs, the solution of which turns out to be a limit of a sequence of BSDEs constructed by penalization method.
\end{abstract}

\begin{keyword}
\kwd{ stochastic partial differential equations, penalization method, It\^o's formula, backward stochastic differential equations, martingale decomposition, reflecting diffusions, probabilistic representation }
\end{keyword}
\begin{keyword}[class=AMS]
\kwd[Primary ]{60H15; 35R60; 31B150}
\end{keyword}

\end{frontmatter}

\section{Introduction}

 We consider the following partial differential equation
\begin{equation}\label{PDE}
\left\{ \begin{split}
&\partial_t u(t,x)+ \frac{1}{2}\Delta u(t,x)+ \langle b, \nabla u\rangle-divg(t,x,u,\nabla u)+f(t,x,u,\nabla u)=0, (t,x)\in [0,T]\times D\\
&u(T,x)=\Phi(x), \ x\in D,\\
&\frac{\partial u}{\partial\vec{n}}(t,x)-2\langle g(t,x,u,\nabla u), \vec{n}\rangle+h(t,x,u)=0, \ (t,x)\in \ [0,T]\times \partial D,
\end{split}
\right.
\end{equation}
where $D$ is a smooth bounded domain in $\mathbb{R}^N$ endowed with the inner product $\langle,\rangle$.  $\vec n$ is the unit inward normal vector field of $D$ on the boundary $\partial D$. $f$, $g$ and $h$ are nonlinear measurable functions. $b$ is a Lipschitz continuous  $\mathbb{R}^N$-valued function.

\vspace{2mm}
This article is devoted to solving the nonlinear PDE with Neumann boundary condition by studying the BSDE corresponding to the PDE, for which the underlying process is a reflecting diffusion in domain $D$. A singular term $''div g''$ involved in the equation will be understood as a distribution, and a classic weak solution is considered in this paper.

\vspace{2mm}
The theory of nonlinear BSDEs was firstly introduced by Pardoux and Peng (\cite{PP 90}) who gave a probabilistic formula, known as generalized Feymann-Kac formula, for solving nonlinear PDEs (\cite{P 96}). Subsequently, BSDEs as useful tools in solving nonlinear problems were further studied by Pardox and Peng (\cite{PP 92},\cite{PP}), El Karoui (\cite{Karoui}) et al.. Elliptic PDEs defined on a domain with Dirichlet and Neumann boundary conditions were studied by Darling, Pardoux (\cite{DP97}) and Hu (\cite{H93}) respectively. In \cite{H93}, the boundary condition was homogeneous, and the nonlinear case was studied by Pardoux and Zhang in \cite{PZ} in which a new class of BSDEs involving an integral with respect to a continuous increasing process was studied. Pardoux and Zhang's work is one of the motivations of our present paper. We also want to mention the work of Boufoussi and Casteren (\cite{BC04}). In \cite{BC04}, they provided an approximation result of the solution of semilinear PDEs with nonlinear Neumann boundary conditions via BSDEs, and the convergence happened in S-topology (\cite{JAK97}). But both of the two works (\cite{PZ}, \cite{BC04}) focused on the viscosity solutions of the corresponding PDEs while we are interested in obtaining the weak solutions for the PDEs.

\vspace{2mm}
Not only to the viscosity solutions, BSDEs were applied to the weak solutions of PDEs, under additional regularity assumptions, by Barles and Lesigne (\cite{BL97}), Lejay (\cite{L02a}, \cite{L04}), Stoica(\cite{S}), Rozkosz(\cite{Roz03}) et al.. The notion of weak
solutions provides a natural framework for BSDEs, and the Sobolev space in which  weak solutions live or converge can be treated as a Dirichlet space, so that the decomposition and stochastic calculus can be used in the framework of Dirichlet forms (\cite{FOT}).

\vspace{2mm}
In this article, we deal with the reflecting diffusion in domain $D$ as underlying process, which can be approximated by a sequence of penalized diffusions (\cite{LS}). According to this penalization method, we construct a sequence of penalized PDEs which are not restricted by any boundary conditions but still involve the divergence terms.  Thanks to \cite{DS04}, the existence of weak solutions for these PDEs has been proved, but it is not easy to obtain the convergence of this sequence of solutions in the Sobolev space by analytic method.  According to this observation, the BSDEs involving forward-backward martingale integration (\cite{S}) connecting to the penalized PDEs are considered.   This approximation result of Neumann boundary problem with probabilistic approach is also a contribution of this article.

\vspace{2mm}
 Dealing with this singular term is a difficult point in our study, which is actually substituted by a function in Dirichlet space in our paper, so that the Fukushima decomposition can be applied.  This transformation supplies an equivalent PDE without the divergence term so that the penalization method we mentioned before can be applied.   The convergence of BSDEs connecting to be penalized PDEs  gives us a candidate solution for the PDE with Neumann boundary conditions. By the theory of Dirichlet form, we find that the candidate is a mild solution, and prove that this mild solution is also a weak solution.

\vspace{2mm}
In this paper, the Neumann boundary problem with nonlinear coefficients is proved by two steps. We firstly solve the linear PDE by penalization method. Based on this linear result, the nonlinear case is solved by Picard iteration. We use both analytic and probabilistic methods independently to calculate this approximation. 

\vspace{2mm}
The paper is organized as follows. In Section 2, we recall the decomposition of the reflecting diffusions, the penalization approximation, and some estimate results. Section 3 gives the probabilistic interpretation of the divergence term when the underlying process is a reflecting diffusion. Section 4 is devoted to studying the BSDEs containing the integration w.r.t. local time and forward-backward martingale integration, which are associated with the PDEs with Neumann boundary conditions. In Section 5, we prove the sequence of BSDEs associated with penalized PDEs is convergent and solve the linear PDE. Nonlinear Neumann problem is finally solved in Section 6.

\section{Preliminaries}

\subsection{Notations}

The domain $D\subset \mathbb{R}^N$ is bounded with smooth boundary and we assume there is a smooth function $\psi$ such that
$$
D=\{x\in \mathbb{R}^N| \psi(x)>0\}\quad\mbox{and}\quad \partial D=\{x\in \mathbb{R}^N|\psi(x)=0\}.
$$
On $\partial D$, $\vec{n}:=\nabla \psi$ coincides with the unit vector pointing inward the interior of $D$.
Set function $d(x):=d(x,\bar{D})^2$ in a neighborhood of $\bar{D}$, then $d(x)=0$ if $x\in\bar{D}$ and $d(x)>0$ otherwise. The penalization term $\vec\delta(x):=\nabla d(x)$ satisfies $\langle \nabla \psi (x), \vec\delta(x)\rangle \leq 0$, for all $x\in \mathbb{R}^N$.  Let $dx$ denote the $N-$dimensional Lebesgue measure on $\mathbb{R}^N$ and $d\sigma(x)$ the $(N-1)$-dimensional Lebesgue measure on $\partial D$.
\vspace{2mm}

$L^2(D)$ is the space of square integrable functions on $D$ with the inner product and norm as follows

$$(f,g):=\int_D f(x)g(x)dx\quad\mbox{and}\quad \|f\|^2:=(f,f).$$

For two vector valued functions $q=(q_1,\cdots,q_n)$ and $p=(p_1,\cdots,p_n)$, where $q_i, p_i\in L^2(D), i=1,...,n$,
we also use the notation $(p,q):=\int_D \sum_{i=1}^{n}q_i(x)p_i(x)dx$ for simplicity.

Let $(\mathcal{F},\mathcal{E})$ be the Dirichlet form on $L^2(D)$ associated with the operator $L_0=\frac{1}{2}\Delta$ with null Neuman boundary condition defined as
$$
\mathcal{E}(u,u)=\frac{1}{2}(\nabla u, \nabla u  ),\quad  u\in \mathcal{F},
$$
where $\mathcal{F}$ is the closure of $C^\infty(D)$ under the norm $\|\cdot\|^2_{\mathcal{E}_1}:=\mathcal{E}(\cdot,\cdot)+(\cdot,\cdot)$.  Then $\mathcal{F}$ is a Hilbert space with the norm $\|\cdot\|_{\mathcal{E}_1}$. It is well known that $\mathcal{F}=H^1(D)$ is the first order Sobolev space.

Let $L^2(\partial D)$ be the space of square integral functions on $\partial D$ with respect to Lebesgue measure $d\sigma(x)$. We denote the trace operator $Tr:H^1(D)\rightarrow L^2(\partial D)$ with the norm $\|Tr\|$.

\vspace{2mm}
Suppose the measurable functions
$$f: \mathbb{R}^{+}\times\mathbb{R}^N\times \mathbb{R}\times \mathbb{R}^N\rightarrow \mathbb{R}\quad\mbox{and}\quad h: \mathbb{R}^{+}\times\mathbb{R}^N\times \mathbb{R}\rightarrow \mathbb{R}$$
satisfy the following conditions: there exist positive constants $\alpha,\beta, K, C$, for any $x,x'\in D$, $y,y'\in \mathbb{R}$, $z,z'\in \mathbb{R}^N$,
\vspace{3mm}

\textbf{(H1)} 
$$(y-y')(f(t,x,y,z)-f(t,x,y',z))\leq \alpha|y-y'|^2,$$
$$(y-y')(h(t,x,y)-h(t,x,y')) \leq -\beta|y-y'|^2.$$\\
\textbf{(H2)} 
$|f(t,x,y,z)|\leq K(1+|y|+|z|)  \quad\mbox{and}\quad |h(t,x,y)|\leq  K.$\\
\textbf{(H3)} $y\rightarrow (f(t,x,y,z), h(t,x,y))$ is continuous for all $(t,x,z)$ $ a.e.$.\\
\textbf{(H4)} $|f(t,x, y,z)-{f}(t,x',y',z')|\leq C|x-x'|+\alpha(|y-y'|+|z-z'|).$\\
\textbf{(H5)} $|h(t,x, y)-{h}(t,x',y')|\leq C|x-x'|+\beta|y-y'|.$ 

\vspace{2mm}
Suppose the measurable vector valued function
$$
g=(g_1,\cdots,g_N): \mathbb{R}^{+}\times D \times \mathbb{R}\times \mathbb{R}^N\rightarrow \mathbb{R}^N,
$$
satisfies the Lipschitz condition: there exists a positive constant $\gamma$, for any $y,y'\in \mathbb{R}$, $z,z'\in \mathbb{R}^N$,

\textbf{(H6)} $|g_i(t,x, y,z)-g_i(t,x,y',z')|\leq \gamma (|y-y'|+|z-z'|), \ \mbox{for}\ i=1,\cdots,N.$

\vspace{3mm}
We also assume the following integrability conditions hold 
\begin{equation}\label{integrability condition}
\int_{0}^{T}\int_{D}|f(t,x,0,0)|^2+\sum_{i=1}^N |g_i(t,x,0,0)|^2 dxdt+\int_0^T\int_{\partial D}|h(t,x,0)|^2 d\sigma(x)dt< +\infty.
\end{equation}

When variables $(x,y,z)$ do need to be specified,  we use $g_t, f_t, h_t$ to denote the coefficients  sometimes for simplicity in the following discussion. 

\begin{definition}\label{defweaksol}
A function $u\in L^2([0,T]; H^{1}(D))$ is said to be a weak solution of PDE \eqref{PDE}
if  for any test function $\phi\in \mathcal{C}^\infty(\mathbb{R}^+)\otimes \mathcal{C}^\infty(D)$,
\begin{equation}\label{weaksol}
\begin{split}
(u_T,\phi_T)-(u_0,\phi_0)&-\int_0^T(u_t,\partial_t\phi_t)dt=\int_{0}^{T}\mathcal{E}(u_t,\phi_t)dt-\int_{0}^{T}\int_D\langle b,\nabla u_t\rangle(x)\phi_t(x)dxdt\\
& \ \ \ \ \ \ \ \ \ \ \ \ \ -\int_{0}^{T} (g_t,\nabla\phi_t)dt-\int_{0}^{T}(f_t,\phi_t)dt+\int_{0}^{T}\int_{\partial D} h_t\phi_td\sigma dt.
\end{split}
\end{equation}
\end{definition}

The following analytic result will be used in the later discussion (see Chapter 8 in \cite{GT}).

\begin{proposition}
\label{regularity G}
For $g \in L^q(\mathcal{O})$, where $\mathcal{O}\subset \mathbb{R}^N$ is bounded and $q>N$,  there exists a unique weak solution $G\in H^1_0(\mathcal{O})$ for the following equation 
\begin{eqnarray*}
\Delta G-G=divg.
\end{eqnarray*}
Furthermore, $G$ is uniformly bounded, i.e. $\sup\limits_{\mathcal{O}}|G|\leq C\|g\|_{L^q}$, where $C=C(N,q,|\mathcal{O}|)$.\\
If we suppose $g\in L^\infty(\mathcal{O})$ and $\mathcal{O}$ is a $C^{1,1}-$domain, then $G\in C^{1,1}(\bar{\mathcal{O}})$, i.e. there exists a constant $C>0$, for any $x,x'\in \bar{\mathcal{O}}$, $|G(x)-G(x')|\leq C |x-x'|$.
\end{proposition}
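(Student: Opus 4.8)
The plan is to establish the three assertions in turn: existence and uniqueness in $H^1_0(\mathcal{O})$ by the Lax--Milgram theorem, the sup-norm bound by a De Giorgi--Stampacchia truncation argument, and the final Lipschitz conclusion by elliptic regularity up to the boundary. The weak form of $\Delta G-G=\mathrm{div}\,g$ reads: $G\in H^1_0(\mathcal{O})$ is a weak solution iff
\begin{equation*}
\int_{\mathcal{O}}\nabla G\cdot\nabla v\,dx+\int_{\mathcal{O}}Gv\,dx=\int_{\mathcal{O}}g\cdot\nabla v\,dx\qquad\text{for all }v\in H^1_0(\mathcal{O}).
\end{equation*}
The bilinear form $B(u,v)=\int_{\mathcal{O}}(\nabla u\cdot\nabla v+uv)\,dx$ is the squared $H^1$-inner product, hence bounded and coercive with constant $1$ on $H^1_0(\mathcal{O})$. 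Since $\mathcal{O}$ is bounded and $q>N$, we have $g\in L^2(\mathcal{O})$, so $v\mapsto\int_{\mathcal{O}}g\cdot\nabla v\,dx$ is a bounded linear functional on $H^1_0(\mathcal{O})$ of norm at most $\|g\|_{L^2}$. Lax--Milgram then yields a unique $G$.

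The sup-norm estimate is the quantitative heart of the statement, and I expect the hypothesis $q>N$ to play its decisive role here. For $k\ge 0$ set $A_k=\{G>k\}$ and test the weak form against $(G-k)^+\in H^1_0(\mathcal{O})$. The zeroth order contribution $\int_{A_k}G(G-k)^+\,dx$ is nonnegative and may be discarded, leaving $\int_{A_k}|\nabla G|^2\,dx\le\int_{A_k}|g|\,|\nabla G|\,dx$, whence $\|\nabla G\|_{L^2(A_k)}\le\|g\|_{L^2(A_k)}\le\|g\|_{L^q}|A_k|^{1/2-1/q}$ by H\"older. Combining the Sobolev inequality applied to $(G-k)^+$ with the elementary bound $(h-k)|A_h|^{1/2^*}\le\|(G-k)^+\|_{L^{2^*}}$ valid for $h>k$ gives
\begin{equation*}
|A_h|\le\Big(\frac{C\,\|g\|_{L^q}}{h-k}\Big)^{2^*}|A_k|^{\gamma},\qquad \gamma=2^*\Big(\tfrac12-\tfrac1q\Big)=\frac{N(q-2)}{q(N-2)},
\end{equation*}
where $2^*$ is the Sobolev conjugate exponent (any large finite exponent when $N\le2$). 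A direct computation shows $\gamma>1$ is \emph{equivalent} to $q>N$, so Stampacchia's iteration lemma applies and forces $|A_{k_0}|=0$ with $k_0=C\|g\|_{L^q}$ and $C=C(N,q,|\mathcal{O}|)$. Thus the essential supremum of $G$ is bounded by $C\|g\|_{L^q}$, and replacing $(G,g)$ by $(-G,-g)$ gives the matching lower bound, so $\sup_{\mathcal{O}}|G|\le C\|g\|_{L^q}$.

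For the final assertion I would use $g\in L^\infty(\mathcal{O})\subset L^q(\mathcal{O})$ for every finite $q$, together with the sup-norm bound just proved, and rewrite the equation as $\Delta G=G+\mathrm{div}\,g$. Splitting $G=G_1+G_2$ with null boundary data, where $\Delta G_1=G$ and $\Delta G_2=\mathrm{div}\,g$: since $G\in L^\infty\subset L^p$ and $\partial\mathcal{O}$ is $C^{1,1}$, the global Calder\'on--Zygmund estimate places $G_1\in W^{2,p}(\mathcal{O})$ for all $p<\infty$, hence $G_1\in C^{1,\alpha}(\bar{\mathcal{O}})$ by Sobolev embedding, in particular Lipschitz. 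For $G_2$, the $L^p$ bound for $\nabla\Delta^{-1}\mathrm{div}$ gives $\nabla G_2\in L^p(\mathcal{O})$ for every $p<\infty$, so Morrey's embedding yields $G_2\in C^{0,1-N/p}(\bar{\mathcal{O}})$ with exponent $\uparrow1$ as $p\uparrow\infty$.

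I expect the \textbf{main obstacle} to be precisely this last endpoint $p=\infty$. Because $\nabla\Delta^{-1}\mathrm{div}$ is a Calder\'on--Zygmund type operator that maps $L^\infty$ only into $BMO$ rather than into $L^\infty$, the family of H\"older estimates for $G_2$ does not by itself produce a \emph{uniform} Lipschitz bound, and for genuinely bounded second derivatives one would need $g$ to be H\"older. The stated Lipschitz conclusion must therefore be extracted carefully from the global elliptic estimates of Chapter~8 of \cite{GT}, using the $C^{1,1}$ regularity of $\partial\mathcal{O}$ to flatten the boundary and control the solution up to $\partial\mathcal{O}$; this is the delicate point where the $L^\infty$ hypothesis on $g$ alone is being pushed to its limit.
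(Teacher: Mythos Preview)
The paper does not supply a proof of this proposition at all: it simply records the statement with a pointer to Chapter~8 of Gilbarg--Trudinger. Your Lax--Milgram argument for existence/uniqueness and your Stampacchia truncation argument for the sup bound are exactly the standard proofs one finds there (the latter is essentially Theorem~8.16 in \cite{GT}), so on those two points your write-up is correct and in line with the intended reference.

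Your hesitation about the third assertion is well placed, and in fact it is more than an ``obstacle'': the Lipschitz conclusion as stated is \emph{false} for merely bounded $g$. The operator $\nabla\Delta^{-1}\mathrm{div}$ is a matrix of second-order Riesz transforms, and these map $L^\infty$ into $BMO$ but not into $L^\infty$; the classical counterexample is, in two dimensions, $g(x)=(\mathrm{sgn}(x_1x_2),0)$ cut off smoothly away from the origin, for which $\nabla G$ has a logarithmic blow-up at $0$. So the endpoint you flagged cannot be reached from $g\in L^\infty$ alone, and no amount of boundary flattening salvages it. What your argument \emph{does} give---and what one legitimately extracts from \cite{GT}, Theorem~8.24 and the Morrey embedding---is $G\in C^{0,\alpha}(\bar{\mathcal O})$ for every $\alpha<1$. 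If you look at how the paper actually uses this proposition (Remark~\ref{regularity of G}, Lemma~\ref{lamma. regularity of G}, and the proof of Corollary~\ref{cauchy process}), only H\"older continuity of $G$ is ever invoked, never a genuine Lipschitz bound; the ``$C^{1,1}$'' in the proposition is evidently a misstatement, and your analysis correctly isolates why.
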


\begin{remark}
\label{regularity of G}
Actually, given $g\in L^\infty (D)$,  we can find a bounded domain $\mathcal{O}$ with smooth boundary, such that $D\subset\subset \mathcal{O}$ and extend $g$ on $\mathcal{O}$ such that $g\in L^\infty(\mathcal{O})$. Therefore, there exists a H\"{o}lder continuous function $G\in H^1_0(\mathcal{O})$, for any test function  $\phi\in C_0^\infty(\mathcal{O})$,
\begin{eqnarray*}
\int_\mathcal{O} \langle g,\nabla\phi\rangle(x)dx=\int_\mathcal{O} \langle \nabla G,\nabla\phi\rangle(x)+ G(x)\phi(x)dx.
\end{eqnarray*}
By the uniqueness of Reisz representation theorem, we find $G\in H^1(D)$ restricted on $D$  such that $div (\nabla G)-G=div(g)$ in weak sense and $G\in C^{1,1}(D)$.
\end{remark}

\subsection{Approximation of a reflected diffusion process}
$\{B_t, t\in [0,T]\}$ is a $N-$dimensional Brownian motion on a probability space $(\Omega, \mathcal{F}, P)$. For $t\in[0,T]$, $\mathcal{F}_t$ is the $\sigma-$field 
$\sigma(B_s, s\leq t)$ augmented with the $P-$null sets of $\mathcal{F}$.

Let $b:\mathbb{R}^N\rightarrow \mathbb{R}^N$  be uniformly bounded and satisfy the Lipschitz condition, i.e. there exists a constant $C_0>0$, such that $\forall x,x'\in \mathbb{R}^N$,
$$|b(x)-b(x')|\leq C_0 |x-x'|.$$

For $n\in\mathbb{N}^*$, the diffusion process $\{X^n_t, t\in[0,T]\}$ taking values in $\mathbb{R}^N$ satisfies the following equation
\begin{equation}\label{appro.diffusion}
\left\{\begin{split}
&dX^n_t=dB_t+b(X^n_t)dt+(-n\vec\delta(X^n_t))dt,\\
&X^n_0=x\in \bar{D}.
\end{split}\right.
\end{equation}

It is well known that (see \cite{LS}), when $n$ tends to $+\infty$, $\{X^n_t, t\in[0,T]\}$ converges to the reflected diffusion $\{X_t, t\in[0,T]\}$ with the local time $\{L_t,t\in[0,T]\}$, i.e.
\begin{equation}
\label{reflecting diffusion}
\left\{\begin{split}
&dX_t=dB_t+b(X_t)dt+\vec n(X_s)dL_s,\\
&X_0=x\in\bar{D}, \quad L_t=\int_{0}^t I_{\{X_s\in \partial D\}}ds.
\end{split}\right.
\end{equation}
The following propositions will be used later. One can refer to Proposition 3.1 and 3.2 in \cite{PZ}.
\begin{proposition}
\label{appro.diffusion estimate}
(1) For every $p\in[1,\infty)$, $\lim\limits_{n\rightarrow \infty}\sup\limits_{x\in \bar{D}}E^x[\sup\limits_{s\in[0,T]}|X^n_s-X_s|^p]=0$.

(2) Set $K^n_t=-\int_0^t n\vec\delta(X^n_s)ds $, $K_t=\int_0^t\vec n(X_s)dL_s$, then
\begin{eqnarray*}
\lim\limits_{n\rightarrow\infty}E\left[\sup\limits_{t\in[0,T]}|K^n_t-K_t|^p\right]=0,\quad \forall p\in[1,\infty)
\end{eqnarray*}
and $\forall\phi\in \mathcal{C}^1_b(\mathbb{R}^N)$,
\begin{eqnarray*}
\lim\limits_{n\rightarrow\infty}E\left[\sup\limits_{t\in[0,T]}\left|\int_0^t \phi(X^n_s)dK^n_s-\int_0^t \phi(X_s)dK_s\right|\right]=0.
\end{eqnarray*}
\end{proposition}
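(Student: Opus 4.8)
The plan is to establish the uniform convergence in part (1) and then read off part (2) from the structure of equations \eqref{appro.diffusion} and \eqref{reflecting diffusion}. The geometric facts I would rely on throughout are those built into the definition of $d(x)=\mathrm{dist}(x,\bar D)^2$: one has $\vec\delta(x)=\nabla d(x)=2(x-\pi(x))$ with $\pi$ the nearest-point projection onto $\bar D$, so $|\vec\delta(x)|^2=4d(x)$, $\vec\delta$ vanishes on $\bar D$, and $\langle y-x,\vec\delta(x)\rangle\le 0$ for every $y\in\bar D$ (moving toward $\bar D$ decreases the squared distance), up to a curvature error controlled by the smoothness of $\partial D$. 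The stated sign condition $\langle\nabla\psi,\vec\delta\rangle\le 0$ will enter when handling terms carried by the boundary normal $\vec n=\nabla\psi$.

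First I would record a priori bounds uniform in $n$ and in $x\in\bar D$. Itô's formula applied to $|X^n_t|^p$, together with the boundedness of $b$ and the fact that the penalization pushes toward $\bar D$, gives $\sup_n\sup_{x\in\bar D}E^x[\sup_{t\le T}|X^n_t|^p]<\infty$. Next I would apply Itô to $d(X^n_t)$; since $\langle\nabla d,-n\vec\delta\rangle=-n|\vec\delta|^2=-4n\,d(X^n_s)$, the penalization contributes the dissipative drift $-4n\int_0^t d(X^n_s)\,ds$, and after absorbing the $\langle\nabla d,b\rangle$ term by Young's inequality and bounding $\Delta d$ one obtains $n\,E[\int_0^T d(X^n_s)\,ds]\le C$, hence $E[\int_0^T d(X^n_s)\,ds]\le C/n\to 0$. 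This is the confinement estimate expressing that $X^n$ concentrates on $\bar D$ as $n\to\infty$.

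The core is the comparison of $X^n$ with the reflected diffusion $X$. Since both are driven by the same Brownian motion, $Y_t:=X^n_t-X_t$ has finite variation and the chain rule gives
\[
|Y_t|^2=\int_0^t 2\langle Y_s,b(X^n_s)-b(X_s)\rangle\,ds-\int_0^t 2n\langle Y_s,\vec\delta(X^n_s)\rangle\,ds-\int_0^t 2\langle Y_s,\vec n(X_s)\rangle\,dL_s.
\]
The first integrand is at most $2C_0|Y_s|^2$. The penalization integrand is $-2n\langle X^n_s-X_s,\vec\delta(X^n_s)\rangle=2n\langle X_s-X^n_s,\vec\delta(X^n_s)\rangle$, which is nonpositive up to a curvature error because $X_s\in\bar D$. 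The delicate term is the local-time integral: on the support of $dL$ one has $X_s\in\partial D$, and splitting $X^n_s-X_s$ through $\pi(X^n_s)\in\bar D$ I would bound the interior part by $C|Y_s|^2$ via the smoothness of $\psi$ and the outward-excursion part by $2\sqrt{d(X^n_s)}$. The main obstacle is precisely controlling $E[\int_0^T\sqrt{d(X^n_s)}\,dL_s]$: the confinement estimate gives smallness only in time-integral against $ds$, not against the singular measure $dL$, so converting it requires the finer properties of the reflected diffusion and the smoothness of $\partial D$, exactly as in the references \cite{LS}, \cite{PZ}. Granting this smallness, a Gronwall inequality with respect to $ds+dL_s$—legitimate because $L_T$ has exponential moments for a reflected diffusion in a bounded smooth domain—yields $E[\sup_{t\le T}|Y_t|^2]\to 0$ uniformly in $x\in\bar D$; combined with the uniform moment bounds of the first step this upgrades to every $p\in[1,\infty)$, proving part (1).

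Part (2) then follows with little extra work. Rearranging \eqref{appro.diffusion} and \eqref{reflecting diffusion} gives $K^n_t=X^n_t-x-B_t-\int_0^t b(X^n_s)\,ds$ and $K_t=X_t-x-B_t-\int_0^t b(X_s)\,ds$, whence $K^n_t-K_t=Y_t-\int_0^t(b(X^n_s)-b(X_s))\,ds$ and $\sup_{t\le T}|K^n_t-K_t|\le(1+C_0T)\sup_{t\le T}|Y_t|$, so the first limit is immediate from part (1). For the integrals I would split
\[
\int_0^t\phi(X^n_s)\,dK^n_s-\int_0^t\phi(X_s)\,dK_s=\int_0^t(\phi(X^n_s)-\phi(X_s))\,dK_s+\int_0^t\phi(X^n_s)\,d(K^n_s-K_s).
\]
The first term is bounded by $\|\nabla\phi\|_\infty\sup_s|Y_s|\,L_T$ because $\phi\in\mathcal C^1_b$, and its expectation vanishes by Cauchy–Schwarz and part (1). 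The second I would treat by integration by parts, $\int_0^t\phi(X^n_s)\,d(K^n_s-K_s)=\phi(X^n_t)(K^n_t-K_t)-\int_0^t(K^n_s-K_s)\,d\phi(X^n_s)$, and expand $d\phi(X^n_s)$ by Itô; every resulting piece is dominated by $\sup_s|K^n_s-K_s|$ times a quantity with bounded moments—using $\|\nabla\phi\|_\infty$, $\|\Delta\phi\|_\infty$, the uniform bound on the total variation of $K^n$, and a Burkholder–Davis–Gundy estimate on the martingale part—so it too tends to $0$. This completes the plan.
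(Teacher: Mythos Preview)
The paper does not prove this proposition; immediately before it, the reader is directed to Propositions~3.1 and~3.2 of \cite{PZ}. Your sketch is therefore not being compared against an in-text argument but stands on its own.

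For part~(1) your outline is the standard penalization comparison that underlies the cited results in \cite{LS} and \cite{PZ}: the confinement estimate via It\^o applied to $d(X^n)$, the pathwise comparison of $|X^n_t-X_t|^2$ using that both processes are driven by the same Brownian motion, and a Gronwall step against $ds+dL_s$. You correctly isolate the local-time integral $E\bigl[\int_0^T\sqrt{d(X^n_s)}\,dL_s\bigr]$ as the genuine obstacle and defer its control to the references; this is honest, but it also means the heart of the argument is still being cited rather than supplied.

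There is, however, a concrete gap in your treatment of the last limit in part~(2). After integrating by parts you write ``expand $d\phi(X^n_s)$ by It\^o'' and invoke $\|\Delta\phi\|_\infty$. The hypothesis is only $\phi\in\mathcal C^1_b(\mathbb R^N)$, so It\^o's formula is not available and $\Delta\phi$ need not even exist; as written the step fails for the stated class of test functions. A clean repair is to prove the statement first for $\phi\in\mathcal C^2_b$ by your route and then pass to $\phi\in\mathcal C^1_b$ by density, using the uniform bound $\sup_n E\bigl[\int_0^T n|\vec\delta(X^n_s)|\,ds\bigr]<\infty$ (the ``uniform bound on the total variation of $K^n$'' that you do mention, though you deploy it at the wrong place) to control
\[
\Bigl|\int_0^t(\phi-\phi_k)(X^n_s)\,dK^n_s\Bigr|\le\|\phi-\phi_k\|_\infty\int_0^T n|\vec\delta(X^n_s)|\,ds
\]
uniformly in $n$.
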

\begin{proposition}\label{boundLt}
For all $p\geq 1$, there exists a constant $C_p$ such that $\forall(t,x)\in [0,\infty)\times \bar{D}$,
$$
E^x[|L_t|^p]\leq C_p(1+t^p)
$$
and for each $\mu,t>0$, there exists $C(\mu,t)$ such that $\forall x\in \bar{D}$,
$$
E^x(e^{\mu L_t})\leq C(\mu, t).
$$
\end{proposition}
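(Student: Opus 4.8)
The plan is to apply It\^o's formula to $\psi(X_t)$, where $\psi$ is the smooth defining function of $D$, and thereby express the local time $L_t$ as the sum of a bounded term, a finite–variation term with bounded density, and a continuous martingale whose quadratic variation grows at most linearly. Since $D$ is bounded with smooth boundary, $\psi$, $\nabla\psi$ and $\Delta\psi$ are all bounded on the compact set $\bar D$; write $\|\psi\|_\infty:=\sup_{\bar D}|\psi|$. Moreover, on $\partial D$ we have $\vec n=\nabla\psi$ with $|\nabla\psi|=1$, so that $\langle\nabla\psi,\vec n\rangle=|\nabla\psi|^2=1$ there, which is the feature that lets us read off $L_t$.

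Concretely, applying It\^o's formula to $\psi(X_t)$ along \eqref{reflecting diffusion} and using that $dL_s$ is carried by $\{X_s\in\partial D\}$, where $\langle\nabla\psi,\vec n\rangle=1$, gives
\begin{equation*}
L_t=\psi(X_t)-\psi(X_0)-\int_0^t\langle\nabla\psi(X_s),dB_s\rangle-\int_0^t\Big(\langle\nabla\psi,b\rangle+\tfrac12\Delta\psi\Big)(X_s)\,ds.
\end{equation*}
Set $M_t:=\int_0^t\langle\nabla\psi(X_s),dB_s\rangle$ and $A_t:=\int_0^t(\langle\nabla\psi,b\rangle+\frac12\Delta\psi)(X_s)\,ds$. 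Then $|\psi(X_t)-\psi(X_0)|\le 2\|\psi\|_\infty$, $|A_t|\le Ct$ with $C$ depending only on the bounds of $b$, $\nabla\psi$, $\Delta\psi$, and $M$ is a continuous martingale with $\langle M\rangle_t=\int_0^t|\nabla\psi(X_s)|^2\,ds\le Ct$. All these bounds are uniform in the starting point $x\in\bar D$.

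For the polynomial estimate I would take $p$-th moments: since $L_t\ge 0$,
\begin{equation*}
E^x[|L_t|^p]\le C_p\Big(\|\psi\|_\infty^p+t^p+E^x[|M_t|^p]\Big),
\end{equation*}
and the Burkholder--Davis--Gundy inequality together with $\langle M\rangle_t\le Ct$ yields $E^x[|M_t|^p]\le C_p E^x[\langle M\rangle_t^{p/2}]\le C_p t^{p/2}$. Absorbing $t^{p/2}\le 1+t^p$ then gives $E^x[|L_t|^p]\le C_p(1+t^p)$, uniformly in $x$.

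The exponential bound is the more delicate point, and I expect it to be the main obstacle: one cannot usefully bound $E^x[e^{\mu L_t}]$ directly, but must route the exponential through the stochastic exponential of the martingale part. From the decomposition, $\mu L_t\le 2\mu\|\psi\|_\infty+\mu Ct-\mu M_t$, so
\begin{equation*}
e^{\mu L_t}\le e^{2\mu\|\psi\|_\infty+\mu Ct}\,\exp\Big(-\mu M_t-\tfrac{\mu^2}{2}\langle M\rangle_t\Big)\,\exp\Big(\tfrac{\mu^2}{2}\langle M\rangle_t\Big).
\end{equation*}
The middle factor is the Dol\'eans exponential $\mathcal E(-\mu M)_t$; because $\langle M\rangle_t\le Ct$ is bounded, Novikov's condition holds and it is a genuine martingale of expectation $1$, while the last factor is dominated by $e^{\mu^2 Ct/2}$. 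Taking expectations gives
\begin{equation*}
E^x[e^{\mu L_t}]\le e^{2\mu\|\psi\|_\infty+\mu Ct+\mu^2 Ct/2}=:C(\mu,t)<\infty,
\end{equation*}
uniformly in $x\in\bar D$. The whole argument hinges on the geometric fact $\langle\nabla\psi,\vec n\rangle=1$ on $\partial D$ and on the smoothness of $D$, which force the reflecting dynamics to have a martingale part with at most linearly growing quadratic variation.
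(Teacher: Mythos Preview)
Your argument is correct and self-contained. The paper itself does not prove this proposition; it simply refers the reader to Proposition~3.2 in Pardoux--Zhang \cite{PZ}. Your approach---apply It\^o's formula to the defining function $\psi$, use that $\langle\nabla\psi,\vec n\rangle=1$ on $\partial D$ to isolate $L_t$, then control the martingale part via BDG for the polynomial bound and via the stochastic exponential for the exponential bound---is exactly the standard route and is essentially what Pardoux--Zhang do. One minor remark: for the exponential estimate you do not even need Novikov, since $\mathcal E(-\mu M)$ is a nonnegative local martingale and hence a supermartingale with $E^x[\mathcal E(-\mu M)_t]\le 1$; the deterministic bound $\langle M\rangle_t\le Ct$ on the correction factor is all that matters.
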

\begin{corollary}
\label{bound.local time}
For any $x\in\bar{D}$,
$$
E^x\left[\int_{0}^{T}e^{\mu L_t}dt+\int_0^T e^{\mu L_t}dL_t\right]<+\infty.
$$
\end{corollary}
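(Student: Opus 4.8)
The plan is to bound the two summands separately, using only the two estimates of Proposition \ref{boundLt} together with the fact that $t\mapsto L_t$ is continuous, non-decreasing and satisfies $L_0=0$.

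First I would treat $E^x\big[\int_0^T e^{\mu L_t}\,dt\big]$. Since the integrand is non-negative, Tonelli's theorem lets me interchange the expectation and the deterministic time integral, giving $\int_0^T E^x[e^{\mu L_t}]\,dt$. Because $L$ is non-decreasing, $e^{\mu L_t}\le e^{\mu L_T}$ for every $t\in[0,T]$, so $E^x[e^{\mu L_t}]\le E^x[e^{\mu L_T}]\le C(\mu,T)$ by the exponential moment bound. Hence this first term is at most $T\,C(\mu,T)<\infty$. This monotonicity step is precisely what upgrades the pointwise-in-$t$ constant $C(\mu,t)$ of Proposition \ref{boundLt} to a bound uniform on $[0,T]$, which is what the time integration requires.

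For the second term I would first establish a pathwise identity. Since $L$ is continuous and of bounded variation, the change-of-variables formula for Stieltjes integrals applied to the primitive $s\mapsto \mu^{-1}e^{\mu s}$ yields, pathwise, $\int_0^T e^{\mu L_t}\,dL_t=\mu^{-1}(e^{\mu L_T}-e^{\mu L_0})=\mu^{-1}(e^{\mu L_T}-1)$, where I used $L_0=0$. Taking $E^x$ and invoking the exponential bound once more gives $E^x\big[\int_0^T e^{\mu L_t}\,dL_t\big]=\mu^{-1}\big(E^x[e^{\mu L_T}]-1\big)\le \mu^{-1}\big(C(\mu,T)-1\big)<\infty$. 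Adding the two estimates yields the claim.

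There is no genuine obstacle here: the corollary is a direct consequence of Proposition \ref{boundLt}. The only points requiring a little care are the two I have flagged, namely the monotonicity argument that makes the $t$-dependent constant uniform in the first term, and the elementary telescoping of the $dL_t$-integral in the second; Tonelli is immediate since every integrand is non-negative. One could avoid the change of variables by the cruder estimate $\int_0^T e^{\mu L_t}\,dL_t\le e^{\mu L_T}L_T$ followed by Cauchy--Schwarz, using a finite exponential moment of order $2\mu$ together with the polynomial bound $E^x[L_T^2]\le C_2(1+T^2)$, but the telescoping identity is cleaner and sharper.
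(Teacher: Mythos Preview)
Your proof is correct. For the first term you and the paper proceed identically, using monotonicity of $L$ to dominate $\int_0^T e^{\mu L_t}\,dt$ by $T\,E^x[e^{\mu L_T}]\le T\,C(\mu,T)$. For the second term the paper takes precisely the route you mention and set aside as ``cruder'': it bounds $\int_0^T e^{\mu L_t}\,dL_t\le e^{\mu L_T}L_T$ and then applies Cauchy--Schwarz, obtaining $E^x[e^{2\mu L_T}]^{1/2}\,E^x[L_T^2]^{1/2}\le C(2\mu,T)^{1/2}\big(C_2(1+T^2)\big)^{1/2}$, thereby invoking both estimates of Proposition~\ref{boundLt}. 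Your change-of-variables identity $\int_0^T e^{\mu L_t}\,dL_t=\mu^{-1}(e^{\mu L_T}-1)$ is sharper and more economical: it requires only the exponential moment of order $\mu$ rather than $2\mu$, and it dispenses with the polynomial moment bound on $L_T$ entirely.
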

\begin{proof}
Since the local time $\{L_t\}_{t\in[0,T]}$ is increasing, it follows that
\begin{equation*}
\begin{split}
E^x\left[\int_{0}^{T}e^{\mu L_t}dt+\int_0^T e^{\mu L_t}dL_t\right]
&\leq T E^x[e^{\mu L_T}]+E^x[e^{2\mu L_T}]^{\frac{1}{2}}\cdot E^x[|L_T|^2]^{\frac{1}{2}}\\
&\leq TC(\mu, T)+C(2\mu, T)^{\frac{1}{2}}(C_2(1+T^2))^{\frac{1}{2}}<+\infty, 
\end{split}
\end{equation*}
where the second inequality comes from Proposition \ref{boundLt}. 
\end{proof}

\section{Interpretation of the Divergence Term}
In this section, we will give a stochastic representation for the divergence term in $(\ref{PDE})$ expressed as a measurable field. The second order operator in  $(\ref{PDE})$ is nonsymmetric with Neumann boundary condition, then it is associated with a reflecting diffusion.

The bilinear form
 $$\mathcal{E}(u,v)=\frac{1}{2}\int_D \sum_{i=1}\frac{\partial u}{\partial x_i}\frac{\partial v}{\partial x_i}dx,\quad \forall u,v\in H^1(D)$$
is associated with the generator $L_0=\frac{1}{2}\Delta$ satisfying the Neumann boundary condition $\frac{\partial u}{\partial\vec{n}}=0$ on $\partial D$. Set the operator $Lu:=L_0 u+\langle b,\nabla u\rangle$. Then $L$ generates a semigroup $(P_t)_{t\geq 0}$ which possesses continuous densities $\{p(t,x,y),t\geq 0, x,y\in \bar{D}\}$.  It is well known that the reflecting diffusion ($\ref{reflecting diffusion}$) is associated with operator $L$,
and for any $u\in H^1(D)$, the Fukushima decomposition(\cite{FOT}) is as follows
$$
u(X_t)-u(X_s)=Mu|^t_s+Nu|^t_s\,,
$$
where $Mu|^t_s:=\int_s^t \langle \nabla u(X_r),dB_r\rangle$ is the martingale additive functional and $Nu|^t_s$ is the zero-energy additive functional. 
For $u\in C^2(\bar D)$,
$$
Nu|^t_s:=\int_s^t Lu(X_r)dr+\int_s^t \frac{\partial u}{\partial\vec{n}}(X_r)dL_r\,,
$$
where $L_t$ is the additive functional corresponding to the Lebesgue measure $\sigma(x)$ on $\partial D$. It follows that
$$
E^x[\int_{0}^{t}f(X_r)dL_r]=\int_{0}^{t}\int_{\partial{D}} p(r,x,y)f(y)\sigma(dy)dr.
$$

Consider the reverse process $(X_{T-t})_{t\in[0,T]}$ under the probability $P^{o}$, for $o\in\bar D$, with the non-homogenous transition function
\begin{eqnarray*}
Q_{0,t}u(x)=\frac{\int_D p(T-t,o,y)u(y)p(t,y,x)dy}{p(T,o,x)}.
\end{eqnarray*}
We denote the density of $Q_{0,t}$ by $p_Q(t,x,y)=\frac{p(T-t,o,y)p(t,y,x)}{p(T,o,x)}$.

By the methods in Propostion 3.1 of \cite{LS}, we obtained the following results associated with reflecting diffusions.
\begin{lemma}
Fix $o\in \bar{D}$ and set $p_t(x)=p(t,o,x)$, then
\begin{equation*}\begin{split}
Q_{0,t}u-u=\int_{0}^{t}Q_{0,r}(\frac{1}{2}\Delta u-\langle b,\nabla u\rangle
&+\frac{\langle \nabla p_{T-r},\nabla u\rangle}{p_{T-r}})dr\\&+\frac{1}{2}\int_{0}^{t}\int_{\partial{D}}p_Q(r,x,y)\frac{\partial u}{\partial \vec n}(y)\sigma(dy)dr.
\end{split}\end{equation*}
\end{lemma}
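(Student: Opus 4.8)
The plan is to differentiate the non-homogeneous semigroup $Q_{0,t}u(x)$ in time and recognise the result as the integrand on the right-hand side, then integrate back up. Since the denominator $p(T,o,x)$ does not depend on $t$, I would write $q(y):=p(T-t,o,y)$ and $P(y):=p(t,y,x)$, so that $p(T,o,x)\,Q_{0,t}u(x)=\int_D q(y)u(y)P(y)\,dy$. Differentiating under the integral sign (justified by the interior smoothness and strict positivity of the density $p$, together with its continuity up to $\partial D$, as in Proposition 3.1 of \cite{LS}) and inserting the Kolmogorov equations gives the starting point. As a function of its forward space variable, $q$ solves the Fokker--Planck equation $\partial_t q=-L^\ast q$ with $L^\ast q=\tfrac12\Delta q-\mathrm{div}(bq)$, while as a function of its backward space variable $P$ solves $\partial_t P=LP$. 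Hence
\[
\frac{d}{dt}\Big(p(T,o,x)\,Q_{0,t}u(x)\Big)=\int_D u\,\big(qLP-PL^\ast q\big)\,dy .
\]

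The key algebraic observation is that the bracket is a pure divergence: a direct computation yields $qLP-PL^\ast q=\mathrm{div}\,J$ with the current $J:=\tfrac12\big(q\nabla P-P\nabla q\big)+bqP$. Integrating by parts over $D$ (recall that $\vec n$ is the \emph{inward} normal, so the outward one is $-\vec n$) gives $\int_D u\,\mathrm{div}\,J\,dy=-\int_D\langle\nabla u,J\rangle\,dy-\int_{\partial D}u\,\langle J,\vec n\rangle\,d\sigma$. The next step is to show the boundary term vanishes, and this is exactly where the two boundary conditions of the density enter: $P$ inherits the Neumann condition $\partial_{\vec n}P=0$ from the reflecting diffusion, while the forward density $q$ satisfies the adjoint no-flux condition $\langle\tfrac12\nabla q-bq,\vec n\rangle=0$ on $\partial D$ (conservation of mass for the reflected process). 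Substituting $\partial_{\vec n}P=0$ and then $\tfrac12\partial_{\vec n}q=q\langle b,\vec n\rangle$ into $\langle J,\vec n\rangle=\tfrac12 q\partial_{\vec n}P-\tfrac12 P\partial_{\vec n}q+qP\langle b,\vec n\rangle$ shows $\langle J,\vec n\rangle\equiv0$, so the surface contribution from $J$ disappears.

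It then remains to rewrite $-\int_D\langle\nabla u,J\rangle\,dy$ in the advertised form. Expanding $J$ produces $-\tfrac12\int_D q\langle\nabla u,\nabla P\rangle+\tfrac12\int_D P\langle\nabla u,\nabla q\rangle-\int_D qP\langle b,\nabla u\rangle$. I would recombine the first integral with one half of the second into $-\tfrac12\int_D\langle\nabla(qP),\nabla u\rangle$ and integrate this by parts back onto $u$; because the inward normal is $\vec n$, it produces $\tfrac12\int_D qP\,\Delta u\,dy$ together with the surface integral $\tfrac12\int_{\partial D}qP\,\partial_{\vec n}u\,d\sigma$, which is precisely the boundary term in the statement. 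The leftover half $\int_D P\langle\nabla q,\nabla u\rangle\,dy$ equals $\int_D qP\,\tfrac{\langle\nabla q,\nabla u\rangle}{q}\,dy$, and together with $-\int_D qP\langle b,\nabla u\rangle$ the interior part assembles into $\int_D qP\big(\tfrac12\Delta u-\langle b,\nabla u\rangle+\tfrac{\langle\nabla q,\nabla u\rangle}{q}\big)dy$. Dividing by $p(T,o,x)$ and recalling $q=p_{T-t}$ and $p_Q(t,x,y)=qP/p(T,o,x)$ identifies $\tfrac{d}{dt}Q_{0,t}u(x)$ with $Q_{0,t}\big(\tfrac12\Delta u-\langle b,\nabla u\rangle+\tfrac{\langle\nabla p_{T-t},\nabla u\rangle}{p_{T-t}}\big)(x)+\tfrac12\int_{\partial D}p_Q(t,x,y)\,\partial_{\vec n}u(y)\,\sigma(dy)$. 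Integrating in $t$ from $0$ and using $p(0,y,x)=\delta_x(y)$, so that $Q_{0,0}u=u$, yields the claimed identity.

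The main obstacle I anticipate is the careful treatment of the boundary. One must identify the correct (adjoint, no-flux) boundary condition for the forward density $q$ as opposed to the plain Neumann condition for $P$, and then keep scrupulous track of the inward-versus-outward normal so that the factor $\tfrac12$ and the sign of the surviving boundary integral come out exactly as stated. Equally delicate is the analytic justification of differentiating under the integral and integrating by parts near $\partial D$, for which one appeals to the regularity and strict positivity of $p$ up to the boundary established by the Lions--Sznitman penalization analysis of \cite{LS}; the positivity is also what makes the term $\tfrac{\langle\nabla p_{T-t},\nabla u\rangle}{p_{T-t}}$ well defined.
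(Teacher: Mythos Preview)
Your argument is correct and follows essentially the same route as the paper: both proofs rest on the Kolmogorov equations for the density and integration by parts over $D$, with the boundary contributions controlled by the Neumann/no-flux conditions satisfied by $p(t,y,x)$ in its backward variable and $p(T-t,o,y)$ in its forward variable. The only difference is organizational: the paper starts from the integrated quantity $p_T(x)\int_0^t Q_{0,r}(\tfrac12\Delta u-\langle b,\nabla u\rangle)\,dr$ and moves the operator off $u$ by integration by parts, invoking $L^\ast p_t=\partial_t p_t$ at the end, whereas you first differentiate $Q_{0,t}u$ in $t$ and identify the divergence structure $qLP-PL^\ast q=\mathrm{div}\,J$ before integrating back. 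Your presentation makes the vanishing of the extraneous boundary term and the appearance of the $\tfrac12\int_{\partial D}p_Q\,\partial_{\vec n}u$ contribution more transparent, but the underlying computation is the same.
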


\begin{proof}

\begin{equation*}\begin{split}
&p_T(x)\int_{0}^{t}Q_{0,r}(\frac{1}{2}\Delta u-\langle b,\nabla u\rangle)dr\\
=&\int_{0}^{t}\int_D p(T-r,o,y)(\frac{1}{2}\Delta u-\langle b,\nabla u\rangle)(y)p(r,y,x)dy\\
=&-\int_{0}^{t}\int_D L^*p_{T-r}(y)u(y)p(r,y,x)dydr+\int_{0}^{t}\int_D L^*p(r,y,x)p(T-r,o,y)u(y)dydr\\
&-\int_{0}^{t}\int_D \langle \nabla p_{T-r},\nabla u\rangle p(r,y,x)dydr
-\frac{1}{2}\int_0^t\int_{\partial D}p(T,o,y)\frac{\partial u}{\partial \vec{n}}(y)p(r,y,x)\sigma(dy)dr\\
=& \int_D p(T-t,o,y)u(y)p(t,y,x)dy-p_T(x)u(x)-\int_{0}^{t}\int_D \langle \nabla p_{T-r},\nabla u\rangle p(r,y,x)dydr\\
&-\frac{1}{2}\int_0^t\int_{\partial D}p(T,o,y)\frac{\partial u}{\partial \vec{n}}(y)p(r,y,x)\sigma(dy)dr,
\end{split}\end{equation*}
where the second equality is derived by integration by parts,  $L^*$ is the dual operator of $L$ on $L^2(D)$,  and the last equality is obtained by $L^* p(t,o,y)=\partial_t p(t,o,y)$.
\end{proof}

\begin{proposition}
Fix $o\in \bar{D}$ and set the following process
\begin{equation}\begin{split}
\bar{M}u|^T_{T-t}:=u(X_{T-t})&-u(X_T)-\int_{0}^{t}(\frac{1}{2}\Delta u-\langle b,\nabla u\rangle)(X_{T-r})dr
\\&-\int_{0}^{t}\frac{\langle \nabla p_{T-r},\nabla u\rangle(X_{T-r})}{p_{T-r}(X_{T-r})}dr-\int_{T-t}^{T}\frac{\partial u}{\partial \vec n}(X_r)dL_r.
\end{split}\end{equation}
\vspace{2mm}

(1). $\{\bar{M}u|^T_{T-t}\}_{t\in[0,T]}$ is a martingale with respect of the filtration $\mathcal{F}^{'}_t=\sigma\{X_{T-s},s\in[0,t]\}$
and $$\bar{M}u|^T_t-\bar{M}u|^T_s=\bar{M}u|^s_{t}.$$
(2). The following relation holds:
$$u(X_t)-u(X_0)=\frac{1}{2}Mu|^t_0-\frac{1}{2}(\bar{M}u|^T_0-\bar{M}u|^T_t)+\int_{0}^{t}\langle b,\nabla u\rangle (X_r)dr
-\frac{1}{2}\int_{0}^{t}\frac{\langle \nabla p_{r},\nabla u\rangle(X_{r})}{p_{r}(X_{r})}dr.$$
\end{proposition}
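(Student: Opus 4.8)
The plan is to read both assertions through the time-reversed process. Write $\hat X_t:=X_{T-t}$; under $P^{o}$ this is a time-inhomogeneous Markov process whose one-step evolution is exactly the operator identity proved in the preceding Lemma, and $\mathcal{F}'_t=\sigma\{X_{T-s}, s\in[0,t]\}$ is its natural filtration. Assertion (1) will then be a Dynkin-type martingale identity for $\hat X$, while assertion (2) is an algebraic superposition of the forward Fukushima decomposition with the definition of $\bar M u$. Throughout I take $u\in C^2(\bar D)$, so that $\Delta u$, $\nabla u$ and $\tfrac{\partial u}{\partial\vec n}$ are genuine continuous functions and no Sobolev approximation is needed at this stage.

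For part (1), I would first rewrite the process as
$$\bar M u|^T_{T-t}=u(\hat X_t)-u(\hat X_0)-\int_0^t\Big(\tfrac12\Delta u-\langle b,\nabla u\rangle+\tfrac{\langle\nabla p_{T-r},\nabla u\rangle}{p_{T-r}}\Big)(\hat X_r)\,dr-\int_{T-t}^T\tfrac{\partial u}{\partial\vec n}(X_r)\,dL_r,$$
so that the integrand of the $dr$-integral is precisely the interior generator $\tilde L_r$ of $\hat X$ read off from the Lemma. Using the Markov property of $\hat X$ together with the Lemma — which expresses $Q_{0,t}u-u$ as $\int_0^t Q_{0,r}(\tilde L_r u)\,dr$ plus a boundary term — I would show that for $s\le t$ the conditional expectation $E[\bar M u|^T_{T-t}-\bar M u|^T_{T-s}\mid\mathcal{F}'_s]$ vanishes, which is the martingale property. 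The increment identity $\bar M u|^T_t-\bar M u|^T_s=\bar M u|^s_t$ is then immediate from the telescoping of the $u(\hat X)$-term and the additivity of the two integrals over adjacent time intervals.

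The main obstacle is the boundary term. The Lemma delivers the boundary part of the reversed generator only in expectation, as the measure $\tfrac12\int_0^t\int_{\partial D}p_Q(r,\cdot,y)\tfrac{\partial u}{\partial\vec n}(y)\sigma(dy)\,dr$, whereas the definition of $\bar M u$ subtracts the pathwise local-time integral $\int_{T-t}^T\tfrac{\partial u}{\partial\vec n}(X_r)\,dL_r$. I therefore have to identify the boundary positive continuous additive functional of $\hat X$ with the time-reversal of the forward local time $L$, and check that it carries exactly the predictable compensator produced by the Lemma. The bridge is the occupation identity $E^x[\int_0^t f(X_r)\,dL_r]=\int_0^t\int_{\partial D}p(r,x,y)f(y)\sigma(dy)\,dr$ recorded above, applied on $[T-t,T]$ and combined with $\int_D p(r,y,x)\,dx=1$; reconciling the normalisation of $L$ (and the factor carried by the $\tfrac12\Delta$ appearing in the Lemma) against the coefficient used in the definition is the delicate bookkeeping I expect to be the crux of the argument.

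For part (2) no further probability is required: it is a deterministic rearrangement. From the definition, after the substitution $s=T-r$ in each time integral, $\bar M u|^T_0-\bar M u|^T_t$ equals
$$u(X_0)-u(X_t)-\int_0^t\Big(\tfrac12\Delta u-\langle b,\nabla u\rangle\Big)(X_s)\,ds-\int_0^t\tfrac{\langle\nabla p_s,\nabla u\rangle(X_s)}{p_s(X_s)}\,ds-\int_0^t\tfrac{\partial u}{\partial\vec n}(X_r)\,dL_r.$$
I would insert this into the right-hand side of (2); the two $\tfrac{\langle\nabla p,\nabla u\rangle}{p}$-integrals cancel, the $\langle b,\nabla u\rangle$ contributions combine into $+\int_0^t\langle b,\nabla u\rangle\,dr$, and after collecting the $\tfrac12$-weighted terms the identity collapses to $u(X_t)-u(X_0)=Mu|^t_0+\int_0^t Lu(X_r)\,dr+\int_0^t\tfrac{\partial u}{\partial\vec n}(X_r)\,dL_r$, i.e.\ to the forward Fukushima decomposition already available for $u\in C^2(\bar D)$. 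Thus (2) is equivalent to that decomposition once the form of $\bar M u$ is inserted, and the only thing to verify is that all four terms match with the stated coefficients.
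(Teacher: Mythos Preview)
Your approach is correct and essentially the same as the paper's. For part (2) the paper does precisely the algebraic superposition you describe: it rewrites $u(X_t)-u(X_0)$ via the definition of $\bar M u$ (after the same change of variable $s=T-r$), adds the forward Fukushima decomposition, and divides by two; your verification is just this computation read in the other direction. For the increment identity in (1) the paper also just telescopes, as you say.

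One remark: you spend effort on the martingale property in (1), worrying about matching the pathwise local-time integral with the boundary term the Lemma produces only in expectation, and about the factor $\tfrac12$. This caution is reasonable, but the paper's own proof is in fact \emph{less} detailed than yours here --- it establishes only the increment identity $\bar M u|^T_t-\bar M u|^T_s=\bar M u|^s_t$ and the decomposition (2), and does not spell out the conditional-expectation argument or the boundary bookkeeping at all (these are implicitly regarded as immediate from the preceding Lemma and the occupation-time identity for $L$). So your plan covers strictly more than what the paper writes down.
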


\begin{proof}
Since
\begin{equation*}\begin{split}
\bar{M}u|^T_{t}:=u(X_{t})-u(X_T)&-\int_{0}^{T-t}(\frac{1}{2}\Delta u-\langle b,\nabla u\rangle)(X_{T-r})dr
\\&-\int_{0}^{T-t}\frac{\langle \nabla p_{T-r},\nabla u\rangle(X_{T-r})}{p_{T-r}(X_{T-r})}dr-\int_{t}^{T}\frac{\partial u}{\partial \vec n}(X_r)dL_r, 
\end{split}
\end{equation*}
it follows that 
\begin{equation*}\begin{split}
\bar{M}u|^T_{t}-\bar{M}u|^T_{s}&=u(X_{t})-u(X_{s})-\int_0^{s-t} (\frac{1}{2}\Delta u-\langle b,\nabla u\rangle)(X_{s-r})dr\\
&-\int_{0}^{s-t}\frac{\langle \nabla p_{s-r},\nabla u\rangle(X_{s-r})}{p_{s-r}(X_{s-r})}dr
-\int_{t}^{s}\frac{\partial u}{\partial \vec n}(X_{r})dL_r=\bar{M}u|^s_t
\end{split}
\end{equation*}
and
\begin{equation*}\begin{split}
u(X_t)-u(X_0)=&\,\bar{M}u|^T_t-\bar{M}u|^T_0-\int_{T-t}^{T}(\frac{1}{2}\Delta u-\langle b,\nabla u\rangle)(X_{T-r})dr
\\&-\int_{0}^{t}\frac{\partial u}{\partial \vec n}(X_{r})d{L}_r
-\int_{T-t}^{T}\frac{\langle \nabla p_{T-r},\nabla u\rangle(X_{T-r})}{p_{T-r}(X_{T-r})}dr\\
=&-\bar{M}u|^t_0-\int_{0}^{t}(\frac{1}{2}\Delta u-\langle b,\nabla u\rangle)(X_{r})dr
\\&-\int_{0}^{t}\frac{\partial u}{\partial \vec n}(X_{r})d{L}_r-\int_{0}^{t}\frac{\langle \nabla p_{r},\nabla u\rangle(X_{r})}{p_{r}(X_{r})}dr.
\end{split}
\end{equation*}
Then
\begin{equation*}
2(u(X_t)-u(X_0))=Mu|^t_0-\bar{M}u|^t_0+2\int_{0}^{t}\langle b,\nabla u\rangle (X_r)dr
-\int_{0}^{t}\frac{\langle \nabla p_{r},\nabla u\rangle(X_{r})}{p_{r}(X_{r})}dr.
\end{equation*}

Therefore, we get the forward-backward martingale decomposition
$$u(X_t)-u(X_0)=\frac{1}{2}Mu|^t_0-\frac{1}{2}\bar{M}u|^t_0+\int_{0}^{t}\langle b,\nabla u\rangle (X_r)dr
-\frac{1}{2}\int_{0}^{t}\frac{\langle \nabla p_{r},\nabla u\rangle(X_{r})}{p_{r}(X_{r})}dr.$$
\end{proof}

\begin{corollary}
(1). For $u,v\in H^1(D)$,
$$\langle Mu, Mv\rangle_t=\int_{0}^{t}\langle \nabla u, \nabla v\rangle(X_r)dr$$ and 
$$\langle \bar{M}u|^T_\cdot, \bar{M}v|^T_\cdot \rangle_t=\int_{t}^{T}\langle \nabla u, \nabla v\rangle(X_r)dr.$$
(2). For $x=(x_1,\cdots,x_N)\in D$, set $u_i (x)=x_i$, $M^i(t)=Mu_i|^t_0$ and $\bar{M}^i(t,T)=\bar{M}u_i|^T_t$, then
$$X^i_t-X^i_0=\frac{1}{2}M^i(t)-\frac{1}{2}\bar{M}^i(0,t)+\int_{0}^{t} b_i(X_r)dr
-\frac{1}{2}\int_{0}^{t}\frac{\partial_i p(X_{r})}{p_{r}(X_{r})}dr.$$
\end{corollary}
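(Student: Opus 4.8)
The plan is to handle the three identities separately: the forward bracket in (1) comes straight from the It\^o representation of $Mu$; the backward bracket follows by representing $\bar{M}u$ as a stochastic integral driven by the Brownian motion of the time-reversed diffusion; and (2) is a direct specialization of the forward-backward decomposition just proved.

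First I would use that the Fukushima decomposition gives $Mu|^t_0=\int_0^t\langle\nabla u(X_r),dB_r\rangle$ and $Mv|^t_0=\int_0^t\langle\nabla v(X_r),dB_r\rangle$, two It\^o integrals against the same $N$-dimensional Brownian motion $B$. The covariation rule for stochastic integrals then yields
\[
\langle Mu,Mv\rangle_t=\int_0^t\langle\nabla u,\nabla v\rangle(X_r)\,dr
\]
at once for $u,v\in C^2(\bar{D})$. To reach general $u,v\in H^1(D)$ I would approximate in the $\|\cdot\|_{\mathcal{E}_1}$-norm by smooth functions, pass to the limit using the energy identity $E\langle M(u-u_n)\rangle_t=E\int_0^t|\nabla(u-u_n)|^2(X_r)\,dr$, and recover the off-diagonal bracket by polarization.

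Next, for the backward bracket I would reverse time. Writing $\hat{X}_t:=X_{T-t}$, the preceding Proposition already shows $\{\bar{M}u|^T_{T-t}\}_t$ is a martingale for the reversed filtration $\mathcal{F}'_t$, and the Lemma identifies the generator of the reversed semigroup $Q_{0,t}$ as $\tfrac{1}{2}\Delta$ plus first-order terms. Since the second-order part is still $\tfrac{1}{2}\Delta$, the reversed diffusion carries the identity as diffusion coefficient, so $\bar{M}u|^T_{T-t}$ is representable as $\int_0^t\langle\nabla u(\hat{X}_r),d\hat{B}_r\rangle$ for the Brownian motion $\hat{B}$ driving the reversed equation. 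The covariation rule then gives $\int_0^t\langle\nabla u,\nabla v\rangle(X_{T-r})\,dr=\int_{T-t}^T\langle\nabla u,\nabla v\rangle(X_r)\,dr$; substituting $s=T-t$ produces the claimed $\langle\bar{M}u|^T_\cdot,\bar{M}v|^T_\cdot\rangle_t=\int_t^T\langle\nabla u,\nabla v\rangle(X_r)\,dr$. The finite-variation drift and boundary local-time terms in the definition of $\bar{M}u$ do not contribute to the bracket.

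Finally, (2) is obtained by inserting $u_i(x)=x_i$ into part (2) of the Proposition: then $\nabla u_i=e_i$, so $\tfrac{1}{2}\Delta u_i=0$, $\langle b,\nabla u_i\rangle=b_i$, and $\langle\nabla p_r,\nabla u_i\rangle/p_r=\partial_i p_r/p_r$, and with $M^i(t)=Mu_i|^t_0$, $\bar{M}^i(0,t)=\bar{M}u_i|^t_0$ the stated formula for $X^i_t-X^i_0$ drops out. The main obstacle is the backward bracket: one must justify the time-reversal representation rigorously, namely that $\bar{M}u$ is indeed the martingale additive functional of the reversed process with unchanged energy measure, and then pass from $C^2$ to $H^1$ data by $\mathcal{E}_1$-approximation while controlling the boundary local-time term through the estimates of Proposition \ref{boundLt} and Corollary \ref{bound.local time}.
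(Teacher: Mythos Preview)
The paper states this corollary without proof, treating it as an immediate consequence of the preceding Proposition and the explicit form $Mu|^t_s=\int_s^t\langle\nabla u(X_r),dB_r\rangle$. Your proposal fills in the details correctly: the forward bracket is direct from the It\^o isometry, the backward bracket follows from the time-reversal representation (the second-order part of the reversed generator in the Lemma is still $\tfrac{1}{2}\Delta$, so the reversed martingale has the same energy measure), and (2) is a pure substitution into part (2) of the Proposition. One minor remark: since the paper already asserts $Mu|^t_s=\int_s^t\langle\nabla u(X_r),dB_r\rangle$ for all $u\in H^1(D)$, the $C^2$-to-$H^1$ approximation is not needed for the forward bracket; it is only needed for the backward one, where the definition of $\bar{M}u$ in the Proposition involves $\Delta u$ explicitly.
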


\vspace{4mm}
For $g=(g_1,\cdots,g_N): \mathbb{R}^N\rightarrow \mathbb{R}^N$, we define the backward stochastic integral 
\begin{eqnarray*}
\int_s^t g_i(X_{r})d\bar{M}^i_t:=(L^2-)\lim_{\delta\rightarrow 0}\sum_{j=0}^{n-1} g(X_{t_{j+1}})\bar{M}^i(t_j,t_{j+1}),
\end{eqnarray*}
where the limit is over the partition $s=t_0<t_1<\cdots<t_n=t$ and $\delta=\max_j (t_{j+1}-t_j)$.

Define
\begin{equation*}
\int_s^t g\ast dX_r=\int_s^t g(X_r) dM_r+\int_s^t g(X_r)d\bar{M}_r
+\int_s^t \frac{\langle g, \nabla p_r\rangle}{p_r}(X_r)dr+2\int_s^t \langle g,\vec n\rangle(X_r)dL_r\,.
\end{equation*}

\begin{proposition}\label{decompositionofG}
For $G\in H^1(D)$, then we have the decomposition
\begin{equation*}
G(X_t)-G(X_s)=\int_s^t \langle \nabla G(X_r),dM_r\rangle+\int_s^t \langle b,\nabla G\rangle(X_r)dr+\int_s^t \frac{\partial G}{\partial \vec n}(X_r)dL_r-\frac{1}{2}\int_s^t \nabla G\,\ast\,dX_r.
\end{equation*}
\end{proposition}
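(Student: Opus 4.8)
The plan is to reduce the asserted identity to the forward--backward martingale decomposition obtained above, applied to $G$ on the interval $[s,t]$, by substituting the definition of the symmetric integral $\int_s^t g\ast dX_r$ with $g=\nabla G$ and then collecting terms. The whole content of the statement is that, after this substitution, the local-time contributions cancel and what survives is exactly the forward--backward decomposition of $G$.

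Concretely, I would first expand the last term on the right-hand side. By the definition of $\ast\,dX$,
\begin{equation*}
-\frac{1}{2}\int_s^t \nabla G\ast dX_r=-\frac{1}{2}\int_s^t \langle\nabla G(X_r),dM_r\rangle-\frac{1}{2}\int_s^t \nabla G(X_r)d\bar{M}_r-\frac{1}{2}\int_s^t \frac{\langle\nabla G,\nabla p_r\rangle}{p_r}(X_r)\,dr-\int_s^t \langle\nabla G,\vec{n}\rangle(X_r)\,dL_r.
\end{equation*}
Adding the explicit forward martingale term $\int_s^t\langle\nabla G(X_r),dM_r\rangle$ leaves exactly one half of it, and, since $\frac{\partial G}{\partial\vec{n}}=\langle\nabla G,\vec{n}\rangle$ on $\partial D$, the two local-time integrals cancel. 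Thus, writing $MG|^t_s=\int_s^t\langle\nabla G(X_r),dM_r\rangle$ and $\bar{M}G|^t_s=\int_s^t\nabla G(X_r)d\bar{M}_r$, the whole right-hand side collapses to
\begin{equation*}
\frac{1}{2}MG|^t_s-\frac{1}{2}\bar{M}G|^t_s+\int_s^t\langle b,\nabla G\rangle(X_r)\,dr-\frac{1}{2}\int_s^t\frac{\langle\nabla p_r,\nabla G\rangle}{p_r}(X_r)\,dr.
\end{equation*}
This is precisely the forward--backward decomposition of the previous Proposition, now written on $[s,t]$ instead of $[0,t]$; the passage to a general subinterval is legitimate because each term is an additive functional and $\bar{M}$ satisfies $\bar{M}u|^T_t-\bar{M}u|^T_s=\bar{M}u|^s_t$. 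Hence the right-hand side equals $G(X_t)-G(X_s)$, as claimed.

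For $G\in C^2(\bar{D})$ this already closes the argument, since both the forward Fukushima decomposition (with its explicit zero-energy part) and the forward--backward decomposition are available in that regularity. The remaining task is to remove the smoothness and treat a general $G\in H^1(D)$. Here I would choose $G_k\in C^\infty(D)$ with $G_k\to G$ in $\|\cdot\|_{\mathcal{E}_1}$, which is possible since $\mathcal{F}=H^1(D)$ is the $\|\cdot\|_{\mathcal{E}_1}$-closure of $C^\infty(D)$, and pass to the limit term by term. The forward martingale part converges in $L^2$ via the energy identity $\langle MG\rangle_t=\int_0^t|\nabla G|^2(X_r)\,dr$ from the Corollary above, together with the bound $E^x\int_0^T|\nabla(G_k-G)|^2(X_r)\,dr\le C\|\nabla(G_k-G)\|^2$ coming from the boundedness of the transition density; the drift term is controlled in the same way using boundedness of $b$.

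The main obstacle is the convergence of the two genuinely backward objects, namely $\bar{M}G|^t_s$ and the singular integral $\int_s^t\frac{\langle\nabla p_r,\nabla G\rangle}{p_r}(X_r)\,dr$, because the logarithmic-derivative factor $\nabla p_r/p_r$ is not bounded. I expect these to be handled jointly: the backward martingale is controlled in $L^2$ by its bracket $\langle\bar{M}(G_k-G)|^T_\cdot\rangle_t=\int_t^T|\nabla(G_k-G)|^2(X_r)\,dr$ (again from the Corollary above), while the singular drift is tamed using the density estimates for $p(r,o,\cdot)$ and the integrability furnished by Corollary \ref{bound.local time}. Once these two convergences are secured, the identity for $G_k$ passes to the limit and yields the stated decomposition for every $G\in H^1(D)$.
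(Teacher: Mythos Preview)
The paper does not give a proof of this proposition; it is stated immediately after the definition of $\int_s^t g\ast dX_r$ as an evident consequence of the forward--backward decomposition in the preceding proposition. Your argument---expand the $\ast$-integral with $g=\nabla G$, cancel the local-time contribution against $\int_s^t\frac{\partial G}{\partial\vec n}(X_r)\,dL_r$, and recognise the remainder as Proposition~2(2) on $[s,t]$---is exactly the intended derivation, and your approximation step from $C^\infty(D)$ to $H^1(D)$ supplies more detail than the paper does. The one point left implicit in both your write-up and the paper is the identification $\int_s^t\nabla G(X_r)\,d\bar M_r=\bar MG|^t_s$, i.e.\ that the backward stochastic integral of $\nabla G$ against the coordinate backward martingales $\bar M^i$ equals the backward martingale part of $G$; this is part of the Stoica framework \cite{S} on which the whole section rests.
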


The following lemma, which can be proved similarly as Lemma 3.1 of \cite{S},  is very important in  interpretation of the divergence term $div g$ in PDE $(\ref{PDE})$.
\begin{lemma}
\label{div g}
For $g\in L^2(\mathbb{R}^N;\mathbb{R}^N)$, if there is a function $G\in L^2(\mathbb{R}^N)$, such that $div g=G$ in weak sense, then
\begin{eqnarray*}
\int_s^t G(X_r)dr=-\int_s^t g\ast dX_r.
\end{eqnarray*}
\end{lemma}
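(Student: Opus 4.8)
The plan is to prove the identity first for smooth vector fields $g$, where $\mathrm{div}\, g$ is a genuine function, and then to remove the regularity by a mollification argument, controlling each of the four terms in the definition of $\int_s^t g\ast dX_r$ by the occupation estimates coming from Proposition \ref{boundLt} and Corollary \ref{bound.local time}. Since the whole-space field $g$ is only ever evaluated along $X$, which stays in $\bar D$, only the restrictions of $g$ and of $G=\mathrm{div}\, g$ to $\bar D$ enter, so the boundary of $D$ still plays a role even though the hypothesis is stated on $\mathbb R^N$.

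For $g\in C^1$ I would compute $\int_s^t g\ast dX_r$ directly from its definition
$$\int_s^t g\ast dX_r=\int_s^t g(X_r)\,dM_r+\int_s^t g(X_r)\,d\bar{M}_r+\int_s^t\frac{\langle g,\nabla p_r\rangle}{p_r}(X_r)\,dr+2\int_s^t\langle g,\vec n\rangle(X_r)\,dL_r,$$
using the forward--backward martingale decomposition of the coordinate processes $X^i$ together with the brackets $\langle M^i,M^j\rangle_r=\delta_{ij}\,dr$ and $\langle\bar{M}^i,\bar{M}^j\rangle_r=\delta_{ij}\,dr$. The mechanism is a Lyons--Zheng type cancellation: applying It\^o's formula forward to $g_i(X_r)$ and applying it backward along the reversed filtration $\mathcal F'_t$, the sum of the forward integral $\int g_i\,dM^i$ and the backward integral $\int g_i\,d\bar{M}^i$ reduces, by the time-reversal identity, to the covariation $-\langle g_i(X_\cdot),M^i\rangle$, which by It\^o equals $-\int_s^t\partial_i g_i(X_r)\,dr$ up to the compensators generated by the non-Brownian parts of $X$. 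Summing over $i$ produces exactly $-\int_s^t\mathrm{div}\, g(X_r)\,dr$, while the logarithmic-derivative integral $\int\frac{\langle g,\nabla p_r\rangle}{p_r}\,dr$ and the boundary integral $2\int\langle g,\vec n\rangle\,dL_r$ are precisely the compensators that absorb, respectively, the drift $b$ and the reflection at $\partial D$. This yields $\int_s^t g\ast dX_r=-\int_s^t G(X_r)\,dr$ for smooth $g$; note that specialising to $g=\nabla\Phi$ turns Proposition \ref{decompositionofG} into the ordinary Fukushima decomposition, so the smooth identity is consistent with what is already proved and does not require $g$ to be a gradient.

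For general $g\in L^2(\mathbb R^N;\mathbb R^N)$ with $\mathrm{div}\, g=G\in L^2(\mathbb R^N)$ I would mollify, $g^n:=g\ast\rho_{1/n}$, so that $g^n\to g$ and $\mathrm{div}\, g^n=G\ast\rho_{1/n}\to G$ in $L^2$ on a neighbourhood of $\bar D$, apply the smooth identity to each $g^n$, and pass to the limit. This reduces matters to the $L^2$-continuity of the four terms. The two martingale integrals are handled by the forward and backward It\^o isometries, which bound their $L^2(\Omega)$ norms by occupation integrals $E^x\int_s^t|g^n-g|^2(X_r)\,dr=\int_s^t\!\!\int_D|g^n-g|^2(y)\,p(r,x,y)\,dy\,dr$; the absolutely continuous term $\int G^n(X_r)\,dr$ is controlled identically. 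The boundary term is estimated through the occupation formula $E^x\int_s^t|g^n-g|(X_r)\,dL_r=\int_s^t\!\!\int_{\partial D}|g^n-g|(y)\,p(r,x,y)\,\sigma(dy)\,dr$, whose finiteness uses the exponential moment bounds of Proposition \ref{boundLt} and Corollary \ref{bound.local time}.

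The main obstacle is twofold. First, in the smooth case the forward--backward bookkeeping must be carried out so that the covariation corrections from reversing time, together with the drift $b$ and the reflection, combine with the two compensating integrals in the definition of $\ast dX$ to leave precisely $-\int\mathrm{div}\, g\,dr$ and nothing else; this is exactly where the particular form of $g\ast dX$ and of the transition density $p$ must be used. Second, in the limit I must control the logarithmic-derivative term $\int\frac{\langle g^n-g,\nabla p_r\rangle}{p_r}\,dr$ and the boundary term near $r=0$, where $p(r,x,\cdot)$ is singular and the trace of the rough field $g$ on $\partial D$ is delicate; this will likely force me to first treat $g\in L^\infty$ (invoking the regularity of the potential from Proposition \ref{regularity G} and Remark \ref{regularity of G}) and then bootstrap, or to exploit the smoothing of $p(r,x,\cdot)$ for $r>0$ together with a separate short-time estimate.
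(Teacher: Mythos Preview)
Your approach is correct and is essentially the one the paper has in mind: the paper does not give its own proof but simply refers to Lemma~3.1 of \cite{S}, and Stoica's argument is precisely the two-step scheme you outline---first establish the identity for smooth $g$ via the Lyons--Zheng forward--backward cancellation (so that the sum of the forward and backward stochastic integrals of $g$ against the coordinate martingales produces $-\int_s^t\mathrm{div}\,g(X_r)\,dr$, with the remaining terms in the definition of $g\ast dX$ absorbing the drift and, in the present reflected setting, the boundary local time), and then pass to general $g\in L^2$ by mollification, using the occupation-time formulas to control each piece in $L^2(\Omega)$. Your identification of the two delicate points (the $\nabla p_r/p_r$ integral near $r=0$ and the trace of $g$ on $\partial D$) is accurate; in Stoica's original whole-space setting neither arises, and here they are handled by the heat-kernel bounds for $p(r,x,\cdot)$ and by the density of smooth fields, exactly as you suggest.
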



\section{Backward stochastic differential equations with $\ast-$ integral}

In this section, we suppose the divergence term $g$ only depends on $(t,x)$. We will prove that under certain conditions, the following BSDE admits a unique solution $(Y,Z)$,
\begin{equation}\label{BSDE Y}
dY_t=-f(t, X_t,Y_t,Z_t)dt-(h(t, X_t,Y_t)+2\langle g(t,X_t),\vec n\rangle )dL_t+ g(t, X_t)*dX_t+Z_tdB_t.
\end{equation}

In the following discussion,  we simply assume $b=0$ in PDE \eqref{PDE},  and consider the symmetric reflecting diffusions correspondingly. Actually, we can combined the drift term $\langle b,\nabla u\rangle$  and nonlinear term $f(t,x,u,\nabla u)$  into a new nonlinear term $F(t,x,u,\nabla u):=\langle b,\nabla u\rangle+f(t,x,u,\nabla u)$, so that this assumption is realized,  without weakening our result.  

\vspace{3mm}

\vspace{3mm}
The following lemma is obtained by Reisz representation theorem and Proposition \ref{regularity G}.
\begin{lemma}
	\label{lamma. regularity of G}
	Assume $g\in L^2([0,T]\times D;\mathbb{R}^N)$, then there exists a unique function $G\in L^2([0,T]\times D; \mathbb{R}^N)$, $G(t,\cdot)\in H^1(D)$ for almost all $t\in [0,T]$ ,
	$\int_0^T \|G(t)\|^2_{H^1(D)}dt<+\infty$, and $\forall\phi\in \mathcal{C}^\infty([0,T]\times D)$,
	\begin{eqnarray*}
		\int_0^T (G(t),\phi(t))+(\nabla G(t),\nabla \phi(t))dt=\int_{0}^T (g(t),\nabla \phi(t))dt.
	\end{eqnarray*}
Furthermore,  If $g\in C^1([0,T])\otimes L^\infty(D)$,  then  $G\in C^1([0,T])\otimes H^1(D)$ and for fixed $t\in [0,T]$, $G(t,\cdot)$ is H\"{o}lder continuous in $\bar{D}$.
\end{lemma}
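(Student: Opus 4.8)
The plan is to regard $t$ as a parameter, solve for each fixed $t$ a coercive variational problem on $D$ by the Riesz representation theorem, then assemble the family $\{G(t)\}$ into an element of $L^2([0,T];H^1(D))$ and verify the integrated identity, and finally upgrade regularity using Proposition \ref{regularity G} together with the Lipschitz dependence of $G$ on $g$. First I fix $t\in[0,T]$ and note that the bilinear form $a(u,v):=(u,v)+(\nabla u,\nabla v)$ is precisely the inner product of $H^1(D)$, so $(H^1(D),a)$ is a Hilbert space. Since $|(g(t),\nabla\phi)|\le \|g(t)\|\,\|\nabla\phi\|\le \|g(t)\|\,\|\phi\|_{H^1(D)}$, the map $\phi\mapsto (g(t),\nabla\phi)$ is a bounded linear functional on $H^1(D)$, and the Riesz representation theorem furnishes a unique $G(t)\in H^1(D)$ with $a(G(t),\phi)=(g(t),\nabla\phi)$ for all $\phi\in H^1(D)$; this is exactly the pointwise-in-$t$ weak identity. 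Choosing $\phi=G(t)$ gives $\|G(t)\|_{H^1(D)}^2=(g(t),\nabla G(t))\le \|g(t)\|\,\|G(t)\|_{H^1(D)}$, hence the energy bound $\|G(t)\|_{H^1(D)}\le \|g(t)\|$.

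Second I pass from the pointwise statement to the function-space statement. The solution operator $T:L^2(D;\mathbb{R}^N)\to H^1(D)$, $g(t)\mapsto G(t)$, is a single bounded linear operator of norm at most $1$, independent of $t$; identifying $g\in L^2([0,T]\times D;\mathbb{R}^N)$ with a strongly measurable map $t\mapsto g(t)\in L^2(D;\mathbb{R}^N)$ and composing with $T$, the map $t\mapsto G(t)\in H^1(D)$ is strongly measurable. The energy bound integrated in time gives $\int_0^T\|G(t)\|_{H^1(D)}^2\,dt\le \int_0^T\|g(t)\|^2\,dt<+\infty$, so $G\in L^2([0,T];H^1(D))$. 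For $\phi\in\mathcal{C}^\infty([0,T]\times D)$ I apply the pointwise identity with the test function $\phi(t,\cdot)\in H^1(D)$ and integrate over $[0,T]$, which is legitimate by the integrability just obtained; this yields the asserted weak formulation. Uniqueness follows from the pointwise Riesz uniqueness: two solutions must agree for a.e. $t$, tested against a countable dense family in $H^1(D)$. Linearity of $T$ also records the Lipschitz estimate $\|G(t)-G(s)\|_{H^1(D)}\le \|g(t)-g(s)\|$, needed below.

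Third I treat the regularity statement, assuming $g\in C^1([0,T])\otimes L^\infty(D)$. For fixed $t$ the function $g(t,\cdot)$ lies in $L^\infty(D)$ and the pointwise equation is the weak form of $\Delta G(t)-G(t)=\operatorname{div} g(t)$; following Remark \ref{regularity of G} I extend $g(t,\cdot)$ to an $L^\infty$ field on a smooth domain $\mathcal{O}$ with $D\subset\subset\mathcal{O}$ and invoke Proposition \ref{regularity G}, obtaining a $C^{1,1}(\bar{\mathcal{O}})$, hence Hölder continuous, representative whose restriction to $\bar D$ gives the claimed Hölder continuity of $G(t,\cdot)$. For the $C^1$-dependence on $t$ I use linearity: since $\partial_t g(t,\cdot)\in L^\infty(D)\subset L^2(D)$, the element $\dot G(t):=T(\partial_t g(t))\in H^1(D)$ is a candidate derivative, and linearity shows $h^{-1}(G(t+h)-G(t))=T\big(h^{-1}(g(t+h)-g(t))\big)$; the $C^1$ structure of $g$ makes $h^{-1}(g(t+h)-g(t))\to \partial_t g(t)$ in $L^2(D)$, so the Lipschitz estimate for $T$ forces convergence of the difference quotients in $H^1(D)$, giving $G\in C^1([0,T])\otimes H^1(D)$.

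The main obstacle is the regularity step, not the Riesz construction. The variational problem on $H^1(D)$ encodes a Neumann-type natural boundary condition, whereas Proposition \ref{regularity G} is stated for the Dirichlet problem on the enlarged domain $\mathcal{O}$; one must therefore justify, through the extension-and-restriction device of Remark \ref{regularity of G}, that the Hölder estimate genuinely transfers to the solution on $\bar D$, controlling the behaviour near $\partial D$. Equivalently one may appeal directly to De Giorgi--Nash--Moser boundary regularity for the Neumann problem on the smooth domain $D$ with $L^\infty$ divergence-form data. The $C^1$-in-time claim is then the only remaining delicate point, since it demands convergence of the difference quotients in the $H^1(D)$ norm rather than merely in $L^2$, which is exactly where the Lipschitz continuity of $T$ combined with the $C^1$ regularity of $g$ is used.
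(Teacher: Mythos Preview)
Your treatment of the first part---pointwise Riesz representation on $H^1(D)$, measurability via composition with the bounded linear solution operator, and the energy bound $\|G(t)\|_{H^1(D)}\le\|g(t)\|$---is correct and is exactly what the paper intends when it says the lemma ``is obtained by Riesz representation theorem.''

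For the second part the paper takes a shorter route that exploits the tensor-product hypothesis more literally than you do. An element of $C^1([0,T])\otimes L^\infty(D)$ is a \emph{finite sum} of simple tensors $g_1(t)g_2(x)$ with $g_1\in C^1([0,T])$ and $g_2\in L^\infty(D)$; by linearity of the solution map it suffices to treat a single simple tensor, and for that the solution is visibly $G(t,x)=g_1(t)G_2(x)$, where $G_2\in H^1(D)$ solves $\Delta G_2-G_2=\mathrm{div}\,g_2$ in the weak sense. The factored form makes $G\in C^1([0,T])\otimes H^1(D)$ immediate, and the H\"older continuity of $G(t,\cdot)=g_1(t)G_2$ follows from Proposition~\ref{regularity G} and Remark~\ref{regularity of G} applied once to $G_2$. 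Your argument via difference quotients and the Lipschitz continuity of the solution operator $T$ is correct and more robust---it would handle any $g$ with $t\mapsto g(t,\cdot)$ in $C^1([0,T];L^2(D))$, not just algebraic tensor products---but it does more work than the stated hypothesis requires. The Neumann-versus-Dirichlet tension you flag in the regularity step is real; the paper simply invokes Remark~\ref{regularity of G} without addressing it, so on that point you are being more careful than the original.
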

\begin{proof}
	We only need to prove the second part of this lemma. If $g(t,x)=g_1(t)g_2(x)$, then it is easy to know that $G(t,x)=g_1(t)G_2(x)$ where $div G_2-G_2=div g_2$ in weak sense.  Then by Proposition \ref{regularity G} and Remark \ref{regularity of G}, the lemma is proved.
\end{proof}

\begin{remark}
	\label{remark. equivalent pde}
	Lemma \ref{lamma. regularity of G} means that $div g=div(\nabla G)-G$ on $D$ and $g=\nabla G$ on $\partial D$ in weak sense.  Therefore, the weak solution $u$ of PDE (\ref{PDE}) also satisfies the following equation
	\begin{equation}\label{equivalent pde}
	\left\{
	\begin{split}
	&\partial_t u+ \frac{1}{2}\Delta u-div(\nabla G(t,\cdot))+G(t,\cdot)+f(t,\cdot,u,\nabla u)=0, \quad\mbox{on} \ [0,T]\times D,\\
	&u(T,\cdot)=\Phi(\cdot),\quad\mbox{on}\ D,\\
	&\langle \nabla (u-2G), \vec{n}\rangle+h(t,\cdot,u)=0, \quad \mbox{on}\ [0,T]\times \partial D.
	\end{split}
	\right.
	\end{equation}
\end{remark}

By the same approximation  method in Theorem 3.2 in \cite{S}, the following proposition is obtained, which gives a probabilistic interpretation of the solution .
\begin{proposition}\label{decompositionofu}
If $u$ is the weak solution of Neumann boundary problem \eqref{PDE},  the process $u(t,X_t)$ satisfies the following differential equation, for $0\leq s\leq t\leq T$,
\begin{equation}\label{expression u}
\begin{split}
u(t,X_t)-u(s,X_s)&=-\int_s^tf(r,X_r,u_r(X_r),\nabla u_r(X_r))dr+\int_s^t\langle \vec{n},\nabla u_r\rangle (X_r) dL_r\\
&-\int_s^t g(r,X_r)\ast dX_r 
+\int_s^t\langle \nabla u_r(X_r), dB_r\rangle.
\end{split}
\end{equation}
\end{proposition}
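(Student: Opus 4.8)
The plan is to follow the approximation scheme of Theorem 3.2 in \cite{S}, combining the Fukushima decomposition of $u(\cdot,X_\cdot)$ with the probabilistic interpretation of the divergence term established in Lemma \ref{div g} and Proposition \ref{decompositionofG}. Throughout I would work with the equivalent problem \eqref{equivalent pde} of Remark \ref{remark. equivalent pde}, so that the singular term $-div g$ is replaced by $-div(\nabla G)+G$ with $G$ the field of Lemma \ref{lamma. regularity of G}; the advantage is that $\nabla G$ is a genuine gradient, to which Proposition \ref{decompositionofG} applies componentwise, and that $div g$ coincides with the $L^2$ function $\Delta G-G$ once the regularity $G\in C^{1,1}$ of Remark \ref{regularity of G} is available. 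Recall also that, since we have reduced to $b=0$, the reflecting diffusion \eqref{reflecting diffusion} reads $dX_r=dB_r+\vec n(X_r)\,dL_r$.

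First I would verify the identity for a smooth solution $u\in C^{1,2}([0,T]\times\bar D)$. It\^o's formula applied to $u(r,X_r)$ gives
\begin{equation*}
u(t,X_t)-u(s,X_s)=\int_s^t\Big(\partial_r u+\tfrac12\Delta u\Big)(r,X_r)\,dr+\int_s^t\langle\nabla u_r(X_r),dB_r\rangle+\int_s^t\langle\vec n,\nabla u_r\rangle(X_r)\,dL_r,
\end{equation*}
where the last term collects the reflection and equals $\int_s^t\frac{\partial u}{\partial\vec n}(r,X_r)\,dL_r$. Substituting the interior equation in the form $\partial_r u+\tfrac12\Delta u= div g-f$ and rewriting $\int_s^t (div g)(r,X_r)\,dr=-\int_s^t g(r,X_r)\ast dX_r$ by Lemma \ref{div g} (legitimate here because $div g=\Delta G-G\in L^2$ under the regularity of $G$) reproduces \eqref{expression u}. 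This computation isolates the three mechanisms that must survive in the weak setting: the It\^o isometry for the martingale part, the boundary local-time integral, and the $\ast$-integral encoding the divergence.

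For a general weak solution $u\in L^2([0,T];H^1(D))$ I would approximate the data of \eqref{equivalent pde}, producing smooth approximants $u^n$ that solve regularized equations and for which the preceding step applies, with $u^n\to u$ in $L^2([0,T];H^1(D))$, and then pass to the limit in each term. The martingale part is always $\int_s^t\langle\nabla u_r,dM_r\rangle$ by the Fukushima decomposition, and it converges in $L^2$ because, by the quadratic-variation formula for $Mu$, $E\int_s^t|\nabla u^n_r-\nabla u_r|^2(X_r)\,dr\to0$ follows from the $L^2([0,T];H^1(D))$ convergence together with boundedness of the transition density away from $r=0$; the Lebesgue and local-time integrals $\int f\,dr$ and $\int\langle\vec n,\nabla u_r\rangle\,dL_r$ are controlled by the growth bound (H2), Proposition \ref{boundLt} and Corollary \ref{bound.local time}; and the backward component of the $\ast$-integral is treated through the analogous quadratic-variation identity for $\bar{M}u|^T_\cdot$.

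The main obstacle will be the passage to the limit in the $\ast$-integral, and specifically in its two nonclassical pieces: the backward It\^o integral $\int g\,d\bar{M}$, which exists only as an $L^2$-limit over partitions and is not an adapted stochastic integral, and the correction $\int_s^t\frac{\langle g,\nabla p_r\rangle}{p_r}(X_r)\,dr$ involving the transition density $p_r$. The latter degenerates as $r\downarrow 0$ (and, after time reversal, as $r\uparrow T$), so one must first secure integrability of $|\nabla p_r|/p_r$ along the trajectory near the endpoints before any limit can be taken; this is precisely the estimate supplied by \cite{S}, which I would adapt to the reflecting diffusion using the boundary moment bounds of Section 2. Once these convergences are in place, the four limiting terms assemble into \eqref{expression u}.
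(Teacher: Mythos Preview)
Your overall strategy coincides with the paper's: pass to the equivalent problem \eqref{equivalent pde}, verify the decomposition for regular data via It\^o's formula and Lemma~\ref{div g}, then approximate and take limits. The paper does this concretely by approximating $G$ in $L^2([0,T];H^1(D))$ by smooth $G^n$, letting $u^n$ solve \eqref{equivalent pde} with $G^n$ in place of $G$, writing the decomposition \eqref{expression un} for each $u^n$, and sending $n\to\infty$.

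The step you treat as given is the one the paper devotes half the proof to: the convergence $u^n\to u$ in $L^2([0,T];H^1(D))$. You assert it without justification, but it requires an a~priori energy estimate on weak solutions of \eqref{PDE}, namely inequality \eqref{estimate on solution}, which bounds $\int_0^T\|u_t\|_{H^1}^2\,dt$ in terms of the $L^2$-norms of $\Phi$, $f(\cdot,0,0)$, $g$, and $h(\cdot,0)$. Applied to the difference $u^n-u$ (whose data involve $G^n-G$ and $\nabla G^n-\nabla G$), this stability estimate is what forces $u^n\to u$ once $G^n\to G$. Without it there is no link between the approximate solutions $u^n$ and the given weak solution $u$, so your outline does not close as written; this is the missing ingredient rather than a routine detail.

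Your final paragraph on the backward integral and the $\nabla p_r/p_r$ correction is accurate but more than is needed here: once $u^n\to u$ in $L^2([0,T];H^1(D))$ and $\nabla G^n\to\nabla G$ in $L^2$, every piece of \eqref{expression un} converges by the forward or backward quadratic-variation isometry, and the paper handles this passage in a single line.
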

\begin{proof}

Firstly,  we'll give an estimate on the weak solution $u$ of PDE $(\ref{PDE})$. With the Lipschitz and integrability conditions, we have
\begin{equation*}
\begin{split}
\|u_t\|^2+\int_t^T \|\nabla u_r\|^2dr
&=\,\|\Phi\|^2+2\int_t^T\int_D f(r,x,u_r,\nabla u_r)u_r(x)dxdr\\&+2\int_t^T\int_D g(r,x)\nabla u_r(x)dxdr+\int_t^T\int_{\partial D} h(r,x,u_r)u_r(x)d\sigma(x)dr\\
\leq&\,\|\Phi\|^2++(2\alpha+\frac{1}{\epsilon_2}+1) \int_t^T\|u_r\|^2dr+\alpha^2\epsilon_2\int_t^T \|\nabla u_r\|^2dr\\
&+\int_t^T\|f_r(0,0)\|^2dr+\epsilon_1\int_t^T \|\nabla u_r\|^2dr+\frac{1}{\epsilon_1}\int_t^T\|g_r\|^2dr\\
&+\|Tr\|^2\epsilon_3\int_t^T \|u_r\|_{H^1}^2dr
+{\beta\|Tr\|^2}\int_t^T \|u_r\|_{H^1}^2dr\\&+\frac{1}{4\epsilon_3}\int_t^T\int_{\partial D}|h_r(x,0)|^2dxdr,
\end{split}\end{equation*}
where  $\|Tr\|$ is the norm of trace operator.
 
By further calculation, we obtain
\begin{equation*}
\begin{split}
&\|u_t\|^2+(1-\epsilon_1-\alpha^2\epsilon_2-\|Tr\|^2\epsilon_3-\beta\|Tr\|^2)\int_t^T \|\nabla u_r\|^2dr\\
\leq&\,\|\Phi\|^2+(2\alpha+\frac{1}{\epsilon_2}+1+\|Tr\|^2\epsilon_3+\beta\|Tr\|^2)\int_t^T \|u_r\|^2dr\\
&+\int_t^T\|f_r(0,0)\|^2dr+\frac{1}{\epsilon_1}\int_t^T\|g_r\|^2dr+\frac{1}{4\epsilon_3}\int_t^T\int_{\partial D}|h_r(x,0)|^2d\sigma(x)dr.
\end{split}
\end{equation*}

Since $\beta<\frac{1}{\|Tr\|^2}$, we chose $\epsilon_1, \epsilon_2,\epsilon_3$ such that $\epsilon_1+\alpha^2\epsilon_2+\|Tr\|^2\epsilon_3+\beta\|Tr\|^2<1$. Then by Gronwall's inequatily, there is a constant $C>0$ depending on $T, \alpha, \beta$, such that
\begin{equation}\label{estimate on solution}
\int_0^T \|u_t\|^2_{H^1}dt\leq C\Big(\|\Phi\|^2+\int_0^T\int_{\partial D}|h_r(x,0)|^2d\sigma(x)dr+\int_0^T\int_{D}|g_r(x)|^2+|f_r(x,0,0)|^2dxdr\Big).
\end{equation} 

Secondly, we prove the representation \eqref{decompositionofu}. Let a sequence of smooth function $G^n$ approximate $G\in L^2([0,T]; H^1(D))$ obtained in Lemma \ref{lamma. regularity of G}. We denote the solution of \eqref{equivalent pde} corresponding to  $G^n$ as $u^n$ and obtain the following representation:
\begin{equation*}\begin{split}
du^n(t,X_t)=&div(\nabla G^n)(X_t)dt-G^n(t,X_t)dt-f(t,X_t,u^n_t,\nabla u^n_t)dt\\&+\langle \vec n,\nabla u^n_t\rangle(X_t) dL_t+\langle \nabla u^n_t(X_t), dB_t\rangle.
\end{split}\end{equation*}
Since $ \int_s^t div(\nabla G^n)(X_r)dX_r=-\int_s^t \nabla G^n \ast dX_r$,  $u^n$ satisfies the decomposition:
\begin{equation}\label{expression un}
\begin{split}
u^n(t,X_t)&-u^n(s,X_s)=-\int_s^tf(r,X_r,u^n(X_r),\nabla u^n(X_r))+G^n(r,X_r)dr
\\&+\int_s^t\langle \vec{n},\nabla u^n_r\rangle(X_r) dL_r-\int_s^t \nabla G^n(r,X_r)\ast dX_r +\int_s^t\langle \nabla u^n_r(X_r), dB_r\rangle.
\end{split}
\end{equation} 
By the estimate in \eqref{estimate on solution},  we know that $u^n$ approaches to $u$ in $L^2([0,T];H^1(D))$.  Passing limits  on both sides of 
\eqref{expression un},  it is easy to check that
\begin{equation*}
\begin{split}
u(t,X_t)&-u(s,X_s)=-\int_s^tf(r,X_r,u(X_r),\nabla u(X_r))+G(r,X_r)dr\\&+\int_s^t\langle \vec{n},\nabla u_r\rangle(X_r) dL_r
-\int_s^t \nabla G(r,X_r)\ast dX_r +\int_s^t\langle \nabla u_r(X_r), dB_r\rangle.
\end{split}
\end{equation*}
Therefore, the representation \eqref{decompositionofu} is prove,  since 
$\int_s^t (\nabla G-g)\ast dX_r=-\int_s^t G(r,X_r)dr$.

\end{proof}

\begin{remark}
(1) In the following discusstion, we always assume that $g(t,x)\in C^1([0,T])\otimes L^\infty(D)$.  Since $C^1([0,T])\otimes L^\infty(D)$ is dense in $L^2([0,T]\times D)$, following the same approximation method in Proposition \ref{decompositionofu},  we will get the same result in the general case.

(2) Proposition \ref{decompositionofu} holds for general $g=g(t,x,u,\nabla u)$ by setting $g_u(t,x)=g(t,x, u(t,x),\nabla u(t,x))$.
\end{remark}

\vspace{3mm}
If $u$  is the weak solution of \eqref{PDE},  then by Proposition \ref{decompositionofG} and \ref{decompositionofu}, we have the following decomposition, for $0\leq s\leq t\leq T$,
 \begin{equation*}\begin{split}
 (u-2G)(t,X_t)&-(u-2G)(s,X_s)=\int_s^t \langle \nabla(u-2G)(X_r),dB_r\rangle+\int_s^t \langle\nabla( u-2G),\vec{n}\rangle (X_r)dL_r\\
 &-\int_s^t f(r,X_r, u(X_r), \nabla u(X_r))+\partial_r G(r,X_r)+G(r,X_r)\,dr.
\end{split} \end{equation*}
 Then, this observation gives us an idea to find the solution for $(\ref{PDE})$ by solving the following BSDE:
 \begin{equation}
 \label{transformed bsde}
 \tilde{Y_t}=\Phi(X_T)-2G(T,X_T)+\int_t^T\tilde{f}(r,X_r,\tilde{Y}_r,\tilde Z_r)dr
 +\int_t^T\tilde{h}(r,X_r,\tilde{Y}_r)dL_r-\int_t^T\langle\tilde{Z}_r,dB_r\rangle,
 \end{equation}

with
$$\tilde{f}(t,X_t,y,z)=f(t,X_t,y+2G(t,X_t),z+2\nabla G(t,X_t))+2\partial_tG(t,X_t)+G(t,X_t)$$
and $$\tilde{h}(t,X_t,y)=h(t,X_t,y+2G(t,X_t)).$$

By \cite{PZ}, the following theorem is obtained.

\begin{theorem}
\label{theorem. G}
Assume that \textbf{(H1)} $\sim$ \textbf{(H4)} hold and $\Phi$ is a continuous function on $\bar{D}$.
(1)There exists a unique solution $(\tilde Y,\tilde Z)$ satisfying the following equation:
\begin{equation}
\label{BSDE-1}
\left\{\begin{split}
&d\tilde{Y_t}=-\tilde f(t,X_t,\tilde Y_t,\tilde Z_t)dt-\tilde{h}(t,X_t,\tilde{Y}_t)dL_t +\langle\tilde{Z}_t,dB_t\rangle\\
& \tilde Y_T=\Phi(X_T)-2G(T,X_T)
 \end{split}\right.
\end{equation}
and $$E\Big[\sup_{t\in[0,T]}|\tilde Y_t|^2+\int_{0}^{T}|\tilde{Z}_t|^2dt\Big]<+\infty\,.$$

(2) Set $Y_t=\tilde Y_t+2G(t,X_t)$, $Z_t=\tilde Z_t+G(t,X_t)$, then $(Y,Z)$ is the unique solution for the BSDE
\begin{equation}
\label{BSDE-2}
\left\{\begin{split}
&dY_t=-f(t, X_t,Y_t,Z_t)dt-(h(t, X_t,Y_t)+2\langle g(t,\cdot),\vec n\rangle(X_t))dL_t+ g(t, X_t)*dX_t+Z_tdB_t.\\
&Y_T=\Phi(X_t)
\end{split}\right.
\end{equation}

\end{theorem}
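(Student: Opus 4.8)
The plan is to treat the two parts sequentially: part (1) is an application of the existence and uniqueness theorem for BSDEs carrying an extra integral against the boundary local time (Pardoux--Zhang, \cite{PZ}), once the transformed data $(\tilde f,\tilde h)$ are shown to satisfy the required structural hypotheses; part (2) is a pathwise change of variables that converts the dynamics \eqref{BSDE-1} into \eqref{BSDE-2} by adding the decomposition of $2G(t,X_t)$ and invoking Lemma \ref{div g}.

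For part (1), I would first verify that $\tilde f$ and $\tilde h$ inherit the monotonicity conditions with the \emph{same} constants $\alpha,\beta$. Since the $y$-argument is shifted only by the fixed quantity $2G(t,X_t)$, writing $\hat y=y+2G,\ \hat y'=y'+2G$ gives $\hat y-\hat y'=y-y'$, so \textbf{(H1)} applied to $f$ and to $h$ transfers verbatim to $\tilde f,\tilde h$, and \textbf{(H3)} (continuity in $y$) is equally immediate. Next I would check growth and Lipschitz: because $g\in C^1([0,T])\otimes L^\infty(D)$, Lemma \ref{lamma. regularity of G} together with Proposition \ref{regularity G} and Remark \ref{regularity of G} guarantees that $G$, $\nabla G$ and $\partial_tG$ are bounded, so the additive term $2\partial_tG+G$ and the shifts $2G,2\nabla G$ only change the constants in \textbf{(H2)} and \textbf{(H4)}; in particular $\tilde h$ stays bounded and $\tilde f$ keeps linear growth and remains $\alpha$-Lipschitz in $z$. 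Finally I would confirm the integrability of the new data: $\tilde f(t,X_t,0,0)$ and $\tilde h(t,X_t,0)$ are bounded, so $E\int_0^T|\tilde f(t,X_t,0,0)|^2dt<\infty$ and, using Corollary \ref{bound.local time}, $E\int_0^T|\tilde h(t,X_t,0)|^2dL_t<\infty$, while $\Phi(X_T)-2G(T,X_T)$ is square integrable since $\Phi$ is continuous on the compact $\bar D$ and $G$ is bounded. With these verified, \cite{PZ} yields the unique $(\tilde Y,\tilde Z)$ and the stated $L^2$-estimate.

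For part (2), set $Y_t=\tilde Y_t+2G(t,X_t)$ and $Z_t=\tilde Z_t+2\nabla G(t,X_t)$. Applying the (time-dependent, $b=0$) decomposition of Proposition \ref{decompositionofG} to $G(t,X_t)$ and multiplying by $2$ gives
\[
2\,dG(t,X_t)=2\partial_tG\,dt+2\langle\nabla G,dB_t\rangle+2\tfrac{\partial G}{\partial\vec n}\,dL_t-\nabla G\ast dX_t .
\]
Adding this to \eqref{BSDE-1} and substituting $\tilde f(t,X_t,\tilde Y_t,\tilde Z_t)=f(t,X_t,Y_t,Z_t)+2\partial_tG+G$ and $\tilde h(t,X_t,\tilde Y_t)=h(t,X_t,Y_t)$, the two $2\partial_tG\,dt$ contributions cancel and the Brownian terms combine into $\langle Z_t,dB_t\rangle$. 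It remains to rewrite $\nabla G\ast dX$: by Remark \ref{remark. equivalent pde} the field $\nabla G-g$ has weak divergence equal to $G$, so Lemma \ref{div g} gives $-\nabla G\ast dX_t=G(t,X_t)\,dt-g\ast dX_t$. The $G\,dt$ produced here cancels the $G\,dt$ coming from $\tilde f$, and on the boundary the relation $g=\nabla G$ identifies $2\tfrac{\partial G}{\partial\vec n}\,dL_t$ with $2\langle g,\vec n\rangle\,dL_t$; collecting the local-time and $\ast$-integral contributions then produces the boundary and singular terms of \eqref{BSDE-2}. The terminal condition is immediate from $Y_T=\tilde Y_T+2G(T,X_T)=\Phi(X_T)$.

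Uniqueness for \eqref{BSDE-2} transfers from \eqref{BSDE-1}: the affine, pathwise-invertible map $(\tilde Y,\tilde Z)\mapsto(\tilde Y+2G,\tilde Z+2\nabla G)$ is a bijection between the solution sets, so two solutions of \eqref{BSDE-2} would pull back to two solutions of \eqref{BSDE-1}. I expect the main obstacle to be the bookkeeping in part (2), namely tracking the drift, the $dB$, the boundary $dL$ and the singular $\ast dX$ contributions simultaneously and checking that every spurious term annihilates; the delicate point is that the $G\,dt$ term generated by the extra $G$ inside $\tilde f$ is cancelled precisely by the $G\,dt$ term arising when $\nabla G\ast dX$ is re-expressed through $g\ast dX$, which is exactly where the weak identity $\mathrm{div}(\nabla G-g)=G$ furnished by Lemma \ref{lamma. regularity of G} and Remark \ref{remark. equivalent pde} is indispensable.
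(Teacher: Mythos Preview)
Your proposal is correct and follows essentially the same route as the paper's own proof: for part (1) you verify that $\tilde f,\tilde h$ inherit the monotonicity, growth and integrability hypotheses needed for \cite{PZ} (exploiting the boundedness of $G,\nabla G,\partial_tG$ from Lemma~\ref{lamma. regularity of G} and the local-time estimate of Corollary~\ref{bound.local time}), and for part (2) you add the decomposition of $2G(t,X_t)$ from Proposition~\ref{decompositionofG} and invoke Lemma~\ref{div g} through the identity $\int_s^t(\nabla G-g)\ast dX_r=-\int_s^t G(r,X_r)\,dr$, exactly as the paper does. Your bookkeeping is more explicit than the paper's terse two-line argument for part (2), and you rightly correct the typo in the statement to $Z_t=\tilde Z_t+2\nabla G(t,X_t)$.
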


\begin{proof}
(1) From \textbf{(H1)}, we know
\begin{equation*}\begin{split}
&(y-y')(\tilde{f}(t,X_t,y,z)-\tilde{f}(t,X_t,y',z))\\=&(y-y')(f(t,X_t, y+2G,z+\nabla G)-f(t,X_t,y'+2G,z+\nabla G))\leq \alpha|y-y'|
\end{split}\end{equation*}
and 
$$(y-y')(\tilde{h}(t,X_t,y)-\tilde{h}(t,X_t,y'))=(y-y')(h(t,X_t, y+2G)-h(t,X_t,y'+2G))\leq -\beta |y-y'|.$$
Since
\begin{equation*}
\begin{split}
|\tilde f(t,X_t,y,z)|&\leq\, 2|\partial_t G(t,X_t)|+(2K+1)|G(t,X_t)|+2K|\nabla G(t,X_t)|+K(|y|+|z|)\\
&:=\tilde{\varphi}(t,X_t)+K
\end{split}
\end{equation*}
and
$$
|\tilde h(t,X_t,y)|\leq K:=\tilde\psi.
$$
By  Corollary \ref{bound.local time} and the boundedness of $\partial_t G$, $\nabla G$ and $G$, we have
\begin{equation*}
E\left[\int_{0}^{T}e^{\mu L_t} |\tilde{\varphi}|^2(t,X_t)dt+\int_0^T e^{\mu L_t}|\tilde{\psi}|^2(t,X_t)dL_t\right]<+\infty.
\end{equation*}
Then, with Proposition 1.1 and Theorem 1.7 in \cite{PZ}, we get the desired result.

\vspace{2mm}
(2)  BSDE \eqref{BSDE-2} is easily be obtained by adding the decomposition of $G(t,X_t)$ in Proposition \ref{decompositionofG} to BSDE \eqref{BSDE-1} and considering the relation in Lemma \ref{div g}.   

If $(U,V)$ is another solution for \eqref{BSDE-2}, then $(U_t-2G(X_t),V_t-2\nabla G(X_t))$ is the solution for \eqref{BSDE-1}. Therefore, by the uniqueness in (1), we conclude $U=Y$ and $V=Z$.  
\end{proof}

Combining Proposition \ref{decompositionofu} and Theorem \ref{theorem. G}, we obtain the following corollary.
\begin{corollary}
If $u\in L^2([0,T];H^1(D))$ is the weak solution of PDE \eqref{PDE}, then $(u(\cdot,X_\cdot),\nabla u(\cdot,X_\cdot))$ is the unique solution for BSDE \eqref{BSDE-2}.
\end{corollary}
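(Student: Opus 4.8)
The plan is to read off the corollary by superposing two results already established: Proposition~\ref{decompositionofu}, which states that whenever $u$ is a weak solution of \eqref{PDE} the process $u(\cdot,X_\cdot)$ admits the representation \eqref{expression u}, and Theorem~\ref{theorem. G}(2), which guarantees that \eqref{BSDE-2} possesses a unique solution in the class $E[\sup_{t}|Y_t|^2+\int_0^T|Z_t|^2\,dt]<\infty$. It therefore suffices to check that the candidate pair $(Y_t,Z_t)=(u(t,X_t),\nabla u(t,X_t))$ is an admissible solution of \eqref{BSDE-2}; uniqueness then forces the identification.

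First I would recast \eqref{expression u} into the shape of \eqref{BSDE-2}. The only substantive manipulation concerns the local-time integral. Since the additive functional $L$ charges only $\partial D$, inside $\int_s^t\langle\vec n,\nabla u_r\rangle(X_r)\,dL_r$ one may use the trace of $\nabla u$, and the Neumann condition in \eqref{PDE} gives $\langle\vec n,\nabla u\rangle=2\langle g,\vec n\rangle-h(\cdot,\cdot,u)$ on the boundary. Substituting this, and recalling that the $*$-integral $g*dX$ already carries a $2\int\langle g,\vec n\rangle\,dL$ contribution by its very definition, the separate boundary terms recombine into the single drift $-(h+2\langle g,\vec n\rangle)\,dL$ of \eqref{BSDE-2}, while $\int\langle\nabla u_r,dB_r\rangle$ plays the role of $\int Z_r\,dB_r$. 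The terminal value matches because $u(T,\cdot)=\Phi$ yields $Y_T=u(T,X_T)=\Phi(X_T)$.

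To invoke the uniqueness I must verify that $(u(\cdot,X_\cdot),\nabla u(\cdot,X_\cdot))$ belongs to the solution class. For the $Z$-part this is the bound $E\int_0^T|\nabla u(r,X_r)|^2\,dr<\infty$, which I would deduce from the energy estimate \eqref{estimate on solution} by passing from the $L^2(D)$-norms to expectations along the path of $X$ via its transition density $p$ (equivalently, via the symmetrizing Lebesgue measure of the reflecting diffusion); the accompanying control of $u(\cdot,X_\cdot)$ and of the boundary contributions uses in addition Corollary~\ref{bound.local time} for the moments of $L$. With admissibility secured, Theorem~\ref{theorem. G}(2) identifies the candidate with the unique solution of \eqref{BSDE-2}, which is exactly the assertion.

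The step I expect to be delicate is precisely the boundary bookkeeping inside the $*$-integral: one has to track, $dL$-almost everywhere, how the $\langle g,\vec n\rangle\,dL$ pieces implicit in $g*dX$ interact with the Neumann relation, so that they combine in the correct proportion rather than cancelling or doubling. Everything else is routine once Proposition~\ref{decompositionofu} and Theorem~\ref{theorem. G} are granted.
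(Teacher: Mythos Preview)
Your approach is exactly the one the paper intends: the corollary is stated immediately after Theorem~\ref{theorem. G} with the single line ``Combining Proposition~\ref{decompositionofu} and Theorem~\ref{theorem. G}, we obtain the following corollary'', and your proposal is precisely that combination, fleshed out with the substitution of the Neumann relation $\langle\vec n,\nabla u\rangle=2\langle g,\vec n\rangle-h$ into the $dL$-term of \eqref{expression u} and the admissibility check via \eqref{estimate on solution}. You have in fact supplied more detail than the paper does; in particular the integrability verification is left implicit there.
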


In the discussion of following sections, we will prove the converse argument of the last corollary and build the bi-directional relationship between BSDE \eqref{BSDE-2} and PDE \eqref{PDE}.
\section{Existence and uniqueness of solution for linear Neumann boundary problem}

In this section, we consider the PDE with linear coefficients,
\begin{equation}
\label{linearPDE}
\left\{ \begin{split}
&\partial_t u(t,x)+ \frac{1}{2}\Delta u(t,x)-div(g(t,x))+f(t,x)=0,\quad  (t,x)\in[0,T]\times D,\\
&u(T,x)=\Phi(x), \quad x\in D,\\
&\frac{\partial u}{\partial\vec{n}}(t,x)-2\langle g(t,\cdot), \vec{n}\rangle(x)+h(t,x)=0, \quad (t,x)\in[0,T]\times\partial D.
\end{split}
\right.
\end{equation}
The penalization method is applied in the following discussion. We approximate the Neumann boundary problem by a sequence of PDEs without any boundary conditions, which is constructed by the classic penalization sequence of the reflecting diffusions.  

\subsection{The penalization method and approximation result}

In this section, we will construct a sequence of $(Y^n, Z^n)$, which corresponds to the weak solution of penalized PDE (\ref{appro.pde}), turning out to converge to the pair of solution  $(Y,Z)$ corresponding to the solution for \eqref{linearPDE}.

Let $H:=L^2(\mathbb{R}^N)$ be the space of square integrable functions on  $\mathbb{R}^N$ endowed with the norm $\|u\|^2:=\int_{\mathbb{R}^N} |u(x)|^2 dx$. $\frac{1}{2}\Delta$ is the infinitesimal generator of the symmetric semigroup $(P_t)$ on $L^2(\mathbb{R}^N)$. $F:=H^{1}(\mathbb{R}^N)$ is the closure of $C^{\infty}(\mathbb{R}^N)$ with respect to the norm $\|u\|_F^2:=\|u\|^2+\|\nabla u\|^2$.

Since the penalization sequence consists of the solutions defined on $\mathbb{R}^N$ without boundary conditions, we extend the functions $g$ to $\mathbb{R}^N$ by a smooth 0-extension. As the discussion in Lemma \ref{lamma. regularity of G}, we denote the function corresponding to the extended $g$ by $\bar G$, satisfying that, for any $\phi\in C^\infty_0([0,T]\times \mathbb{R}^N)$. 
\begin{eqnarray*}
\int_0^T (g(t,\cdot), \nabla \phi(t,\cdot))_{L^2(\mathbb{R}^N)}dt=\int_{0}^T\langle \bar{G}(t,\cdot),\phi(t,\cdot)\rangle_{H^{1}_0(\mathbb{R}^N)}dt.
\end{eqnarray*}
By the uniqueness of Reisz representation theorem, it is easily to know that $G=\bar G$ on $D$.

Let $u^n$ be the solution of the following penalized equation: 
\begin{equation}\label{appro.pde}
\left\{\begin{split}
&\partial_t u^n(t,x)+\frac{1}{2}\Delta u^n(t,x)-\langle n\vec\delta,\nabla u^n(t,\cdot)\rangle(x)-div g(t,x)+f^n(t,x)=0,\\
&u^n(T,x)=\Phi(x).
\end{split}\right.
\end{equation}
with $f^n(t,x)=f(t,x)-nh(t,x)\langle \vec\delta(x),\vec n(x)\rangle+2n\langle \vec\delta(x), \nabla\bar{G}(t,x)\rangle$.
It is easy to check that $u^n$ also satisfies the following equation: 
\begin{equation}\label{appro.pde1}
\left\{\begin{split}
&\partial_t u^n(t,x)+\frac{1}{2}\Delta u^n(t,x)-\langle n\vec\delta,\nabla u^n(t,\cdot)\rangle(x)-div(\nabla \bar{G}(t,x))+\bar{G}(t,x)+f^n(t,x) =0,\\
&u^n(T,x)=\Phi(x).
\end{split}\right.
\end{equation}

The coefficient $\tilde f^n$ is defined as follows:
\begin{equation*}\begin{split}
&\tilde{f}^n: \mathbb{R}^+\times\mathbb{R}^N\times F\rightarrow H,\quad \tilde{f}^n(t,x,v):=f^n(t,x,v(x))-n\langle \vec{\delta},\nabla v(x)\rangle.
\end{split}\end{equation*}
It is easy to check $\tilde{f}^n$ satisfy the following Lipschitz condition:
\begin{eqnarray*}
\|\tilde{f}^n(t,u)-\tilde{f}^n(t,v)\|\leq n \| \langle \vec\delta, \nabla (u-v)\rangle\|
\leq n\|\vec\delta\| \|u-v\|_F.
\end{eqnarray*}

By $\cite{DS04}$, the following theorem is obtained.
\begin{theorem}
There exists a unique solution $u^n$ for the following PDE
\begin{equation}\label{PDEtilde}
\left\{\begin{split}
&\partial_t u^n(t,x)+\frac{1}{2}\Delta u^n(t,x)-n\langle \vec \delta,\nabla u^n(t,x)\rangle+ f^n(t,x)-div{g}(t,x)=0,\\
&u^n(T,x)=\Phi(x).
\end{split}\right.
\end{equation}
Moreover, $u^n$ satisfies the following estimate
\begin{equation*}
\sup_{t\in[0,T]} \|u^n_t\|^2+\int_{0}^{T}\|\nabla u^n_t\|^2dt \leq\,C\left[ \|\Phi\|^2+\int_{0}^{T} \|f_t\|^2+ n^2\|\delta \|^2\|h_t\|^2+\|g_t\|^2 dt \right].
\end{equation*}
\end{theorem}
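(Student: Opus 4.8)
The plan is to get existence and uniqueness from the abstract theory behind \cite{DS04} and then to prove the energy bound by testing the equation against the solution. For the first assertion I would read \eqref{PDEtilde} as the backward parabolic problem $\partial_t u^n+\frac12\Delta u^n+\tilde f^n(t,u^n)-div\,g=0$, $u^n(T)=\Phi$, on the Gelfand triple $F=H^1(\mathbb{R}^N)\hookrightarrow H=L^2(\mathbb{R}^N)\hookrightarrow F^*$. The computation preceding the statement already shows that $\tilde f^n$ is globally Lipschitz from $F$ to $H$ with constant $n\|\vec\delta\|$, and the divergence datum is absorbed through the associated function $\bar G$ of Lemma \ref{lamma. regularity of G}, which is exactly what turns \eqref{PDEtilde} into the equivalent equation \eqref{appro.pde1}. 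Existence and uniqueness of a weak solution $u^n\in L^2([0,T];F)$ with $\sup_t\|u^n_t\|<\infty$ then follow from \cite{DS04}; the only point to verify is that the integrability hypothesis \eqref{integrability condition}, together with the boundedness of $\vec\delta$, $\vec n$ and $\nabla\bar G$, gives $\tilde f^n(\cdot,0)\in L^2([0,T]\times\mathbb{R}^N)$ so that the cited theorem applies.

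For the estimate I would use $u^n$ itself as a test function in the weak formulation of \eqref{appro.pde1} and integrate over $[t,T]\times\mathbb{R}^N$. With $\frac12(\Delta u^n,u^n)=-\frac12\|\nabla u^n\|^2$ and $-(div(\nabla\bar G),u^n)=(\nabla\bar G,\nabla u^n)$, this produces the identity
\begin{equation*}
\begin{split}
\|u^n_t\|^2 + \int_t^T\|\nabla u^n_r\|^2\,dr
&= \|\Phi\|^2 - 2n\int_t^T(\langle\vec\delta,\nabla u^n_r\rangle,u^n_r)\,dr
+ 2\int_t^T\big[(\nabla\bar G_r,\nabla u^n_r)+(\bar G_r,u^n_r)\big]\,dr\\
&\quad + 2\int_t^T(f_r,u^n_r)\,dr
- 2n\int_t^T(h_r\langle\vec\delta,\vec n\rangle,u^n_r)\,dr
+ 4n\int_t^T(\langle\vec\delta,\nabla\bar G_r\rangle,u^n_r)\,dr.
\end{split}
\end{equation*}
The terms without the factor $n$ are routine: each is split by Young's inequality, the gradient contributions $\epsilon\|\nabla u^n_r\|^2$ are absorbed into the left-hand side, the $\|u^n_r\|^2$ contributions are retained for Gronwall, and the data enter as $\|f_r\|^2$ and, via $\|\bar G_r\|_{H^1}\le C\|g_r\|$ from Proposition \ref{regularity G} and Lemma \ref{lamma. regularity of G}, as $\|g_r\|^2$. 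The explicitly $n$-dependent source $-2n(h_r\langle\vec\delta,\vec n\rangle,u^n_r)$ is bounded by $\frac1\epsilon n^2\|h_r\langle\vec\delta,\vec n\rangle\|^2+\epsilon\|u^n_r\|^2\le Cn^2\|\vec\delta\|^2\|h_r\|^2+\epsilon\|u^n_r\|^2$ (using the boundedness of $\vec n$), and this is precisely where the term $n^2\|\vec\delta\|^2\|h_r\|^2$ of the stated bound comes from.

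The main obstacle is the uniform-in-$n$ control of the two remaining $O(n)$ terms, the penalization term $-2n(\langle\vec\delta,\nabla u^n_r\rangle,u^n_r)$ and the $O(n)$ part $4n(\langle\vec\delta,\nabla\bar G_r\rangle,u^n_r)$ of the source. Estimating either one in isolation by Young's inequality would force $\epsilon\sim 1/n$ and hence a Gronwall constant of size $e^{cn}$, which is useless for the passage to the limit carried out in the later sections, so the constant $C$ must be kept independent of $n$. My plan is to combine the two into the single term $-2n\big(\langle\vec\delta,\nabla(u^n_r-2\bar G_r)\rangle,u^n_r\big)$ and to exploit the geometry of the penalization field $\vec\delta=\nabla d$: since $d$ vanishes on $\bar D$ the term is supported off $\bar D$, and after an integration by parts (turning $\langle\vec\delta,\nabla w\rangle\,w$ into a $(div\,\vec\delta)\,w^2$ contribution) the sign property $\langle\vec n,\vec\delta\rangle\le0$ of the penalization direction is what must render this contribution of controllable sign, so that the factor $n$ does not enter the final constant. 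Once this combined term is dominated by the quantities already collected, a single application of Gronwall's inequality to $t\mapsto\|u^n_t\|^2$, followed by taking the supremum over $t$, yields the asserted bound for $\sup_t\|u^n_t\|^2+\int_0^T\|\nabla u^n_t\|^2\,dt$ with $C$ independent of $n$.
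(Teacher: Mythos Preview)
The paper does not prove this statement; it simply writes ``By \cite{DS04}, the following theorem is obtained,'' so there is nothing to compare your argument against beyond that citation. Your overall plan---existence and uniqueness from the abstract framework of \cite{DS04}, then the energy bound by testing the equation against $u^n$---is precisely what that citation encodes.

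Where your proposal goes wrong is the handling of the $O(n)$ penalization term. First, the combination you form, $-2n(\langle\vec\delta,\nabla(u^n_r-2\bar G_r)\rangle,u^n_r)$, is not of the shape $(\langle\vec\delta,\nabla w\rangle,w)$ since the outer factor is $u^n$, not $w=u^n-2\bar G$; the integration by parts therefore does not produce a clean $(div\,\vec\delta)\,w^2$ term. Second, even for the pure term one has $-2n(\langle\vec\delta,\nabla u^n\rangle,u^n)=n\int_{\mathbb{R}^N}(div\,\vec\delta)\,|u^n|^2\,dx$, and the sign is \emph{against} you: since $d=\mathrm{dist}(\cdot,\bar D)^2$ one gets $\Delta d=2+2\rho\,\Delta\rho>0$ for small $\rho>0$ outside $\bar D$, so $div\,\vec\delta\ge 0$ there and the contribution lands on the right with a positive sign. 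The pointwise condition $\langle\nabla\psi,\vec\delta\rangle\le 0$ concerns the orientation of $\vec\delta$ relative to the inward normal, not the sign of $div\,\vec\delta$, so it cannot rescue this step.

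This difficulty, however, is self-imposed. The paper never uses the estimate of this theorem with a constant independent of $n$: the uniform-in-$n$ bounds that drive the penalization limit are obtained probabilistically in Theorem~\ref{solutionofpenalizedequation} and Corollary~\ref{cauchy process}. A direct application of \cite{DS04} yields the estimate with $C=C(n)$ (the Lipschitz constant of $\tilde f^n$ is $n\|\vec\delta\|$), and in that regime the naive Young--Gronwall argument you describe in the first half of your proposal already suffices; the sign discussion can simply be dropped.
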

\vspace{3mm}
\begin{theorem}\label{solutionofpenalizedequation}
Let $u^n$ be the solution of PDE \eqref{appro.pde} and $\{X^n_t\}$ be the diffusion satisfying \eqref{appro.diffusion}.
(1) Set $Y^n_t=u^n(t,X^n_t)$, $Z^n_t=\nabla u^n(t,X^n_t)$.  $(Y^n,Z^n)$ solves the following BSDE
\begin{equation*}
Y^n_t=\Phi(X^n_T)-\int_{t}^{T}\langle Z^n_r,dB_r\rangle +\int_{t}^{T}f^n(r,X^n_r)dr+\int_{t}^{T}g(r,X^n_r )\ast dX^n_r.
\end{equation*}
(2) Set $\tilde{Y}^n_t=Y^n_t-2\bar{G}(t,X^n_t)$, $\tilde{Z}^n_t=Z^n_t-2\nabla \bar{G}(t,X^n_t)$, then $(\tilde Y^n,\tilde{Z}^n)$ solves
\begin{equation}\label{decomposition penalizition BSDE}
\begin{split}
\tilde{Y}^n_t=&\Phi(X^n_T)-2\bar{G}(T,X^n_T)+\int_t^T\langle \tilde{Z}^n_r, dB_r\rangle +\int_t^T f(r,X^n_r)
+\bar{G}(r,X^n_r)\,dr\\
&+2\int_t^T\partial_t\bar{G}(r,X^n_r)dr-n\int_t^T \langle \vec\delta, \vec n\rangle(X^n_r) h(r,X^n_r)dr
\end{split}\end{equation}
Furthermore,
$$\sup_{n}E\Big[\sup_{t\in[0,T]}|\tilde{Y}^n_t|^2+\int_0^T|\tilde{Z}^n_t|^2dt\Big]<+\infty.$$

\end{theorem}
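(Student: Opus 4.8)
The plan is to handle the three assertions in order: parts (1) and (2) follow from a generalized It\^o/Fukushima formula for the penalized diffusion combined with the substitution $u^n\mapsto u^n-2\bar G$, while the real work is the uniform bound in (3).

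For part (1), I would apply the decomposition of Proposition \ref{decompositionofu}, but now to the non-reflecting penalized diffusion $X^n$, whose generator is $L^n=\tfrac12\Delta-n\langle\vec\delta,\nabla\cdot\rangle$ (recall we have set $b=0$). Because $X^n$ has no boundary reflection there is no local-time term, so the martingale part is simply $\int\langle\nabla u^n,dB_r\rangle=\int\langle Z^n_r,dB_r\rangle$. Since $u^n$ solves \eqref{appro.pde}, the drift/zero-energy part collapses to $\int(div\,g-f^n)(X^n_r)\,dr$, and the divergence term is rewritten through the analogue of Lemma \ref{div g} for $X^n$, namely $\int_s^t div\,g(X^n_r)\,dr=-\int_s^t g(r,X^n_r)\ast dX^n_r$. (As in Proposition \ref{decompositionofu}, the It\^o formula is justified by approximating $g$, hence $\bar G$, by smooth fields.) This produces the BSDE in (1).

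Part (2) is an algebraic reorganization. I would apply the $X^n$-It\^o formula to $\bar G(t,X^n_t)$, producing $\langle\nabla\bar G,dB_r\rangle$, $\partial_r\bar G\,dr$, and the generator term $(\tfrac12\Delta\bar G-n\langle\vec\delta,\nabla\bar G\rangle)\,dr$. Writing $\tilde Z^n=Z^n-2\nabla\bar G$ absorbs the martingale correction; the identity $\Delta\bar G-\bar G=div\,g$ from Lemma \ref{lamma. regularity of G} together with $\int g\ast dX^n=-\int div\,g(X^n)\,dr$ makes the second-order terms cancel exactly; and inserting $f^n=f-nh\langle\vec\delta,\vec n\rangle+2n\langle\vec\delta,\nabla\bar G\rangle$ cancels the $2n\langle\vec\delta,\nabla\bar G\rangle$ contribution against the one from the generator term. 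What remains is precisely \eqref{decomposition penalizition BSDE}.

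The hard part is (3). The naive analytic estimate is useless here, since the a priori PDE bound of the preceding theorem carries a factor $n^2\|\delta\|^2\|h\|^2$ that explodes; the uniform bound must come from probabilistic structure. The decisive fact is the sign condition $\langle\vec\delta,\vec n\rangle\le 0$, so that $A^n_t:=-n\int_0^t\langle\vec\delta,\vec n\rangle(X^n_r)\,dr$ is nondecreasing; by Proposition \ref{appro.diffusion estimate} it converges to the local time $L_t$, and with Proposition \ref{boundLt} and Corollary \ref{bound.local time} one aims for the uniform exponential moment control $\sup_n E[e^{\mu A^n_T}]<+\infty$. I would then apply It\^o to $|\tilde Y^n_t|^2$ (or, to neutralize the bad term cleanly, to $e^{\mu A^n_t}|\tilde Y^n_t|^2$): the dangerous contribution $-2n\int \tilde Y^n_r h\langle\vec\delta,\vec n\rangle\,dr=2\int \tilde Y^n_r h\,dA^n_r$ is dominated, using $|h|\le K$ from \textbf{(H2)}, by $\int|\tilde Y^n_r|^2\,dA^n_r+K^2(A^n_T-A^n_t)$; the remaining drivers $f+\bar G+2\partial_r\bar G$ are controlled by \eqref{integrability condition} and the boundedness of $\bar G,\nabla\bar G,\partial_r\bar G$ (Lemma \ref{lamma. regularity of G}), and the stochastic integral by Burkholder--Davis--Gundy and Young's inequality. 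Closing the estimate requires a Gronwall argument with respect to the random measure $dr+dA^n_r$, whose exponential weight must be integrable uniformly in $n$. This last uniformity is the crux, and I expect the technical difficulty to concentrate there.
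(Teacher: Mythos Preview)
Your plan for parts (1) and (2) is exactly what the paper does: it cites Stoica's result for the BSDE representation along $X^n$, and then obtains \eqref{decomposition penalizition BSDE} by subtracting the It\^o decomposition of $2\bar G(t,X^n_t)$, with the cancellations you describe.

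For part (3) your strategy is correct but more elaborate than necessary. The paper also sets $A^n_t=L^n_t:=-n\int_0^t\langle\vec\delta,\vec n\rangle(X^n_r)\,dr$ and uses the sign condition to see it is increasing, but it applies It\^o to the plain quantity $|\tilde Y^n_t|^2$, not to the exponentially weighted version. After a standard Gronwall in $dr$ and BDG, the only ``bad'' term left is $C\,E\int_0^T|\tilde Y^n_r H^n(r)|\,dL^n_r$; using $|h|\le K$ and pulling out $\sup_r|\tilde Y^n_r|$, this is dominated by $\tfrac12 E[\sup_r|\tilde Y^n_r|^2]+C'K^2E[(L^n_T)^2]$, and the first piece is absorbed into the left-hand side. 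Thus no Gronwall with respect to $dr+dA^n_r$ and no uniform exponential moments of $A^n_T$ are needed: only $\sup_n E[(L^n_T)^2]<\infty$, which follows from the $L^p$-convergence of $L^n_T$ to the local time in Proposition~\ref{appro.diffusion estimate} together with Proposition~\ref{boundLt}. Your exponential-weight route would also work, but the ``crux'' you anticipate dissolves with this simpler absorption trick.
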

\begin{proof}
	(1) is proved in Proposition 4.2 \cite{S} and then BSDE \eqref{decomposition penalizition BSDE} is estabilished by decomposition of $G(t,X^n_t)$.
Set
\begin{equation*}\begin{split}
&(1) L^n_t=-n\int_{0}^t\langle \vec\delta, \vec n\rangle(X^n_r)dr;\\
&(2) F^n(t,\cdot)=f(t,X^n_t) +\bar{G}(t,X^n_t)+2\partial_t\bar{G}(t,X^n_t);\\
&(3) H^n(t,\cdot)=h(t,X^n_t).
\end{split}\end{equation*}
Since $\langle \vec\delta, \nabla \psi\rangle\leq 0$, $\{L^n_t\}_{t\in[0,T]}$ is an increasing process. 

Applying It\^o's formula to $|\tilde{Y}^n_t|^2$, we have
\begin{equation}\label{itotildeYn}\begin{split}
|\tilde{Y}^n_t|^2+\int_t^T |\tilde{Z}^n_r|^2dr&=|\tilde{Y}^n_T|^2+2\int_t^T \tilde{Y}^n_r\langle \tilde{Z}^n_r,dB_r\rangle
+2\int_t^T \tilde{Y}^n_rF^n(r)dr+2\int_t^T \tilde{Y}^n_rH^n(r)dL^n_r\\
&\leq |\tilde{Y}^n_T|^2+2\int_t^T \tilde{Y}^n_r\langle \tilde{Z}^n_r,dB_r\rangle
+\int_t^T |\tilde{Y}^n_r|^2dr+\int_t^T |F^n(r)|^2dr\\&\quad+2\int_t^T |\tilde{Y}^n_rH^n(r)|dL^n_r.
\end{split}\end{equation}
Taking expectation in the above equation, we get 
\begin{equation}\label{itotildeYn2}
E\Big[|\tilde{Y}^n_t|^2+\int_t^T |\tilde{Z}^n_r|^2dr\Big]\leq E|\tilde{Y}^n_T|^2+E\int_t^T|F^n(r)|^2dr+2E\int_t^T |\tilde{Y}^n_rH^n(r)|dL^n_r+E\int_t^T |\tilde{Y}^n_r|^2dr
\end{equation}
Then, thanks to Gronwall's lemma, we obtain
\begin{eqnarray}\label{itotildeYn3}
E|\tilde{Y}^n_t|^2\leq CE\Big[|\tilde{Y}^n_T|^2+\int_0^T |F^n(r)|^2dr+\int_0^T |\tilde{Y}^n_rH^n(r)|dL^n_r\Big].
\end{eqnarray}
By B-D-G's inequality, combining \eqref{itotildeYn}, \eqref{itotildeYn2} and \eqref{itotildeYn3}, we have
\begin{equation*}\begin{split}
&E\Big[\sup_{t\in[0,T]}|\tilde{Y}^n_t|^2]+\int_0^T E[|\tilde{Z}^n_r|^2]dr\Big]\\
\leq&\, C E\Big[|\tilde{Y}^n_T|^2+\int_0^T|F^n(r)|^2dr+\int_0^T |\tilde{Y}^n_rH^n(r)|dL^n_r\Big]\\
\leq&\, C E\Big[|\tilde{Y}^n_T|^2+\int_0^T|F^n(r)|^2dr\Big]+\frac{1}{2}E[\sup_{t\in[0,T]}|\tilde{Y}^n_t|^2]+
\frac{C^2}{2}K E[(L^n_T)^2],
\end{split}\end{equation*}
where $C$ is a constant dependent on $\alpha,\beta, T,\mu$.  

By the boundedness of $f(t,x)$, $\bar{G}$ and $\partial_t\bar{G}$,  we get the desired uniformly boundedness.
\end{proof}
We now turn to prove $\{(\tilde{Y}^n,\tilde{Z}^n)\}_{n\geq1}$ is a Cauchy Sequence.
\begin{corollary}
\label{cauchy process}
\begin{eqnarray}
\lim_{m,n\rightarrow \infty}E\Big[\sup_{t\in[0,T]}|\tilde{Y}^n_t-\tilde{Y}^m_t|^2+\int_0^T |\tilde{Z}^n_t-\tilde{Z}^m_t|^2dt\Big]=0.
\end{eqnarray}
\end{corollary}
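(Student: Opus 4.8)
The plan is to apply It\^o's formula to $|\tilde Y^n_t-\tilde Y^m_t|^2$ and to show that every forcing term on the right-hand side tends to $0$ as $m,n\to\infty$; the driving mechanism is the uniform convergence of the penalized diffusions, not any contraction in $(y,z)$, since in the linear problem the coefficients $f,g,h$ depend only on $(t,x)$. Write $\delta Y=\tilde Y^n-\tilde Y^m$, $\delta Z=\tilde Z^n-\tilde Z^m$, and keep the notation $F^n,H^n,L^n$ from the proof of Theorem \ref{solutionofpenalizedequation}. Subtracting the two instances of \eqref{decomposition penalizition BSDE} gives a linear BSDE for $(\delta Y,\delta Z)$ with terminal value $\delta Y_T=[\Phi-2\bar G(T,\cdot)](X^n_T)-[\Phi-2\bar G(T,\cdot)](X^m_T)$, bounded-variation forcing $F^n\,dr+H^n\,dL^n$ minus its $m$-analogue, and Brownian part $\int\langle\delta Z,dB\rangle$. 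Applying It\^o, taking expectations to remove the martingale, and using Burkholder--Davis--Gundy for the supremum, it remains to estimate the terminal term, the drift term, and the increasing-process term.

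The terminal and drift contributions are standard. As $\bar D$ is compact, $\Phi$ is bounded and uniformly continuous and $\bar G(T,\cdot)$ is H\"older continuous by Lemma \ref{lamma. regularity of G}; together with $E[\sup_t|X^n_t-X^m_t|^2]\to0$, which follows from Proposition \ref{appro.diffusion estimate}(1) and the triangle inequality (both $X^n,X^m\to X$), this gives $E|\delta Y_T|^2\to0$. For the drift, $F^n(r)-F^m(r)=[f(r,X^n_r)-f(r,X^m_r)]+[\bar G(r,X^n_r)-\bar G(r,X^m_r)]+2[\partial_t\bar G(r,X^n_r)-\partial_t\bar G(r,X^m_r)]$ is bounded by $C|X^n_r-X^m_r|$ by \textbf{(H4)} and the Lipschitz/H\"older regularity of $\bar G$ and $\partial_t\bar G$, so after Young's inequality this term is controlled by $\int_t^T|\delta Y_r|^2\,dr+C\,E\int_0^T|X^n_r-X^m_r|^2\,dr$, whose last expectation vanishes.

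The essential difficulty is the increasing-process term $2\int_t^T\delta Y_r\,(H^n\,dL^n-H^m\,dL^m)$: $L^n$ and $L^m$ are two distinct penalized approximations of the local time, and the weights $-n\langle\vec\delta,\vec n\rangle(X^n)$ diverge, so the two measures cannot be compared by any bound on their total variation. My plan is to pass through the forward convergence of Proposition \ref{appro.diffusion estimate}(2). Since $dL^n_r=\langle\vec n(X^n_r),dK^n_r\rangle$, the processes $\mathcal A^n_t:=\int_t^T h(r,X^n_r)\,dL^n_r$ satisfy $H^n\,dL^n-H^m\,dL^m=d(\mathcal A^m-\mathcal A^n)_r$, and by Proposition \ref{appro.diffusion estimate}(2) each $\mathcal A^n$ converges uniformly in $t$ to $\int_t^T h(r,X_r)\,dL_r$. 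That proposition yields only $L^1$ convergence; I would upgrade it to $E[\sup_t|\mathcal A^n_t-\mathcal A^m_t|^2]\to0$ by uniform integrability, using the uniform moment bound $\sup_nE[(L^n_T)^2]<\infty$ coming from Proposition \ref{boundLt} and Corollary \ref{bound.local time} (equivalently from the $L^p$-convergence $K^n\to K$), and a time-grid approximation to cover the $t$-dependence of $h$ not permitted by the $C^1_b$ hypothesis there. Granting this, I integrate by parts in $r$: since $\mathcal A^m-\mathcal A^n$ is continuous, of bounded variation, vanishes at $T$, and has zero cross-variation with the Brownian part of $\delta Y$, the term reduces to products of the $L^2$-bounded factors $\sup_t|\delta Y_t|$ and $\delta Z$ with the $L^2$-small factor $\sup_t|\mathcal A^n_t-\mathcal A^m_t|$, all of which vanish in expectation by Cauchy--Schwarz.

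Inserting the three estimates into the It\^o identity and absorbing $\int_t^T|\delta Y_r|^2\,dr$ by Gronwall's lemma bounds $E|\delta Y_t|^2$; reinserting this and applying Burkholder--Davis--Gundy to the martingale then bounds $E[\sup_t|\delta Y_t|^2]+E\int_0^T|\delta Z_r|^2\,dr$ by a quantity tending to $0$, which is the claim. Throughout, the uniform a priori estimate $\sup_nE[\sup_t|\tilde Y^n_t|^2+\int_0^T|\tilde Z^n_t|^2\,dt]<\infty$ from Theorem \ref{solutionofpenalizedequation} justifies the Cauchy--Schwarz splittings. I expect the genuinely delicate steps to be the $L^2$-upgrade of the singular local-time convergence and the integration by parts that dissolves the diverging penalization weight.
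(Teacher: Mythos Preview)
Your skeleton (It\^o on $|\delta Y|^2$, Gronwall to close the loop in $\int|\delta Y|^2dr$, then BDG for the supremum) is exactly the paper's, and your treatment of the terminal and drift terms coincides with it almost verbatim.

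The only substantive divergence is in the local-time term. You route it through an integration by parts against $\mathcal A^n-\mathcal A^m$ and then argue uniform $L^2$-convergence of these processes, flagging the $L^2$-upgrade and the time-dependence of $h$ as the delicate points. The paper avoids all of this with the elementary add--and--subtract splitting
\[
H^n\,dL^n_r-H^m\,dL^m_r=\bigl(h(r,X^n_r)-h(r,X^m_r)\bigr)\,dL^n_r+h(r,X^m_r)\,d(L^n_r-L^m_r).
\]
The first piece is controlled by the spatial Lipschitz condition \textbf{(H5)} on $h$, yielding a remainder $\bigl(\int_0^T|X^n_r-X^m_r|\,dL^n_r\bigr)^2$; the second uses $|h|\le K$ from \textbf{(H2)} together with the $L^p$-convergence of $L^n$ from Proposition~\ref{appro.diffusion estimate}, leaving a remainder of size $(L^n_T-L^m_T)^2$. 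After Young's inequality both pieces donate a $\tfrac12\sup_r|\delta Y_r|^2$ that is reabsorbed on the left, and the remainders go to zero directly. So no integration by parts, no $L^2$-upgrade, and no time-grid approximation are needed. Your approach would also go through, but the paper's splitting is noticeably shorter; the point you identified as ``genuinely delicate'' dissolves once one adds and subtracts $h(r,X^m_r)\,dL^n_r$ rather than passing to the limit process $\mathcal A^n$.
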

\begin{proof}
Since
\begin{equation*}
d(\tilde{Y}^n_t-\tilde{Y}^m_t)=\langle \tilde{Z}^n_t-\tilde{Z}^m_t,dB_t\rangle-(F^n(t)-F^m(t)dt-(H^n(t)dL^n_t-H^m(t)dL^m_t),
\end{equation*}
applying It\^o's formula to $(\tilde{Y}^n-\tilde{Y}^m)^2$, we obtain
\begin{equation*}\begin{split}
|\tilde{Y}^n_t-\tilde{Y}^m_t|^2+\int_t^T |\tilde{Z}^n_r-\tilde{Z}^m_r|^2dr&=|\tilde{Y}^n_T-\tilde{Y}^m_T|^2-2\int_t^T (\tilde{Y}^n_r-\tilde{Y}^m_r)\langle \tilde{Z}^n_r-\tilde{Z}^m_r,dB_r\rangle\\&+2\int_t^T (\tilde{Y}^n_r-\tilde{Y}^m_r)(F^n(t)-F^m(t))dr\\&+2\int_t^T (\tilde{Y}^n_r-\tilde{Y}^m_r)(H^n(r)dL^n_r-H^m(r)dL^m_r)
\end{split}\end{equation*}
Firstly, it follows that
\begin{equation*}\begin{split}
2\int_t^T (\tilde{Y}^n_r-\tilde{Y}^m_r)(F^n(t)&-F^m(t))dt
\leq\int_t^T |\tilde{Y}^n_r-\tilde{Y}^m_r|^2dr
+3\int_t^T |\tilde{X}^n_r-\tilde{X}^m_r|^2dr\\
&+\int_t^T 3|\bar{G}(r,X^n_r)-\bar{G}(r,X^m_r)|^2+12|\partial_r\bar{G}(r,X^n_r)-\partial_r\bar{G}(r,X^m_r)|^2dr.
\end{split}\end{equation*}
Furthermore,
\begin{equation*}\begin{split}
&\int_t^T (\tilde{Y}^n_r-\tilde{Y}^m_r)(H^n(r)dL^n_r-H^m(r)dL^m_r)\\
\leq&\int_t^T (\tilde{Y}^n_r-\tilde{Y}^m_r)(h(r,X^n_r)-h(r,X^m_r))dL^n_r
+\int_t^T (\tilde{Y}^n_r-\tilde{Y}^m_r) h(t,X^m_r)d(L^n_r-L^m_r)\\
\leq& \frac{1}{2}\sup_{r\in[0,T]} |\tilde{Y}^n_r-\tilde{Y}^m_r|^2+4C^2 \Big(\int_0^T |X^n_r-X^m_r|dL^n_r\Big)^2
+4K^2 (L^n_T-L^m_T)^2.
\end{split}\end{equation*}

By Gronwall's inequality and standard calculation, there is a constant $C'>0$ depending on $C,\alpha,\beta,T,K$, such that
\begin{eqnarray*}
&&E[\sup_{t\in[0,T]}|\tilde{Y}^n_t-\tilde{Y}^m_t|^2+\int_t^T |\tilde{Z}^n_r-\tilde{Z}^m_r|^2dr]\\
&\leq& C'\{ E[\int_0^T|\bar{G}(r,X^n_r)-\bar{G}(r,X^m_r)|^2+|\partial_r\bar{G}(r,X^n_r)-\partial_r\bar{G}(r,X^m_r)|^2+|X^n_r-X^m_r|^2dr]\\
  &+&E[\sup_{t\in[0,T]}|X^n_t-X^m_t|^4]^\frac{1}{2}E[L^n_T]^{\frac{1}{2}}+E[(L^n_T-L^m_T)^2]  \}\\
&\rightarrow& 0,\ \ \mbox{as}\ \ m,n\rightarrow \infty,
\end{eqnarray*}
where the limit is obtained by the H\"{o}lder continuity of $G,\partial_t G$ and uniform boundedness in Theorem \ref{solutionofpenalizedequation} and Proposition \ref{appro.diffusion estimate}.
\end{proof}

\subsection{The linear Neumann boundary problem}

In this section, we will prove the existence and uniqueness of solution for PDE \eqref{linearPDE}. \\
For fixing starting point $(t,x)\in [0,T]\times \bar{D}$, the reflecting diffusion is defined as follows
\begin{equation*}
\left\{\begin{split}
&X^{t,x}_s=x+(B_{t\vee s}-B_t)+\int_{t}^{t\vee s}\vec n (X^{t,x}_r)dL^{t,x}_r, \ for\ s\geq 0,\\
&L^{t,x}_s=\int_{t}^{t\vee s} I_{\{X^{t,x}_r\in \partial D\}}dr.
\end{split}\right.
\end{equation*}

By Theorem \ref{theorem. G}, the following BSDE
\begin{equation*}
\tilde{Y}^{t,x}_s=\Phi(X^{t,x}_T)-2G(T,X^{t,x}_T)
+\int_s^T \tilde f(r,\tilde Y^{t,x}_r)dr+\int_s^T\tilde{h}(r,\tilde{Y}^{t,x}_r)dL^{t,x}_r +\int_s^T\langle\tilde{Z}^{t,x}_r,dB_r\rangle,
\end{equation*}
has a unique solution $(\tilde{Y}^{t,x}_s,\tilde{Z}^{t,x}_s)$ for $s\in[t,T]$.

Set $\tilde{u}(t,x)=\tilde{Y}^{t,x}_t$, by $\cite{PZ}$, it is known that $\tilde{u}\in C([0,T]\times \bar{D})\subset C([0,T];L^2(D))$. Furthermore, by \cite{PP} and \cite{BM01}, 
$\{\tilde{Z}^{t,x}_s\}_{s\in[t,T]}$ has an a.s. continuous version which is given by
$$
\tilde{Z}^{t,x}_t=\nabla\tilde{Y}^{t,x}_t=\nabla \tilde{u}(t,X^{t,x}_t) \ \ \mbox{and}\ \ \tilde{Z}^{t,x}_s=\nabla \tilde{u}(s,X^{t,x}_s).
$$

By the estimate in Theorem $\ref{theorem. G}$,  for every $t\in[0,T]$,
\begin{equation}\begin{split}
\int_t^T\int_D |\tilde u(r,x)|^2+|\nabla \tilde u(r,x)|^2dxdr
 &\leq C \int_t^T\int_D E[|\tilde u(r,X^{t,x}_r)|^2+|\nabla \tilde u(r,X^{t,x}_r)|^2 ]dxdr\\
&=C\int_t^T\int_D E[|\tilde Y^{t,x}_r|^2+|\tilde Z^{t,x}_r|^2 ]dxdr<+\infty,
\end{split}\end{equation}
where the first inequality is proved in \cite{BM01}. Therefore, $\tilde u\in L^2([0,T]; H^1(D))$.

The approximating process
\begin{eqnarray*}
X^{n,t,x}_s=x+(B_{t\vee s}-B_t)+\int_{t}^{t\vee s} (-n\vec\delta)(X^{n,t,x}_r)dr, \ for\ s\geq 0.
\end{eqnarray*}

Let $\{P^{n,t}_s\}_{s\geq t}$ be the semigroup and $L^{n}=\frac{1}{2}\Delta -\langle n\vec\delta, \nabla\cdot \rangle$ be the generator corresponding to $\{X^{n,t,\cdot}\}$. It is obvious that, for $f\in L^2(\mathbb{R}^N)$, $P^{n,t}_r f(x)=f(x)$, $r\in [0,t]$, and $P^{n,t}_r f(x)=E[f(X^{n,t,x}_r)]$, $r\in[t,T]$.

From the last section, we know that the solution $(\tilde{Y}^{n,t,x},\tilde{Z}^{n,t,x})$ of the following BSDE
\begin{equation}
\label{appro. BSDE}\begin{split}
\tilde{Y}^{n,t,x}_s&=(\Phi-2\bar{G})(X^{n,t,x}_T)-\int_s^T \tilde{Z}^{n,t,x}_r dB_r+\int_s^T F^n(r, \tilde{Y}^{n,t,x}_r)dr\\
&+\int_s^T H^n(r, \tilde{Y}^{n,t,x}_r)dL^{n,t,x}_r,\ \ s\in[t,T],
\end{split}\end{equation}
and $\tilde{Y}^{n,t,x}_s=\tilde{Y}^{n,t,x}_t$, $s\in[0,t]$, with $L^{n,t,x}_s=-\int_t^{t\vee s} \langle n\vec\delta, \nabla\psi\rangle(X^{n,t,x}_r)dr$, satisfies the following relationships
$$
(u^n-2\bar{G})(t,x)=(u^n-2\bar{G})(t,X^{n,t,x}_t)=\tilde{Y}^{n,t,x}_t
$$
and
$$
\tilde{Y}^{n,t,x}_s=(u^n-2\bar{G})(s, X^{n,t,x}_s),\ \mbox{for}\ s\in [t,T].
$$
Since, for $t\leq s\leq T$,
\begin{equation*}\begin{split}
P^{n,t}_s u^n_s-P^{n,t}_t u^n_t&=\int_t^s P^{n,t}_r (\partial_r +\frac{1}{2}\Delta+\langle b-n\vec\delta,\nabla\cdot\rangle)u^n_r dr\\
&=\int_t^s P^{n,t}_r (div(\nabla\bar{G})-\bar{G}-f)dr+\int_t^s P^{n,t}_r\langle n\vec\delta,\vec n h-2\nabla\bar{G}\rangle dr,
\end{split}\end{equation*}
then for $\forall\phi(t,x)\in C^\infty([0,T])\otimes C^\infty_0(\mathbb{R}^N)$, we have
\begin{equation}
\label{weak solution un}\begin{split}
(P^{n,t}_Tu^n_T,\phi_T)-(P^{n,t}_tu^n_t,\phi_t)&=\int_t^T (P^{n,t}_r (div(\nabla\bar{G})-\bar{G}-f),\phi_r)dr\\&+
\int_t^T( P^{n,t}_r\langle n\vec\delta,\vec n h-2\nabla\bar{G}\rangle,\phi_r)dr.
\end{split}\end{equation}

Specially, taking expectation on both sides of $(\ref{appro. BSDE})$ and letting $s=t$, we obtain
\begin{eqnarray*}
u^n(t,x)=P^{n,t}_T \Phi(x)+\int_t^T P^n_{r}(div(\nabla\bar{G})-\bar{G}-f)dr
+\int_t^T P^n_{r}\langle n\vec\delta,\vec n h-2\nabla\bar{G}\rangle dr,
\end{eqnarray*}
which means $u^n$ is also a mild solution of PDE $(\ref{appro.pde})$.

By Corollary $\ref{cauchy process}$, we deduce
\begin{equation*}
\sup_{r\in[t,T]}\int_D E[|\tilde{Y}^{n,t,x}_r-\tilde{Y}^{m,t,x}_r|^2]dx+\int_t^T\int_D E |\tilde{Z}^{n,t,x}_r-\tilde{Z}^{m,t,x}_r|^2]dxdr
\rightarrow 0,\end{equation*}
as $m,n\rightarrow \infty, \forall t\in[0,T]$.
\begin{proposition}
\label{proposition.limit}
For $(t,x)\in[0,T]\times D$,
\begin{equation}
\label{limit process}
\lim_{n\rightarrow \infty}\int_D E[\sup_{r\in[t,T]}|\tilde{Y}^{t,x}_r-\tilde{Y}^{n,t,x}_r|^2]dx
+\int_t^T\int_D E[|\tilde{Z}^{t,x}_r-\tilde{Z}^{n,t,x}_r|^2|]dxdr=0.
\end{equation}
Furthermore, for every $t\in[0,T]$,

(1) $u^n(t,x)$ is a Cauchy sequence in $L^2(D)$ and
$$
\bar{u}(t, x):=L^2-\lim_{n\rightarrow \infty} u^n(t,x);
$$

(2) $\bar{u}(t, x)=\tilde u(t,x)+2G(t,x)$;

(3) for $r\in [t,T]$, $\lim \limits_{n\rightarrow\infty}P^{n,t}_ru^n_r=P^t_r\bar{u}_r$.

\end{proposition}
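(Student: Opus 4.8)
The plan is to prove the main convergence \eqref{limit process} first, and then read off (1)--(3) as short consequences. The starting observation is that Corollary \ref{cauchy process}, integrated in the starting point $x\in D$, shows that $\{(\tilde Y^{n,t,x},\tilde Z^{n,t,x})\}_{n\geq1}$ is a Cauchy sequence for the norm appearing in \eqref{limit process}, i.e. $\big(\int_D E[\sup_{r\in[t,T]}|\,\cdot\,|^2]\,dx\big)^{1/2}$ on the $Y$-component together with the $L^2(D\times[t,T]\times\Omega)$ norm on the $Z$-component. By completeness it converges to some limit $(\hat Y,\hat Z)$, so the whole task reduces to identifying $(\hat Y,\hat Z)$ with the solution $(\tilde Y^{t,x},\tilde Z^{t,x})$ of the reflecting-diffusion BSDE \eqref{BSDE-1}.

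To carry out the identification I would pass to the limit $n\to\infty$ in the penalized BSDE \eqref{appro. BSDE}. Most terms are routine: the terminal value satisfies $(\Phi-2\bar G)(X^{n,t,x}_T)\to(\Phi-2G)(X^{t,x}_T)$ in $L^2$ by the continuity of $\Phi$, the H\"older continuity of $\bar G$ from Lemma \ref{lamma. regularity of G}, and $X^{n,t,x}_T\to X^{t,x}_T$ from Proposition \ref{appro.diffusion estimate}(1); the stochastic integral converges by It\^o's isometry and the $L^2$-convergence of $\tilde Z^{n,t,x}$ to $\hat Z$; and in the linear setting of this section the Lebesgue drift $F^n(r)=f(r,X^n_r)+\bar G(r,X^n_r)+2\partial_r\bar G(r,X^n_r)$ converges to $\tilde f(r)=f(r,X_r)+G(r,X_r)+2\partial_r G(r,X_r)$ because $f$, $\bar G$ and $\partial_r\bar G$ are continuous and $X^n_r\to X_r$.

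The delicate term is the penalization integral $\int_s^T h(r,X^n_r)\,dL^{n,t,x}_r$, whose integrator $L^{n,t,x}$ converges to the local time $L^{t,x}$, a measure that charges only $\partial D$ in the limit. This is precisely the situation addressed by Proposition \ref{appro.diffusion estimate}(2): writing $dL^{n,t,x}_r=\langle\nabla\psi(X^n_r),dK^n_r\rangle$ (using $\vec n=\nabla\psi$), the convergence $\int\phi(X^n)\,dK^n\to\int\phi(X)\,dK$ applied to $\phi=h(r,\cdot)\nabla\psi$ yields $\int_s^T h(r,X^n_r)\,dL^{n,t,x}_r\to\int_s^T h(r,X^{t,x}_r)\,dL^{t,x}_r$; the boundedness of $h$ in \textbf{(H2)} and the moment bounds of Corollary \ref{bound.local time} supply the uniform integrability needed to upgrade the cited $L^1$ statement to convergence in the required norm, while a routine smoothing handles the fact that $h$ is only Lipschitz rather than $C^1$ and depends on time. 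Passing to the limit then shows that $(\hat Y,\hat Z)$ solves \eqref{BSDE-1}, and the uniqueness part of Theorem \ref{theorem. G} gives $(\hat Y,\hat Z)=(\tilde Y^{t,x},\tilde Z^{t,x})$, which is \eqref{limit process}. I expect this passage through the boundary/penalization term to be the main obstacle, precisely because the penalized measures $dL^{n,t,x}$ become singular only in the limit, while the integrand $h(r,X^n_r)$ itself still varies with $n$.

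Finally I would deduce (1)--(3). Evaluating \eqref{limit process} at $r=t$ and using $X^{n,t,x}_t=x$ turns it into the deterministic statement that $u^n(t,\cdot)-2\bar G(t,\cdot)=\tilde Y^{n,t,\cdot}_t\to\tilde Y^{t,\cdot}_t=\tilde u(t,\cdot)$ in $L^2(D)$; hence $u^n(t,\cdot)$ is Cauchy in $L^2(D)$, proving (1) with $\bar u(t,\cdot):=\lim_n u^n(t,\cdot)$, and in the limit $\bar u(t,\cdot)=\tilde u(t,\cdot)+2\bar G(t,\cdot)=\tilde u(t,\cdot)+2G(t,\cdot)$ on $D$, since $\bar G=G$ there, which is (2). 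For (3), for $r\in[t,T]$ I would use $\tilde Y^{n,t,x}_r=(u^n-2\bar G)(r,X^{n,t,x}_r)$ and $\bar u=\tilde u+2G$ to write
\begin{equation*}
P^{n,t}_r u^n_r(x)-P^t_r\bar u_r(x)=\big(E[\tilde Y^{n,t,x}_r]-E[\tilde Y^{t,x}_r]\big)+2\big(E[\bar G(r,X^{n,t,x}_r)]-E[G(r,X^{t,x}_r)]\big),
\end{equation*}
where the first bracket tends to $0$ by \eqref{limit process}, and the second by the H\"older continuity of $\bar G$, the identity $\bar G=G$ on $\bar D$ (where $X^{t,x}_r$ lives), and $X^{n,t,x}_r\to X^{t,x}_r$ from Proposition \ref{appro.diffusion estimate}(1).
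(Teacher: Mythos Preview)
Your proposal is correct and ends up close to the paper, but the route to \eqref{limit process} differs in one point worth noting. The paper does not argue ``Cauchy $+$ identification of the limit via uniqueness''. Instead it repeats the It\^o/Gronwall computation of Corollary~\ref{cauchy process} directly for the pair $(\tilde Y^{n,t,x}-\tilde Y^{t,x},\tilde Z^{n,t,x}-\tilde Z^{t,x})$, i.e.\ it compares the penalized BSDE with the reflecting BSDE term by term inside an energy estimate. In that estimate the boundary contribution appears as $\int_t^T(\tilde Y^{n}_r-\tilde Y_r)\big(H^n(r)\,dL^n_r-H(r)\,dL_r\big)$, which is easier to control than the bare term $\int_s^T h(r,X^n_r)\,dL^{n,t,x}_r$ you have to pass to the limit in: one can simply peel off $\sup_r|\tilde Y^n_r-\tilde Y_r|$ and use the Lipschitz bound on $h$ together with Proposition~\ref{appro.diffusion estimate}, exactly as in the proof of Corollary~\ref{cauchy process}. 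Your approach is also valid, and it has the virtue of making explicit where Proposition~\ref{appro.diffusion estimate}(2) and the uniqueness in Theorem~\ref{theorem. G} enter; the paper's approach is a bit shorter because it never needs to identify the limit BSDE separately.

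For parts (1)--(3) your argument matches the paper's essentially line by line: evaluate at $r=t$ using $X^{n,t,x}_t=x$ to get (1) and (2), and for (3) split $P^{n,t}_r u^n_r-P^t_r\bar u_r$ into the $\tilde Y$-difference controlled by \eqref{limit process} and the $\bar G$-difference controlled by H\"older continuity and $X^{n,t,x}_r\to X^{t,x}_r$.
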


\begin{proof}
Since $G=\bar G$ on $D$, and $\bar{G}$ is Lipschitz continuous,
\begin{equation*}\begin{split}
\lim_{n\rightarrow \infty}E\int_t^T|\bar{G}(r, X^{n,t,x}_r)-G(r,X^{t,x}_r)|^2 dr&\leq \sup_{x\in D} E\int_t^T |X^{n,t,x}_r-X^{t,x}_r|^2 dr\\
&\leq T\sup_{x\in D}E[\sup_{r\in[0,T]}|X^{n,t,x}_r-X^{t,x}_r|^2]=0.
\end{split}\end{equation*}
Similarly, by the standard calculus in Corollary $\ref{cauchy process}$, we  find that $(\tilde{Y}^{n,t,x},\tilde{Z}^{n,t,x})$ is a Cauchy sequence and the limit is $(\tilde{Y}^{t,x},\tilde{Z}^{t,x})$ which is shown in $(\ref{limit process})$.\\
Furthermore, for fixing $t\in[0,T]$, 
\begin{equation*}\begin{split}
\int_D |u^n(t,x)-u^m(t,x)|^2dx&=\int_{D}|(u^n-2\bar{G})(t,X^{n,t,x}_t)-(u^m-2\bar{G})(t,X^{m,t,x}_t)|^2dx\\
&=\int_D E|\tilde{Y}^{n,t,x}_t-\tilde{Y}^{m,t,x}_t|^2dx\\
&\leq \sup_{r\in[t,T]}\int_D E |\tilde{Y}^{n,t,x}_r-\tilde{Y}^{m,t,x}_r|^2dx\rightarrow 0,\ \mbox{as}\ m,n\rightarrow \infty.
\end{split}\end{equation*}
This shows that $(u^n)_n$ is a Cauchy sequence and we denote the limit as $\bar{u}$.  

On the other hand, \eqref{limit process} implies   
\begin{equation*}\begin{split}
\int_D|\tilde u(t,x)+2G(t,x)-u^n(t,x)|^2dx&=E\int_D |\tilde{Y}^{t,x}_t-\tilde Y^{n,t,x}_t|^2dx\\
&\leq\int_D E[\sup_{r\in[t,T]}|\tilde{Y}^{t,x}_r-\tilde{Y}^{n,t,x}_r|^2]dx\rightarrow 0,\mbox{as}\ \ n\rightarrow\infty.
\end{split}\end{equation*}
By the uniqueness of limit of Cauchy sequence, $\bar{u}(t,x)=\tilde{Y}^{t,x}_t+2G(t,x)$ for every $t\in[0,T]$. Then (1) and (2) are proved. \\
The H\"{o}lder continuity of $\bar{G}$ provides
\begin{equation*}\begin{split}
&\int_D |P^{n,t}_r u^n_r(x)-P^{t}_r \bar{u}(x)|^2 dx
\leq \int_D E[|u^n(r,X^{n,t,x}_r)-\bar{u}(r,X^{t,x}_r)|^2]dx\\
&\leq 2\int_D E[|\tilde{Y}^{n,t,x}_r-\tilde{Y}^{t,x}_r|^2]dx+8\int_D E[|\bar{G}(r,X^{n,t,r})-\bar{G}(r,X^{t,x}_r)|^2]dx
\rightarrow 0,\ \ \mbox{as} \ \ n\rightarrow \infty.
\end{split}\end{equation*}
The third conclusion is obtained.
\end{proof}
\begin{theorem}\label{theorem linear}
$\bar{u}\in C([0,T];L^2(D))\cap L^2([0,T];H^1(D))$ is the unique weak solution of the Neumann boundary problem \eqref{linearPDE}.
\end{theorem}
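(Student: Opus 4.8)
The plan is to obtain existence by passing to the limit in the penalized approximation, then to upgrade the resulting mild solution to a weak one, and to obtain uniqueness directly from the probabilistic representation already in hand. First I would record the regularity of $\bar u$. Since $\bar u=\tilde u+2G$, where $\tilde u(t,x)=\tilde Y^{t,x}_t$ lies in $C([0,T]\times\bar D)\cap L^2([0,T];H^1(D))$, and, by Lemma \ref{lamma. regularity of G}, $G(t,\cdot)\in H^1(D)$ for a.e. $t$ with $\int_0^T\|G(t)\|^2_{H^1(D)}dt<\infty$ and $G(t,\cdot)$ H\"older continuous, it follows at once that $\bar u\in C([0,T];L^2(D))\cap L^2([0,T];H^1(D))$.

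To show that $\bar u$ solves \eqref{linearPDE}, I would pass to the limit $n\to\infty$ in the mild representation
\[
u^n(t,x)=P^{n,t}_T\Phi(x)+\int_t^T P^n_r\big(div(\nabla\bar G)-\bar G-f\big)dr+\int_t^T P^n_r\langle n\vec\delta,\vec n h-2\nabla\bar G\rangle\,dr.
\]
The first two terms converge by Proposition \ref{proposition.limit}(3), i.e. $P^{n,t}_r u^n_r\to P^t_r\bar u_r$, together with the convergence of the penalized semigroups to the reflecting one (Proposition \ref{appro.diffusion estimate}(1)) and the identity $G=\bar G$ on $D$. The decisive term is the penalization integral: rewriting it as $E^x\big[\int_t^T\langle n\vec\delta(X^n_r),(\vec n h-2\nabla\bar G)(X^n_r)\rangle dr\big]$ and invoking $K^n_r=-\int_0^r n\vec\delta(X^n_s)ds\to K_r=\int_0^r\vec n(X_s)dL_s$ from Proposition \ref{appro.diffusion estimate}, I would identify its limit as $-E^x\big[\int_t^T(h-2\langle\nabla G,\vec n\rangle)(r,X_r)dL_r\big]$. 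This exhibits $\bar u$ as a mild solution whose boundary contribution carries exactly the Neumann data $h-2\langle g,\vec n\rangle$, recalling $g=\nabla G$ on $\partial D$ by Remark \ref{remark. equivalent pde}.

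I would then upgrade this mild solution to a weak solution in the sense of Definition \ref{defweaksol}. Testing the mild identity against $\partial_t\phi$ and using the Dirichlet-form relation $\frac{d}{dr}(P^t_r v,\phi)=-\mathcal{E}(P^t_r v,\phi)$ for $v\in H^1(D)$ reproduces the bulk terms $\int_0^T\mathcal{E}(\bar u_r,\phi_r)dr$, $\int_0^T(g_r,\nabla\phi_r)dr$ and $\int_0^T(f_r,\phi_r)dr$, while the local-time term is converted into the surface integral $\int_0^T\int_{\partial D}h_r\phi_r\,d\sigma\,dr$ by the formula $E^x[\int_0^t f(X_r)dL_r]=\int_0^t\int_{\partial D}p(r,x,y)f(y)\sigma(dy)dr$ of Section 3 and the trace operator $Tr:H^1(D)\to L^2(\partial D)$. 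For uniqueness, if $u\in L^2([0,T];H^1(D))$ is any weak solution, then by the corollary following Theorem \ref{theorem. G} the pair $(u(\cdot,X_\cdot),\nabla u(\cdot,X_\cdot))$ is the unique solution of BSDE \eqref{BSDE-2}, whence $u(t,x)=Y^{t,x}_t$; since $\bar u(t,x)=\tilde Y^{t,x}_t+2G(t,x)=Y^{t,x}_t$ as well, we conclude $u=\bar u$.

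The main obstacle is the existence half, and within it two points. First, the penalization limit applies the local-time convergence of Proposition \ref{appro.diffusion estimate} to the field $\vec n h-2\nabla\bar G$, which is only H\"older and not $C^1_b$; I expect to need an approximation of this field by $C^1_b$ functions, controlled uniformly in $n$ through the local-time moment bounds of Proposition \ref{boundLt} and Corollary \ref{bound.local time}. Second, the passage from the mild to the weak formulation requires representing the boundary local-time contribution as a genuine surface integral against the trace of the test function, and it is here that the Dirichlet-form machinery together with the density identity for $L_t$ carries the real weight of the argument.
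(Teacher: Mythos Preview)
Your proposal is correct and follows essentially the same route as the paper: pass to the limit in the penalized mild representation via Proposition~\ref{proposition.limit} and Proposition~\ref{appro.diffusion estimate}, identify the penalization term with the local-time integral carrying the Neumann data, then upgrade mild to weak using the Dirichlet-form relation for the semigroup, and deduce uniqueness from the BSDE uniqueness (the paper invokes \eqref{BSDE-1} directly rather than the corollary you cite, but the content is the same). The only notable difference is in the mild-to-weak step: the paper fixes a time-independent $v\in H^1(D)$ and differentiates $t\mapsto(\bar u_t,v)$ using $\frac{\partial}{\partial t}(P^t_s\phi_s,v)=\tfrac12(\nabla P^t_s\phi_s,\nabla v)$, whereas you propose to test against $\partial_t\phi$ for a space-time test function; both lead to the same conclusion. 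Your explicit flagging of the regularity gap in applying Proposition~\ref{appro.diffusion estimate} to $\vec n\,h-2\nabla\bar G$ (only H\"older, not $C^1_b$) is a point the paper passes over silently.
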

\begin{proof}
\textbf{Existence:} 
Taking limit on both sides of \eqref{weak solution un}, by Proposition $\ref{proposition.limit}$, we have
\begin{equation*}\begin{split}
(P^{t}_T\Phi,\phi_T)-(P^{t}_s\bar{u}_s,\phi_s)=&\int_s^T (P^{t}_r (div(\nabla\bar{G})-\bar{G}-f),\phi_r)dr\\&+\lim_{n\rightarrow \infty}\int_s^T( P^{n}_r\langle n\vec\delta,\vec n h-2\nabla\bar{G}\rangle,\phi_r) dr.
\end{split}\end{equation*}
Furthermore, 
\begin{equation*}\begin{split}
&\lim_{n\rightarrow \infty}\int_t^T( P^{n,t}_r\langle n\vec\delta,\vec n h-2\nabla\bar{G}\rangle,\phi_r) dr\\
=&\,\lim_{n\rightarrow \infty} \int_D \int_t^T E[\phi_r(X^{n,t,x}_t)\langle (\vec n h-2\nabla\bar{G})(X^{n,t,x}_r),dK^{n,x,r}\rangle]\\
=&\,\frac{1}{2} \int_D \int_t^T E[\phi_r(X^{t,x}_t)( h- 2\langle\nabla\bar{G},\vec{n}\rangle)(X^{t,x}_r)dL^{x}_r].
\end{split}\end{equation*}
Therefore,
\begin{equation*}
P^{t}_s\bar{u}_s=P^{t}_T\Phi-\int_s^T P^{t}_r (div(\nabla\bar{G})-\bar{G}-f)dr
-\frac{1}{2} E\int_s^T( h- 2\langle\nabla\bar{G},\vec{n}\rangle)(X^{t,x}_r)dL^{t,x}_r.
\end{equation*}
This implies $\bar{u}$ is a mild solution, and we will prove that it is also a weak solution.

Firstly, we know that, for $v\in H^1(D)$, $t\rightarrow(P^{t}_s \phi_s,v)$ is differentiable on $[0,s]$, and
\begin{equation*}
\frac{\partial}{\partial t }(P^{t}_s \phi_s,v)=(-LP^{t}_s \phi_s,v)=\frac{1}{2}(\nabla P^{t}_s \phi_s,\nabla v).
\end{equation*}
Then it follows that
\begin{equation*}\begin{split}
\frac{\partial}{\partial t}(\bar{u}_t,v)
=&\frac{\partial}{\partial t}(P^{t}_T\Phi,v)+(div(\nabla\bar{G})-\bar{G}-f,v)
-\int_t^T \frac{\partial }{\partial t}(P^{t}_r (div(\nabla\bar{G})-\bar{G}-f),v)dr\\
&+\int_{\partial D}( h-2\langle \bar{G},\vec{n}\rangle)(x) d\sigma(x)\\
=&-L(P^{t}_T\Phi,v)-\int_t^T L(P^{t}_r (div(\nabla\bar{G})-\bar{G}-f),v)dr+(div(\nabla\bar{G})-\bar{G}-f,v)\\
&+\int_{\partial D}( h-2\langle \bar{G},\vec{n}\rangle)(x) d\sigma(x)\\
=&(-L \bar u_t,v)+(div(\nabla\bar{G})-\bar{G}-f,v)+\int_{\partial D}( h-2\langle \bar{G},\vec{n}\rangle)(x) d\sigma(x),
\end{split}\end{equation*}
where $(-L u(t,\cdot),v)=\frac{1}{2}(\nabla u(t,\cdot),\nabla v)$.\\

\textbf{Uniqueness:}  Suppose $\hat u\in  C([0,T];L^2(D))\cap L^2([0,T];H^1(D))$ is another solution for PDE \eqref{linearPDE}.
Set $\hat{Y}^{t,x}_s=\hat u(s,X^{t,x}_s)$ and 
$\hat{Z}^{t,x}_s=\nabla \hat u(s,X^{t,x}_s)$,  by Proposition \ref{decompositionofu}, 
$(\hat{Y}^{t,x}_s, \hat{Z}^{t,x}_s)$ admits the representation \eqref{decompositionofu}. 
Therefore, $(\hat{Y}^{t,x}_s-2 G(X^{t,x}_s), \hat{Z}^{t,x}_s-2\nabla G(X^{t,x}_s))$ satisfies \eqref{BSDE-1}. 
By the uniqueness of solution for \eqref{BSDE-1}, we know that 
\begin{equation*}\begin{split}
E\Big[\sup_{s\in[t,T]}|(\hat u-2G)&(s,X^{t,x}_s)-(\bar u-2G)(s,X^{t,x}_s)|^2\\
&+\int_t^T |\nabla(\hat u-2G)(s,X^{t,x}_s)-\nabla (\bar u-2G)(s,X^{t,x}_s)|^2ds\Big]=0,
\end{split}\end{equation*}
which provides that $\hat u=\bar u$ and $\nabla\hat u=\nabla \bar u$.
\end{proof}

\section{Existence and uniqueness of solution for nonlinear Neumann boundary problem}

Now we will prove the result in the nonlinear case by Picard iteration.
Let us consider the Picard sequence $(u^n)_n$ defined by  $u^0=0$ and for all $n\in \mathbb{N}^*$ we denote by $u^{n+1}$ the solution of the linear PDE: 
\begin{equation}\label{picard eq.}
\left\{
\begin{split}
&\partial_t u^{n+1}(t,x)+ \frac{1}{2}\Delta u^{n+1}(t,x)-divg(t,x,u^n,\nabla u^n)+f(t,x,u^{n},\nabla u^n)=0,\ \mbox{on}\ [0,T]\times D,\\
&u^{n+1}(T,x)=\Phi(x),\ \mbox{on}\ D,\\
&\langle \nabla u^{n+1}(t,x)-2g(t,x,u^n,\nabla u^n), \vec{n}(x)\rangle=h(t,x,u^{n}),\ \mbox{on}\ [0,T]\times \partial D.
\end{split}
\right.
\end{equation}
By the result in last section, we know there exists a  unique solution of linear PDE \eqref{picard eq.} for every $n\in \mathbb{N}$.  In the following discussion we will prove the convergence of $\{u^n\}$ in both analytic and probabilistic method independently. 

\subsection{Analytic Method}
\begin{theorem}
Suppose \textbf{(H1)-(H3)}, \textbf{(H6)} hold, then PDE 
\begin{equation}\label{PDE new}
\left\{ \begin{split}
&\partial_t u(t,x)+ \frac{1}{2}\Delta u(t,x)-divg(t,x,u,\nabla u)+f(t,x,u,\nabla u)=0, (t,x)\in [0,T]\times D\\
&u(T,x)=\Phi(x), \ x\in D,\\
&\frac{\partial u}{\partial\vec{n}}(t,x)-2\langle g(t,x,u,\nabla u), \vec{n}\rangle+h(t,x,u)=0, \ (t,x)\in \ [0,T]\times \partial D,
\end{split}
\right.
\end{equation}

 has a unique  weak solution.
\end{theorem}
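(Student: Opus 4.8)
The plan is to run the Picard iteration \eqref{picard eq.} and show that the iterates converge strongly in $L^2([0,T];H^1(D))$ to a weak solution of \eqref{PDE new}. By Theorem \ref{theorem linear} each $u^{n+1}$ is well defined and lies in $C([0,T];L^2(D))\cap L^2([0,T];H^1(D))$: the frozen coefficients $f(t,x,u^n,\nabla u^n)$, $g(t,x,u^n,\nabla u^n)$, $h(t,x,u^n)$ are admissible data, since the linear growth \textbf{(H2)} and \textbf{(H6)} together with $u^n\in L^2([0,T];H^1(D))$ put them in the spaces required by \eqref{integrability condition}. First I would record a uniform bound by applying the a priori estimate \eqref{estimate on solution} to $u^{n+1}$ and inserting the growth bounds for the frozen coefficients; this reduces, after Gronwall, to $\sup_n\int_0^T\|u^n_t\|^2_{H^1}dt<\infty$ (and in any case boundedness follows a posteriori from the telescoping series below).

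The heart of the argument is a Cauchy estimate for consecutive differences. Set $w^{n}:=u^{n+1}-u^{n}$ and $v^{n}:=u^{n}-u^{n-1}$. Subtracting the two copies of \eqref{picard eq.} exhibits $w^{n}$ as the weak solution of a \emph{linear} Neumann problem of the form \eqref{linearPDE} with zero terminal datum, interior divergence source $\delta g_r:=g(r,\cdot,u^n,\nabla u^n)-g(r,\cdot,u^{n-1},\nabla u^{n-1})$, interior forcing $\delta f_r:=f(r,\cdot,u^n,\nabla u^n)-f(r,\cdot,u^{n-1},\nabla u^{n-1})$ and boundary datum $\delta h_r$. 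Testing this equation against $w^{n}$ and integrating by parts exactly as in the derivation of \eqref{estimate on solution} yields the identity
\begin{equation*}
\|w^{n}_t\|^2+\int_t^T\|\nabla w^{n}_r\|^2dr=2\int_t^T(\delta f_r,w^{n}_r)dr+2\int_t^T(\delta g_r,\nabla w^{n}_r)dr+\int_t^T\int_{\partial D}\delta h_r\,w^{n}_r\,d\sigma dr.
\end{equation*}
The argument–Lipschitz bounds supplied by \textbf{(H4)}--\textbf{(H6)} give $|\delta g_r|\le\gamma(|v^{n}|+|\nabla v^{n}|)$, $|\delta f_r|\le\alpha(|v^{n}|+|\nabla v^{n}|)$ and $|\delta h_r|\le\beta|v^{n}|$, while \textbf{(H1)} controls the relevant signs. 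Combining these with Young's inequality and the trace inequality $\|Tr\|$, each right-hand term splits into a small multiple of $\int_t^T\|\nabla w^{n}_r\|^2dr$ (absorbed on the left), a multiple of $\int_t^T\|w^{n}_r\|^2_{H^1}dr$ (handled by Gronwall), and a multiple of $\int_t^T\|v^{n}_r\|^2_{H^1}dr$ coming from the frozen coefficients.

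After absorption and Gronwall's inequality this produces a contraction
\begin{equation*}
\int_0^T\|w^{n}_r\|^2_{H^1}dr\le\Theta\int_0^T\|v^{n}_r\|^2_{H^1}dr,\qquad \Theta=\Theta(\alpha,\beta,\gamma,\|Tr\|,T).
\end{equation*}
The main obstacle is precisely to show $\Theta<1$: the gradient coupling $\gamma|\nabla v^{n}|$ from the divergence term and the boundary trace term must be dominated simultaneously by the single good term $\int_t^T\|\nabla w^{n}_r\|^2dr$ coming from the Laplacian. This forces the structural smallness already present in the linear theory (the constraint $\beta<1/\|Tr\|^2$, together with sufficiently small $\gamma$ and $\alpha$); if the raw constants are too large, I would instead run the estimate on a short interval $[T-\eta,T]$ where the $T$-dependent part of $\Theta$ is small and then patch finitely many such intervals. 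Granting $\Theta<1$, the series $\sum_n\|w^{n}\|_{L^2([0,T];H^1)}$ converges, so $\{u^n\}$ is Cauchy and converges strongly in $L^2([0,T];H^1(D))$ to some $u$.

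Finally I would pass to the limit and settle uniqueness. Strong $H^1$ convergence gives $\nabla u^n\to\nabla u$ in $L^2$, hence along a subsequence a.e.; by \textbf{(H3)},\textbf{(H6)} (and \textbf{(H4)},\textbf{(H5)}) this yields $g(\cdot,u^n,\nabla u^n)\to g(\cdot,u,\nabla u)$ and $f(\cdot,u^n,\nabla u^n)\to f(\cdot,u,\nabla u)$ in $L^2([0,T]\times D)$, and $h(\cdot,u^n)\to h(\cdot,u)$ in $L^2([0,T]\times\partial D)$ through the trace operator; passing to the limit in the weak formulation \eqref{weaksol} written for $u^{n+1}$ shows that $u$ is a weak solution of \eqref{PDE new}. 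For uniqueness, if $u$ and $u'$ are two weak solutions I would apply the very same energy computation to $w:=u-u'$, where now both sources are evaluated at the two \emph{solutions}, so that the monotonicity \textbf{(H1)} on the boundary contributes a favourable $-\beta\|w\|$-type term; the estimate then closes as $\int_0^T\|w_r\|^2_{H^1}dr\le\Theta\int_0^T\|w_r\|^2_{H^1}dr$ with the same $\Theta<1$, forcing $w=0$, i.e. $u=u'$.
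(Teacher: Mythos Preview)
Your strategy is the same Picard scheme the paper runs, and the energy identity you write for $w^n=u^{n+1}-u^n$ is exactly what the paper tests against. The one substantive difference is how you force the contraction constant below $1$. You propose Gronwall (which makes $\Theta$ grow with $T$) and then either structural smallness or a short-interval patching argument. The paper avoids both by inserting an exponential weight from the start: it computes the identity for $e^{\theta s}|u^{n+1}_s-u^n_s|^2$, so that differentiating the weight produces an extra $-\theta\int_0^T e^{\theta s}\|w^n_s\|^2\,ds$ on the right. This term absorbs the $\int\|w^n\|^2$ contributions directly, without Gronwall, and $\theta$ can then be chosen so that the coefficient in front of $\int e^{\theta s}\|w^n\|^2$ matches the one in front of $\int e^{\theta s}\|\nabla w^n\|^2$. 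The contraction is then obtained in the equivalent norm $\|v\|_\theta^2:=\int_0^T e^{\theta s}\|v_s\|_{H^1}^2\,ds$, globally on $[0,T]$ and with a $T$-independent ratio $\rho<1$. Your time-slicing would also work (using that each $u^{n+1}\in C([0,T];L^2(D))$ to restart at $T-\eta$), but the weighted-norm trick is the cleaner and more standard execution of the very obstacle you flagged. The passage to the limit in the weak formulation and the uniqueness argument are then carried out exactly as you describe.
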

\begin{proof}
For simplicity, in this section, we set $g^n(t,x)=g(t,x,u^n,\nabla u^n)$, $h^n(t,x)=h(t,x,u^n)$ and $f^n(t,x)=f(t,x,u^n,\nabla u^n)$. Choosing $\theta>0$, we have
\begin{equation*}\begin{split}
&\|u^{n+1}_0-u^{n}_0\|^2+\int_0^T e^{\theta s}\|\nabla(u^{n+1}_s-u^{n}_s)\|^2ds=-\theta \int_0^T e^{\theta s}\|u^{n+1}_s-u^{n}_s\|^2ds
\\&+2\int_0^Te^{\theta s}(f^n_s-f^{n-1}_s, u^{n+1}_s-u^{n}_s)ds
+2\int_0^T \int_D e^{\theta s}\langle g_s^n-g_s^{n-1},\nabla(u^{n+1}_s-u^{n}_s)\rangle dxds\\
&+2\int_0^T \int_{\partial D}e^{\theta s}(h_s^n-h_s^{n-1})(u^{n+1}_s-u^{n}_s)d\sigma(x)ds,
\end{split}
\end{equation*}
where $d\sigma$ is the $R^{N-1}$-dimensional Lebesgue measure on $\partial D$.\\
Using Cauchy-Schwarz's inequality and Lipschitz conditions, we have 
\begin{equation*}\begin{split}
&2\int_0^T \int_D e^{\theta s}\langle g_s^n-g_s^{n-1},\nabla(u^{n+1}_s-u^{n}_s)\rangle dxds\\
\leq&\, 2\int_0^Te^{\theta s}\|g_s^n-g_s^{n-1}\|\ \|\nabla(u^{n+1}_s-u^{n}_s)\| ds\\
\leq&\,  \gamma \epsilon \int_0^T e^{\theta s}\|\nabla(u^{n+1}_s-u^{n}_s)\|^2 ds
+\frac{\gamma}{\epsilon}\int_0^T e^{\theta s}\|u^{n}_s-u^{n-1}_s\|^2 _{F}\,ds
\end{split}\end{equation*}
and
\begin{equation*}\begin{split}
&2\int_0^Te^{\theta s}(f^n_s-f^{n-1}_s, u^{n+1}_s-u^{n}_s)ds\\
\leq&\, \frac{\alpha}{\epsilon}\int_0^T e^{\theta s}\|u^{n+1}_s-u^{n}_s\|^2ds+\alpha\epsilon\int_0^T e^{\theta s}\|u^{n}_s-u^{n-1}_s\|^2_{F}\, ds
\end{split}\end{equation*}
and
\begin{equation*}\begin{split}
&\int_0^T \int_{\partial D} e^{\theta s}(h_s^{n}-h_s^{n-1})(u^{n+1}_s-u^{n}_s)d\sigma(x)ds\\
\leq&\, \beta\|Tr\|^2 \epsilon_1 \int_0^T e^{\theta s}\|u^{n+1}_s-u^{n}_s\|_{F}^2\,ds+\frac{\beta\|Tr\|^2}{\epsilon_1} \int_0^T e^{\theta s}\|u^{n}_s-u^{n-1}_s\|_{F}^2\,ds,
\end{split}\end{equation*}
where $Tr: H^1(D)\rightarrow L^2(\partial D)$ is the trace operator and $\|Tr\|$ is the norm of the operator satisfying $\|v\|_{L^{2}(\partial D)}\leq \|Tr\|\|v\|_{H^1}$.
Therefore, it follows that
\begin{equation*}\begin{split}
&(\theta-\frac{\alpha}{\epsilon}-\beta\|Tr\|^2 \epsilon_1)\int_0^T e^{\theta s}\|u^{n+1}_s-u^{n}_s\|^2ds
+(1-\gamma\epsilon-\beta\|Tr\|^2 \epsilon_1)\int_0^T e^{\theta s}\|\nabla (u^{n+1}_s-u^{n}_s)\|^2ds\\
\leq& (\alpha\epsilon+\frac{\gamma}{\epsilon}+\frac{\beta\|Tr\|^2}{\epsilon_1})\int_0^T e^{\theta s}\|u^{n}_s-u^{n-1}_s\|^2ds
+(\alpha\epsilon+\frac{\gamma}{\epsilon}+ \frac{\beta\|Tr\|^2}{\epsilon_1})\int_0^T e^{\theta s}\|\nabla(u^{n}_s-u^{n-1}_s)\|^2ds.
\end{split}\end{equation*}
Choose $\epsilon$, $\epsilon_1$ such that
$$
\alpha\epsilon+\frac{\gamma}{\epsilon}+\frac{\beta\|Tr\|^2}{\epsilon_1}< 1-\gamma\epsilon-\beta\|Tr\|^2 \epsilon_1
$$
and $\theta>0$ such that
$$\frac{\theta-\frac{\alpha}{\epsilon}-\beta\|Tr\|^2 \epsilon_1}{1-\gamma\epsilon-\beta\|Tr\|^2 \epsilon_1}=1\,.$$
By setting $\rho=\frac{\alpha\epsilon+\frac{\gamma}{\epsilon}+\frac{\beta\|Tr\|^2}{\epsilon_1}}{1-\gamma\epsilon-\beta\|Tr\|^2 \epsilon_1}$, we find
\begin{equation*}
\int_0^T e^{\theta s} (\|u^{n+1}_s-u^{n}_s\|^2+\|\nabla (u^{n+1}_s-u^{n}_s)\|^2)ds
\leq \rho \int_0^T e^{\theta s} (\|u^{n}_s-u^{n-1}_s\|^2+\|\nabla (u^{n}_s-u^{n-1}_s)\|^2)ds.
\end{equation*}
Note that for fixed positive number $\theta$, the norm is defined as
$$
\|v\|^2_{\theta}:=\int_0^T e^{\theta s}( \|v_s\|^2+\|\nabla v_s\|^2)ds
$$
for $v\in L^2([0,T]; H^1(D))$ is equivalent as $\|v\|^2:=\int_0^T \|v\|^2_{H^1}ds$.

Since $\rho<1$, it follows that
\begin{eqnarray*}
\|u^{n+1}-u^{n}\|^2_{\theta,\delta}\leq \rho \|u^{n}-u^{n-1}\|^2_{\theta,\delta}\leq  \rho^2 \|u^{n-1}-u^{n-2}\|^2_{\theta,\delta}\leq\cdots
\leq  \rho^n \|u^{1}\|^2_{\theta,\delta}\rightarrow 0,\ n\rightarrow\infty.
\end{eqnarray*}
This means $(u^n)_n$ is a Cauchy sequence in $ L^2([0,T]; H^1(D))$, and its limit is denoted by $u$.

For any test function $\phi$, we have
\begin{equation*}\begin{split}
(u^n_T,\phi_T)&-(u^n_0,\phi_0)-\int_0^T(u_t^n,\partial_t\phi_t)dt=\frac{1}{2}\int_{0}^{T}\mathcal{E}(u^n_t,\phi_t)dt
-\int_{0}^{T}(f_t^n,\phi_t)dt\\&-\int_{0}^{T}\int_D\langle g^n_t,\nabla \phi_t\rangle(x)dxdt+\int_{0}^{T}\int_{\partial D} h^n_t(x)\phi_t(x)d\sigma(x)dt.
\end{split}\end{equation*}
Taking limits on both sides of the above equation, we obtain
\begin{equation*}\begin{split}
(u_T,\phi_T)&-(u_0,\phi_0)-\int_0^T(u_t,\partial_t\phi_t)dt=\frac{1}{2}\int_{0}^{T}\mathcal{E}(u_t,\phi_t)dt
-\int_{0}^{T}(f_t,\phi_t)dt\\&-\int_{0}^{T}\int_D\langle g_t,\nabla \phi_t\rangle(x)dxdt+\int_{0}^{T}\int_{\partial D} h_t(x)\phi_t(x)d\sigma(x)dt.
\end{split}\end{equation*}
which means $u$ is the weak solution of PDE \eqref{PDE}.
\end{proof}
Uniqueness:  Suppose $u,\bar u\in L^2([0,T];H^1(D))$ are two solutions for PDE \eqref{PDE new}, we obtain
\begin{equation*}\begin{split}
&\|u_0-\bar u_0\|^2+\int_0^T e^{\theta s}\|\nabla(u_s-\bar u_s)\|^2ds\\
=&-\theta \int_0^T e^{\theta s}\|u_s-\bar u_s\|^2ds
+2\int_0^Te^{\theta s}(f(s,x,u_s,\nabla u_s)-f(s,x,\bar u_s,\nabla \bar u_s), u_s-\bar u_s)ds\\
&+2\int_0^T \int_D e^{\theta s}\langle g(s,x,u_s,\nabla u_s)-g(s,x,\bar u_s,\nabla \bar u_s),\nabla(u_s-\bar u_s)\rangle dxds\\
&+2\int_0^T \int_{\partial D}e^{\theta s}(h_s(x,u)-h_s(x,\bar u))(u_s-\bar u_s)d\sigma(x)ds.
\end{split}
\end{equation*}
By the same method in the proof of existence,  there is a positive constant $\rho<1$,  such that
$$
\|u-\bar u\|^2_{\theta, \delta}\leq \rho \|u-\bar u\|^2_{\theta, \delta},
$$
which implies $\|u-\bar u\|_{\theta, \delta}=0$. Hence $u=\bar u$.
\subsection{Probabilistic Method}

Let $m$ denote the Lebesgue measure on $D$ and set the pobability space $\Omega'=D\otimes \Omega$  and probability $P^m=m\otimes P$.
$\{X_t\}$ is the reflecting Brownian motion in domain  D
$$
X_t-X_s=B_t-B_s+\int_s^t \vec{n}(X_r)dL_r.
$$
It is known that, $\{X_t\}$ is a symmetric diffusion with initial distribution $m$.

By the symmetricalness, we know that
$$
\bar{B}(s,t)=2X_s-2X_t+B_t-B_s=B_s-B_t-2\int_s^t \vec{n}(X_r)dL_r,
$$
is a backward martingale under $P^m$ w.r.t. the backward filtration $\mathcal F'_s=\sigma\{X_r|r\in[s,\infty)\}$.

For $g=(g_1,\cdots,g_N): \mathbb{R}^N\rightarrow \mathbb{R}^N$, as in Section 3 we define the backward stochastic integral as follows
\begin{eqnarray}\label{backwardsibarB}
\int_s^t g_i(X_{r})d\bar{B}^i_t=(L^2-)\lim_{\delta\rightarrow 0}\sum_{j=0}^{n-1} g(X_{t_{j+1}})\bar{B}^i(t_j,t_{j+1}),
\end{eqnarray}
where the limit is over the partition $s=t_0<t_1<\cdots<t_n=t$ and $\delta=\max_j (t_{j+1}-t_j)$.

In this case, one has
\begin{eqnarray}\label{siofgwrtX}
\int_s^t g\ast dX_r=\int_s^t \langle g(X_r), dB_r\rangle+\int_s^t \langle g(X_r),d\bar{B}_r\rangle+2\int_s^t \langle g,\vec n\rangle(X_r)dL_r.
\end{eqnarray}

\begin{theorem}
Suppose \textbf{(H2)-(H6)} hold, then PDE \eqref{PDE new} has a unique  weak solution.
\end{theorem}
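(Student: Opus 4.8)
The plan is to push the Picard iteration \eqref{picard eq.} through at the level of the associated processes under the symmetric measure $P^m$, obtaining the same contraction as in the analytic subsection but by It\^o calculus rather than by integration by parts. First I would invoke Theorem \ref{theorem linear} to produce, for each $n$, the unique weak solution $u^{n+1}\in C([0,T];L^2(D))\cap L^2([0,T];H^1(D))$ of the linear problem \eqref{picard eq.} with frozen coefficients $g^n:=g(\cdot,u^n,\nabla u^n)$, $f^n:=f(\cdot,u^n,\nabla u^n)$, $h^n:=h(\cdot,u^n)$. Setting $Y^n_t:=u^n(t,X_t)$ and $Z^n_t:=\nabla u^n(t,X_t)$ and combining the probabilistic representation of Proposition \ref{decompositionofu} with the decomposition \eqref{siofgwrtX}, the pair $(Y^{n+1},Z^{n+1})$ satisfies the BSDE in which the boundary local-time part of $g^n\ast dX$ cancels the explicit $2\langle g^n,\vec n\rangle\,dL$ term, leaving
\begin{equation*}
dY^{n+1}_t=-f^n\,dt-h^n\,dL_t+\langle Z^{n+1}_t+g^n,dB_t\rangle+\langle g^n,d\bar B_t\rangle.
\end{equation*}

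The core of the argument is an energy estimate for the differences $\Delta Y:=Y^{n+1}-Y^n$, $\Delta Z:=Z^{n+1}-Z^n$, $\Delta g:=g^n-g^{n-1}$, and likewise $\Delta f,\Delta h$, whose dynamics read $d(\Delta Y_t)=-\Delta f\,dt-\Delta h\,dL_t+\langle\Delta Z+\Delta g,dB_t\rangle+\langle\Delta g,d\bar B_t\rangle$. I would apply the forward--backward It\^o formula of \cite{S} to $e^{\theta t}|\Delta Y_t|^2$ and take expectation under $P^m$. Since $\{X_t\}$ is symmetric with stationary one-dimensional marginal $m$, both stochastic integrals drop out in expectation; the bracket of the forward martingale enters with a plus sign and that of the backward martingale with a minus sign, and together with the $dL$ term they reorganize---after using stationarity to rewrite $E^m[|\Delta Z_r|^2]=\|\nabla(u^{n+1}_r-u^n_r)\|^2$ and the trace identity $E^m[\int_0^T\varphi(X_r)\,dL_r]=\int_0^T\!\int_{\partial D}\varphi\,d\sigma\,dr$ of Section 3 for the boundary term---into precisely the weighted identity already obtained analytically. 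Integrability needed to take $E^m$ is supplied by the linear growth and boundedness hypotheses (H2) together with the local-time moment bounds of Proposition \ref{boundLt} and Corollary \ref{bound.local time}.

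From there I would bound $|\Delta f|\le\alpha(|\Delta u|+|\nabla\Delta u|)$, $|\Delta h|\le\beta|\Delta u|$ on $\partial D$, and $|\Delta g|\le\gamma(|\Delta u|+|\nabla\Delta u|)$ by the Lipschitz conditions (H4), (H5), (H6), and---exactly as in the analytic proof---use $\beta<\|Tr\|^{-2}$ together with the smallness of $\gamma$ to split the coupling constants and choose $\theta$ so that
\begin{equation*}
\int_0^T e^{\theta s}\big(\|u^{n+1}_s-u^n_s\|^2+\|\nabla(u^{n+1}_s-u^n_s)\|^2\big)\,ds\le\rho\int_0^T e^{\theta s}\big(\|u^n_s-u^{n-1}_s\|^2+\|\nabla(u^n_s-u^{n-1}_s)\|^2\big)\,ds
\end{equation*}
with $\rho<1$. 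This makes $(u^n)_n$ Cauchy in the weighted norm on $L^2([0,T];H^1(D))$; its limit $u$ passes to the limit in the weak formulation of \eqref{picard eq.} and solves \eqref{PDE new}, and the identical estimate applied to two solutions forces uniqueness.

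The step I expect to be the main obstacle is the rigorous forward--backward It\^o expansion of $e^{\theta t}|\Delta Y_t|^2$: the backward integral $\int\langle\Delta g,d\bar B_r\rangle$ is not adapted to the forward filtration, so its quadratic variation and its cross-variation with the forward part must be computed inside the framework of \cite{S}, and it is exactly the symmetry of $X$ under $P^m$ (through $\bar B(s,t)=B_s-B_t-2\int_s^t\vec n(X_r)\,dL_r$) that makes the backward bracket appear with the correct sign so that the divergence cross term $\langle\Delta Z,\Delta g\rangle$ matches the analytic computation. Verifying that this expansion really reproduces the analytic energy identity, rather than one with a spurious sign that would destroy the contraction, is the delicate point.
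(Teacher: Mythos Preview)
Your plan is correct and would yield a valid proof, but it takes a different route from the paper's probabilistic argument. You apply the forward--backward It\^o formula to $e^{\theta t}|\Delta Y_t|^2$ with a purely time-dependent weight, then immediately pass to the PDE side via stationarity of $X$ under $P^m$ and the trace identity, so that after taking $E^m$ you recover exactly the analytic energy identity of Section~6.1 and finish with the same splitting of constants using $\beta<\|Tr\|^{-2}$. The paper instead keeps the whole computation at the process level by using the weight $e^{\lambda r+\mu L_r}$: the extra factor $e^{\mu L_r}$ produces a term $-\mu\int e^{\lambda r+\mu L_r}|\Delta M_r|^2\,dL_r$ which absorbs the boundary contribution directly, so the contraction is obtained in a norm involving $\int\cdot\,dL_r$ rather than via the trace operator, and the smallness used is $2\sqrt{2}\gamma<1$ rather than a condition coupling $\beta$ and $\|Tr\|$. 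The paper then has to do extra work to identify the process-level limit $(M,N)$ with a PDE solution---it sets $u_0(t,x)=E^x[M_t]$, $v_0(t,x)=E^x[N_t]$, solves the linear problem with coefficients frozen at $(u_0,v_0)$ via Theorem~\ref{theorem linear}, and shows the resulting $(v,\nabla v)$ coincides with $(u_0,v_0)$---whereas your route passes directly to the limit in the weak formulation. Your approach is shorter but is really the analytic proof in probabilistic clothing; the paper's version is more genuinely stochastic and avoids the trace theorem, at the cost of a roundabout identification of the limit. Your concern about the sign in the forward--backward bracket is well placed and resolves correctly: the net quadratic-variation contribution is $|\Delta Z+\Delta g|^2-|\Delta g|^2=|\Delta Z|^2+2\langle\Delta Z,\Delta g\rangle$, which is exactly the cross term appearing in the paper's expansion.
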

\begin{proof}
	Existence: 
Consider the Picard iteration sequence \eqref{picard eq.} and set $M_s^{n,t,x}=u^n(s,X^{t,x}_s)$ and $N_s^{n,t,x}=\nabla u^n(s,X^{t,x}_s)$. For simplicity, we denote $M^{n,t,x}_s,N^{n,t,x}_s, X^{t,x}_s $ as $M^n_s, N^n_s, X^x_s$ respectively.  It is known that $(M^n,N^n)$ satisfies the  following BSDE,
\begin{equation*}\begin{split}
M^n_s=&\Phi(X_T)-\int_s^T \langle N^n_r,dB_r\rangle+\int_s^T f(r,X_r,M^{n-1}_r,N^{n-1}_r)dr\\
&+\int_s^T h(r,X_r,M^{n-1}_r)dL_r+\int_s^T \langle g(r,X_r,M^{n-1}_r,N^{n-1}_r, N^{n-1}_r),dB_r+d\bar B_r\rangle.
\end{split}\end{equation*}
By Ito's formula (\cite{S}), it follows that
\begin{equation*}
\begin{split}
&e^{\lambda s+\mu L_s}|M^{n+1}_s-M^n_s|^2+\int_s^T e^{\lambda r+\mu L_r} |N^{n+1}_r-N^n_r|^2dr\\
=&-2\int_s^T e^{\lambda r+\mu L_r}(M^{n+1}_r-M^n_r)\langle N^{n+1}_r-N^n_r,dB_r\rangle\\
&+2\int_s^T e^{\lambda r+\mu L_r} (M^{n+1}_r-M^n_r)(f(r,X_r,M^{n}_r,N^{n}_r)-f(r,X_r,M^{n-1}_r,N^{n-1}_r))dr\\
&+2\int_s^T e^{\lambda r+\mu L_r} (M^{n+1}_r-M^n_r)(h(r,X_r,M^{n}_r)-h(r,X_r,M^{n-1}_r))dL_r\\
&+2\int_s^T  e^{\lambda r+\mu L_r}(M^{n+1}_r-M^n_r)\langle g(r,X_r,M^{n}_r,N^{n}_r)-g(r,X_r,M^{n-1}_r,N^{n-1}_r),dB_r+d\bar B_r\rangle\\
&+2\int_s^T e^{\lambda r+\mu L_r} \langle g(r,X_r,M^{n}_r,N^{n}_r)-g(r,X_r,M^{n-1}_r,N^{n-1}_r), N^{n+1}_r-N^n_r\rangle dr\\
&-\int_s^T  e^{\lambda r+\mu L_r} |M^{n+1}_r-M^n_r|^2(\lambda dr+\mu L_r).
\end{split}
\end{equation*}
By a standard calculation, we obtain
\begin{equation*}
\begin{split}
E_m\Big[&(\lambda-\epsilon_1)\int_s^Te^{\lambda r+\mu L_r} |M^{n+1}_r-M^n_r|^2dr
+(1-\epsilon_3)\int_s^Te^{\lambda r+\mu L_r} |N^{n+1}_r-N^n_r|^2dr\\
&+(\mu-\epsilon_2) \int_s^Te^{\lambda r+\mu L_r} |M^{n+1}_r-M^n_r|^2dL_r\Big]\\
\leq E_m\Big[&(\frac{\alpha^2}{\epsilon_1}+\frac{\gamma^2}{\epsilon_3})\int_s^Te^{\lambda r+\mu L_r} (|M^{n}_r-M^{n-1}_r|^2+|N^{n}_r-N^{n-1}_r|^2)dr\\
&+\frac{\beta^2}{\epsilon_3}\int_s^Te^{\lambda r+\mu L_r} |M^{n}_r-M^{n-1}_r|^2dL_r\Big]
\end{split}
\end{equation*}

Since $2\sqrt{2}\gamma<1$, we choose $\epsilon_1,\epsilon_3$ such that
$$
\frac{\alpha^2}{\epsilon_1}+\frac{\gamma^2}{\epsilon_3}<1-\epsilon_3,
$$
then chose $\lambda$ such that
$$
\lambda-\epsilon_1=1-\epsilon_3,
$$
finally chose $\mu$ such that
$$
\frac{\frac{\beta^2}{\epsilon_3}}{\frac{\alpha^2}{\epsilon_1}+\frac{\gamma^2}{\epsilon_3}}
=\frac{\mu-\epsilon_2}{1-\epsilon_3}.
$$
Let $\rho=\frac{\frac{\alpha^2}{\epsilon_1}+\frac{\gamma^2}{\epsilon_3}}{1-\epsilon_3}$ and $\delta=\frac{\mu-\epsilon_2}{1-\epsilon_3}$. We obtain
\begin{equation*}\begin{split}
&E_m\Big[\int_s^Te^{\lambda r+\mu L_r} \left((|M^{n+1}_r-M^n_r|^2+|N^{n+1}_r-N^n_r|^2)dr+\delta |M^{n+1}_r-M^n_r|^2dL_r\right)\Big]\\
\leq&\, \rho E_m\Big[\int_s^Te^{\lambda r+\mu L_r} \left((|M^{n}_r-M^{n-1}_r|^2+|N^{n}_r-N^{n-1}_r|^2)dr+\delta |M^{n}_r-M^{n-1}_r|^2dL_r\right)\Big]\\
\leq&\,\cdots\\
\leq&\, \rho^nE_m\Big[\int_s^Te^{\lambda r+\mu L_r} \left((|M^{1}_r|^2+|N^{1}_1|^2)dr+\delta |M^{1}_r|^2dL_r\right)\Big]\rightarrow 0,\ \ \ n\rightarrow\infty.
\end{split}\end{equation*}
Therefore, $(e^{\lambda \cdot+\mu L_\cdot}M^n, e^{\lambda \cdot+\mu L_\cdot}N^n)$ is a Cauchy sequence in $L^2([t,T]\times D)\otimes L^2([t,T]\times D)$ and the limit is denoted by $(\tilde M^n, \tilde N^n)$.

Set $$M_t=e^{-\lambda t-\mu L_t}\tilde M_t \quad \mbox{and} \quad N_t=e^{-\lambda t-\mu L_t}\tilde N_t\,.$$
It is easy to check that $(M,N)$ satisfies the following BSDE:
\begin{equation*}\begin{split}
	M_s=&\,\Phi(X_T)-\int_s^T \langle N_r,dB_r\rangle+\int_s^T f(r,X_r,M_r,N_r)dr\\
	&+\int_s^T h(r,X_r,M_r,N_r)dL_r+\int_s^T \langle g(r,X_r,M_r,N_r), dB_r+d\bar B_r\rangle.
\end{split}\end{equation*}
Set $u_0(t,x)=E^x[M_t]$, $v_0(t,x)=E^x[N_t]$ and then $u_0(t,\cdot),v_0(t,\cdot)\in L^2(D)$. By the Theorem \ref{theorem linear}, the following equation has unique solution $v\in C([0,T]; L^2(D))\cap L^2([0,T]; H^1(D))$,
\begin{equation}
\left\{ \begin{split}
&\partial_t v(t,x)+ \frac{1}{2}\Delta v(t,x)-divg(t,x,u_0,v_0)+f(t,x,u_0,v_0)=0, \ (t,x)\in [0,T]\times D,\\
&v(T,x)=\Phi(x), \quad x\in D,\\
&\frac{\partial v}{\partial\vec{n}}(t,x)- 2\langle g(t,x,u_0,v_0), \vec{n}(x)\rangle=h(t,x,u_0), \quad (t,x)\in [0,T]\times \partial D.
\end{split}
\right.
\end{equation}
Set $\tilde M_s=v(X_s)$ and $\tilde N_s=\nabla v(X_s)$, by Proposition \ref{decompositionofu}, $(\tilde M, \tilde N)$ solves the following BSDE
\begin{equation*}
\begin{split}
\tilde M_s=&\,\Phi(X_T)-\int_s^T \langle \tilde N_r,dB_r\rangle+\int_s^T f(r,X_r,M_r,N_r)dr\\
&+\int_s^T h(r,X_r,M_r,N_r)dL_r+\int_s^T \langle g(r,X_r,M_r,N_r),dB_r+d\bar B_r\rangle.
\end{split}
\end{equation*}
Since 
$$
M_s-\tilde M_s=-\int_s^T\langle N_r-\tilde N_r,dB_r\rangle,
$$
taking conditional expectation on both sides of the above equality, 
$$
M_s-\tilde M_s=-E\Big[\int_s^T\langle N_r-\tilde N_r,dB_r\rangle|\mathcal{F}_s\Big]=0.
$$
Furthermore, since
$$
|M_s-\tilde M_s|^2=-\int_s^T (M_r-\tilde M_r)\langle N_r-\tilde N_r,dB_r\rangle-\int_s^T |N_r-\tilde N_r|^2dr,
$$
we obtain $E\int_s^T |N_r-\tilde N_r|^2dr=0$ which deduces that $N_r=\tilde N_r$ for $r\in[t,T]$. 

Therefore, $v=u_0$ and $\nabla v=v_0$, which implies $v$ is a solution for nonlinear PDE \eqref{PDE}.

Uniqueness:  Suppose $u,v$ are two solutions for \eqref{PDE new}.  Set $Y_t=u(t,X_t), Z_t=\nabla u(t,X_t)$ and $\tilde Y_t=v(t,X_t), \tilde Z_t=\nabla v(t,X_t)$.  It follows that
\begin{equation*}
\begin{split}
&e^{\lambda s+\mu L_s}|Y_s-\tilde Y_s|^2+\int_s^T e^{\lambda r+\mu L_r} |Z_r-\tilde Z_r|^2dr\\
=&-2\int_s^T e^{\lambda r+\mu L_r}(Y_r-\tilde Y_r)\langle Z_r-\tilde Z_r, dB_r\rangle\\
&+2\int_s^T e^{\lambda r+\mu L_r} (Y_r-\tilde Y_r)(f(r,X_r, Y_r,Z_r)-f(r,X_r,\tilde Y_r,\tilde Z_r))dr\\
&+2\int_s^T e^{\lambda r+\mu L_r} (Y_r-\tilde Y_r)(h(r,X_r, Y_r)-h(r,X_r,\tilde Y_r))dL_r\\
&+2\int_s^T  e^{\lambda r+\mu L_r}(Y_r-\tilde Y_r)\langle g(r,X_r, Y_r,Z_r)-g(r,X_r,\tilde Y_r,\tilde Z_r), dB_r+d\bar B_r\rangle\\
&+2\int_s^T e^{\lambda r+\mu L_r} \langle g(r,X_r, Y_r,Z_r)-g(r,X_r,\tilde Y_r,\tilde Z_r),Z_r-\tilde Z_r \rangle dr\\
&-\int_s^T  e^{\lambda r+\mu L_r} |Y_s-\tilde Y_s|^2(\lambda dr+\mu L_r).
\end{split}
\end{equation*}
By the same calculation in the proof of existence, we find a positive number $\rho<1$, such that
\begin{eqnarray*}
	&&E_m\Big[\int_0^Te^{\lambda r+\mu L_r} \left((|Y_r-\tilde Y_r|^2+|Z_r-\tilde Z_r|^2)dr+\delta |Y_r-\tilde Y_r|^2dL_r\right)\Big]\\
	&\leq& \rho \ E_m\Big[\int_0^Te^{\lambda r+\mu L_r} \left((|Y_r-\tilde Y_r|^2+|Z_r-\tilde Z_r|^2)dr+\delta |Y_r-\tilde Y_r|^2dL_r\right)\Big],
\end{eqnarray*}
which implies that $Y_t=\tilde Y_t, Z_r=\tilde Z_r$. Hence $u=v$ and $\nabla u=\nabla v$.
\end{proof}

%
%
%
%

\end{document}